\documentclass[letterpaper,onefignum,onetabnum]{siamart250211}
\usepackage{cleveref}
\usepackage{lipsum}
\usepackage{amsfonts}
\usepackage{amsmath}
\usepackage{amssymb}
\usepackage{graphicx}
\usepackage{epstopdf}
\usepackage{algorithmicx}
\usepackage{leftindex}
\usepackage{mathtools}
\usepackage{dsfont}
\usepackage{physics}        
\usepackage{fixdif}         
\usepackage{bm}             
\usepackage[T1]{fontenc}    
\usepackage{microtype}
\usepackage{amsopn}

\ifpdf
\DeclareGraphicsExtensions{.eps,.pdf,.png,.jpg}
\else
\DeclareGraphicsExtensions{.eps}
\fi

\newcommand{\ml}{\mathcal}
\newcommand{\md}{\mathds}
\newcommand{\mht}{\mathscr}
\newcommand{\mgt}{\mathfrak}
\newcommand{\ssy}{\md{R}}
\newcommand{\ssyn}{{\md{R}^n}}
\newcommand{\ssynn}{{\md{R}^{n\times n}}}

\newcommand{\zzs}{\md{N}_+}
\newcommand{\zrs}{\md{N}}
\newcommand{\gl}{\mathcal{P}}
\newcommand{\dkh}[1]{\left\{#1\right\}}
\newcommand{\xkh}[1]{\left(#1\right)}
\newcommand{\zkh}[1]{\left[#1\right]}

\newcommand{\Xc}{\bm{X}}

\newcommand{\xc}{\bm{x}}
\newcommand{\yc}{\bm{y}}
\newcommand{\zc}{\bm{z}}

\newcommand{\bc}{\bm{b}}
\newcommand{\tc}{\bm{t}}
\newcommand{\wc}{\bm{w}}
\newcommand{\mc}{\bm{m}}
\newcommand{\jc}{{\bm{j}}}
\newcommand{\dc}{{\bm{d}}}

\newcommand{\st}{\kg\mathrm{s.t.}\kg}
\newcommand{\fs}[1]{\left\lVert #1\right\rVert }

\newcommand{\jdz}[1]{\left\lvert #1\right\rvert }
\newcommand{\kg}{\;\;\!}
\newcommand{\id}{\mathrm{Id}}
\newcommand{\gee}{\geqslant}
\newcommand{\lee}{\leqslant}
\newcommand{\muc}{{\boldsymbol{\mu}}}
\newcommand{\nuc}{{\boldsymbol{\nu}}}
\newcommand{\lambdac}{{\boldsymbol{\lambda}}}

\newcommand{\gammac}{{\boldsymbol{\gamma}}}

\newcommand{\etac}{{\boldsymbol{\eta}}}
\newcommand{\tauc}{{\boldsymbol{\tau}}}
\newcommand{\deltac}{{\boldsymbol{\delta}}}
\newcommand{\thetac}{{\boldsymbol{\theta}}}
\newcommand{\ffss}{\ssy_{\gee0}}
\newcommand{\zdz}{{\mathbb{S}}_{++}}
\newcommand{\psd}{{\mathbb{S}}_{\gee0}}

\newcommand{\ffdjz}{{\mathbb{D}}_{\gee0}}
\newcommand{\zdjz}{{\mathbb{D}}_{>0}}
\newcommand{\dwz}{\mathrm{I}}

\newcommand{\vphi}{\varphi}
\newcommand{\kchf}[3]{\leftindex_{1}F_{1}(#1;#2;#3)}
\newcommand{\qw}{\mathbb{E}}

\DeclareMathOperator{\diag}{diag}
\DeclareMathOperator{\MW}{MW_{2}}
\DeclareMathOperator{\MWp}{MW}
\DeclareMathOperator{\W}{W_{2}}
\DeclareMathOperator{\Wp}{W}
\DeclareMathOperator{\MI}{MI}

\DeclareMathOperator*{\argmax}{arg\,max}
\DeclareMathOperator*{\argmin}{arg\,min}

\DeclareMathOperator*{\arginf}{arg\,inf}
\DeclareMathOperator{\law}{law}
\DeclareMathOperator{\Id}{Id}
\DeclareMathOperator{\proj}{proj}
\DeclareMathOperator{\Bary}{Bary}
\DeclareMathOperator{\onehot}{one\_hot}
\DeclareMathOperator{\EM}{EM}
\DeclareMathOperator{\Prob}{Prob}

\renewcommand{\email}[1]{\href{mailto:#1}{\nolinkurl{#1}}}

\allowdisplaybreaks[4]

\newcommand{\dy}{\coloneqq}
\newcommand{\zz}{\mathrm{T}}

\let\oldforall\forall
\DeclareRobustCommand{\forall}{\oldforall\,}

\let\oldexists\exists
\DeclareRobustCommand{\exists}{\oldexists\,}

\usepackage{xstring}
\usepackage{hyperref}
\hypersetup{
	colorlinks=true,
	allcolors=siaminlinkcolor
}

\newcommand{\bibcustomurl}[1]{%
	\IfBeginWith{#1}{https://doi.org}{%
		\StrSubstitute{#1}{https://doi.org/}{doi:}[\tempdoi]%
		\href{#1}{\nolinkurl{\tempdoi}}%
	}{%
		\IfBeginWith{#1}{https://arxiv.org}{%
			\StrSubstitute{#1}{https://arxiv.org/abs/}{arXiv:}[\temparxiv]%
			\StrSubstitute{\temparxiv}{https://arxiv.org/}{arXiv:}[\temparxiv]%
			\href{#1}{\nolinkurl{\temparxiv}}%
		}{%
			available at \href{#1}{\nolinkurl{#1}}%
		}%
	}%
}

\usepackage{tabulary}
\usepackage{booktabs}

\newcommand{\posdelta}[1]{\textcolor{green!60!black}{($\Delta$ + #1)}}
\newcommand{\negdelta}[1]{\textcolor{red}{($\Delta$ - #1)}}

\usepackage{pgfplots}
\usepackage{pgfplotstable}
\pgfplotsset{compat=1.18}

\usepackage{mathrsfs}

\theoremstyle{plain}
\newtheorem{assumption}{Assumption}
\Crefmultiformat{assumption}{Assumptions~#2#1#3}{ and~#2#1#3}{, #2#1#3}{ and~#2#1#3}
\Crefmultiformat{proposition}{Proposition~#2#1#3}{ and~#2#1#3}{, #2#1#3}{ and~#2#1#3}

\makeatletter
\newcommand{\subalign}[1]{%
	\vcenter{%
		\Let@ \restore@math@cr \default@tag
		\baselineskip\fontdimen10 \scriptfont\tw@
		\advance\baselineskip\fontdimen12 \scriptfont\tw@
		\lineskip\thr@@\fontdimen8 \scriptfont\thr@@
		\lineskiplimit\lineskip
		\ialign{\hfil$\m@th\scriptstyle##$&$\m@th\scriptstyle{}##$\hfil\crcr
			#1\crcr
		}%
	}%
}
\makeatother

\usepackage[noend]{algpseudocode}

\newsiamremark{remark}{Remark}
\crefname{remark}{Remark}{Remark}
\newsiamremark{hypothesis}{Hypothesis}
\crefname{hypothesis}{Hypothesis}{Hypotheses}
\newsiamthm{claim}{Claim}
\newsiamremark{fact}{Fact}
\crefname{fact}{Fact}{Facts}

\headers{LSMM-OTDA}{Songyan Luo and Yunxin Zhang}

\title{Minkowski-Type Wasserstein Metrics and Barycenters for Location-Scale Mixtures with Application to Domain Adaptation
}

\author{
	Songyan Luo
	\thanks{School of Mathematical Sciences, Fudan University, Shanghai 200433, China. (\email{syluo25@m.fudan.edu.cn}).}
	\and Yunxin Zhang
	\thanks{School of Mathematical Sciences, Shanghai Key Laboratory for Contemporary Applied Mathematics, Laboratory of Mathematics for Nonlinear Science, Fudan University, Shanghai 200433, China. (\email{xyz@fudan.edu.cn}).}
}

\begin{document}

\maketitle

% REQUIRED
\begin{abstract}
Discrete optimal transport (OT) typically relies on pointwise matching between empirical measures, incurring computational costs that scale at least quadratically with the sample size. To circumvent this limitation, we introduce a mathematical framework for OT between finite location-scale mixture models. By defining a specific function class grounded in generalized Minkowski inequalities and characterizing OT maps between multivariate location-scale families, we extend Wasserstein-type metrics and barycenters to these mixture models under the assumption of identifiability. Furthermore, we prove that restricting joint couplings to a specific mixture structure reduces the continuous multimarginal OT problem to a discrete transport problem over mixture components. Computing transport plans between these components rather than individual samples reduces the computational complexity to linear scaling with respect to the sample size. Empirical evaluations on the VisDA-C benchmark confirm that this strategy achieves competitive accuracy compared to existing empirical OT approaches, while substantially reducing the computational cost and memory footprint.
\end{abstract}

% REQUIRED
\begin{keywords}
optimal transport, Wasserstein distance, Wasserstein barycenter, location-scale mixture models, Gaussian mixture models, domain adaptation
\end{keywords}

% REQUIRED
\begin{MSCcodes}
49Q22, 60E05, 62H30, 68T09
\end{MSCcodes}

\section{Introduction}

Despite the widespread success of machine learning (ML) across various disciplines, traditional ML methodologies still exhibit inherent limitations when deployed in complex real-world scenarios. Achieving robust and reliable performance typically necessitates massive amounts of annotated training data, which incurs substantial computational and financial costs. While the standard independent and identically distributed (i.i.d.) assumption simplifies model training and theoretical analysis, the pervasive issue of domain shift~\cite{joaquinDatasetShiftMachine2008,ben-davidTheoryLearningDifferent2010} often degrades the generalization and robustness of models applied across disparate data distributions. To mitigate this issue, various transfer learning paradigms, ranging from domain adaptation (DA)~\cite{ganinDomainadversarialTrainingNeural2016} to domain generalization~\cite{liDeeperBroaderArtier2017}, have been proposed. Conventional DA focuses on transferring knowledge from a single labeled source domain to an unlabeled target domain. However, practical applications frequently aggregate labeled data from multiple heterogeneous sources, motivating the more complex paradigm of multi-source domain adaptation (MSDA)~\cite{pengMomentMatchingMultiSource2019}. Driven by this broader context, this paper establishes a theoretical framework whose underpinnings accommodate both two-marginal settings (i.e., standard DA between a single source and a target) and multimarginal settings (i.e., MSDA). Nevertheless, recognizing the distinct algorithmic complexities inherent in multi-source integration, our practical algorithmic design and empirical evaluations in this work are strictly focused on the foundational two-marginal DA scenario.

To address the distribution shift inherent in DA, optimal transport (OT) has emerged as a principled mathematical framework~\cite{courtyOptimalTransportDomain2017}. By defining robust metrics between probability distributions\,---\,most notably the Wasserstein and Gromov-Wasserstein distances~\cite{villaniOptimalTransport2009}\,---\,OT provides a geometrically meaningful way to align source and target domains. Compared to traditional information-theoretic measures (e.g., Kullback-Leibler divergence), OT-based metrics are particularly effective in capturing the underlying geometry of the data space. Recent methodologies have further expanded the theoretical and practical efficacy of OT. For instance, in the space of Gaussian mixture models (GMMs), Delon and Desolneux~\cite{delonWassersteinTypeDistanceSpace2020} introduced a Wasserstein-type distance that restricts the set of couplings, significantly alleviating computational burdens. Furthermore, in the context of MSDA, Wasserstein barycenters have been successfully employed to aggregate knowledge from multiple heterogeneous sources~\cite{montesumaWassersteinBarycenterMultisource2021}, highlighting the practical utility of barycentric formulations in transfer learning.

Despite its theoretical elegance, discrete OT typically relies on pointwise matching between empirical measures, incurring computational costs that scale at least quadratically with the sample size~\cite{peyreComputationalOptimalTransport2019}. This computational bottleneck restricts the applicability of empirical OT to large-scale datasets. To circumvent this limitation, representing data distributions via parametric or semi-parametric mixture models offers a viable alternative. A classical and widely adopted choice is the GMM~\cite{mclachlanFiniteMixtureModels2000}, whose origins trace back to Pearson. In this work, we theoretically consider a broader and unified class: the finite location-scale mixture model (LSMM)~\cite{fruhwirth-schnatterFiniteMixtureMarkov2006}. Serving as universal approximators for probability densities, LSMMs can capture complex, multimodal data distributions through a combination of simpler parametric components. This paradigm shift substantially reduces the representation complexity from the number of individual samples to the number of mixture components. While our theoretical framework is established for finite LSMMs, we specifically leverage GMMs in our algorithmic implementation and empirical evaluations to exploit their computational tractability.

The standard $p$-Wasserstein distance (associated with the cost function $c(\xc,\yc) = \|\xc-\yc\|^p$ for $p \gee 1$) serves as a cornerstone in OT, providing a profound geometric tool for comparing probability measures. Fundamentally, the metric properties of the $p$-Wasserstein distance are intimately connected to the classical Minkowski inequality. Recognizing that the Minkowski inequality can be generalized to accommodate a much broader class of functions~\cite{hardyInequalities1964}, a significant mathematical opportunity arises: establishing a generalized Wasserstein-type metric and its corresponding barycenter theory over finite LSMMs for this broader function class. In this article, we address this challenge by proposing a comprehensive mathematical framework. By defining a specific function class grounded in generalized Minkowski-type inequalities and characterizing the OT maps between multivariate location-scale families, we extend Wasserstein-type metrics to identifiable finite LSMMs. Then, we prove that by restricting joint couplings to a specific mixture structure, the continuous multimarginal OT problem reduces to a tractable discrete transport problem over mixture components.

The main contributions of this work are summarized as follows:
\begin{enumerate}
    \item \textbf{Theoretical Formulation of Metrics and Barycenters}: Grounded in generalized Minkowski-type inequalities, we introduce a specific function class to construct extended Wasserstein spaces, defining Wasserstein-type metrics and barycenters for identifiable finite LSMMs.
    \item \textbf{Continuous-to-Discrete Reduction and Bounding}: We prove that by restricting joint couplings to specific mixture structures, both two-marginal and multimarginal continuous OT problems between finite LSMMs can be reduced to tractable discrete OT problems. Furthermore, we establish inequalities bounding these discrete transport costs against their original continuous counterparts.
    \item \textbf{Characterization of Costs for Affine Optimal Maps}: We provide a complete characterization of translation-invariant cost functions that admit affine OT maps. Specifically, we prove that the optimality of symmetric positive definite affine maps requires the cost function to be separable, and that the presence of any cross-coupling further reduces the cost to a quadratic form.
    \item \textbf{Analytical and Closed-Form Formulas}: For general multivariate location-scale families, we derive analytical expressions for the generalized transport costs. When instantiating the framework with GMMs and specific functions from our proposed class, we provide closed-form formulas, expanding the analytical tools available for parametric OT.
    \item \textbf{Algorithmic Efficiency in DA}: Building upon these theoretical foundations, we propose a scalable domain adaptation framework instantiated via GMMs. By computing transport plans between mixture components rather than individual samples, the computational complexity is reduced to linear scaling with respect to the sample size. Empirical evaluations demonstrate significant reductions in computational cost and memory footprint while maintaining competitive accuracy compared to existing empirical approaches.
\end{enumerate}

The remainder of this article is organized as follows. Section 2 introduces preliminary concepts of Wasserstein spaces, barycenters, and OT for DA. Section 3 defines the specific function class and formulates the generalized metrics for finite mixture models. Section 4 establishes the reduction of continuous multimarginal OT problems to discrete OT problems. Section 5 characterizes translation-invariant cost functions admitting affine optimal transport maps and provides analytical or closed-form derivations for multivariate location-scale families. Section 6 details the algorithmic implementation of the proposed framework for domain adaptation and presents the experimental results. Finally, Section 7 concludes the paper.

\section{Preliminaries}\label{sec:Pre}
In this section, we will introduce several concepts related to the Wasserstein metrics and barycenters, which will be important for the following sections of this article. 
For further references, see~\cite{villaniOptimalTransport2009}.
Let $(\ml{X},d)$ be a Polish metric space. For a fixed Borel set $\Omega\subseteq\ml{X}$, we denote $\gl(\Omega)$ the collection of all probability measures on $\Omega$.

\subsection{Wasserstein spaces, metrics and barycenters}
For $p\gee1$, we denote by $\gl^d_p(\Omega)$ the set of probability measures on $\Omega$ such that 
\begin{equation*}
    \gl^d_p(\Omega)\dy\dkh{\muc\in\gl(\Omega):\exists\xc_0\in\Omega,\st\int_\Omega d(\xc,\xc_0)^p\d\muc(\xc)<+\infty},
\end{equation*}
where $\xc_0\in\Omega$ is arbitrary. By~\cite[Definition 6.4]{villaniOptimalTransport2009}, this space does not depend on the choice of the point $\xc_0$, and the Wasserstein metric $\Wp^d_p$ between $\muc_0,\muc_1\in\gl^d_p(\Omega)$ is defined by the formula
\begin{equation}\label{eq:Wp}
    \Wp^d_p(\muc_0,\muc_1)\dy\inf_{\gammac\in\prod(\muc_0,\muc_1)}\zkh{\int_{\Omega\times\Omega}d(\xc_0,\xc_1)^p\d\gammac(\xc_0,\xc_1)}^{1/p},
\end{equation}
where $\prod(\muc_0,\muc_1)$ denotes the set of joint probability distributions on $\Omega\times\Omega$ with marginals $\muc_0$ and $\muc_1$, also called the set of transport plans between $\muc_0$ and $\muc_1$. Now we denote the law of a random variable $\Xc$ by $\law(\Xc)$, the general lemma below guarantees the existence of a minimizer in~\eqref{eq:Wp}.
\begin{lemma}[Existence of an optimal coupling]\label{thm:existence of infimum}
    Given an integer $K\gee2$. For all $k\in\{0,\dots,K-1\}$, let $(\ml{X}_k,\muc_k)$ be a Polish probability space, and let $u_k:\ml{X}_k\to\ssy\cup\{-\infty\}$ be a upper semicontinuous function such that $u_k\in L^1(\muc_k)$. Let $c:\prod_{k=0}^{K-1}\ml{X}_k\to\ssy\cup\{+\infty\}$ be a lower semicontinuous cost function, such that $c(\xc_0,\dots,\xc_{K-1})\gee\sum_{k=0}^{K-1} u_k(\xc_k)$ for all $\xc_k\in\ml{X}_k,k\in\{0,\dots,K-1\}$. For any distribution $\gammac$ on $\prod_{k=0}^{K-1}\ml{X}_k$, let us define the cost of $\gammac$ with respect to $c$ by the formula
    \begin{equation*}
        C_c(\gammac)\dy\int_{\prod_{k=0}^{K-1}\ml{X}_k} c(\xc_0,\dots,\xc_{K-1})\d \gammac(\xc_0,\dots,\xc_{K-1}).
    \end{equation*}
    Let $\prod(\muc_0,\dots,\muc_{K-1})$ denotes the set of joint probability distributions on $\prod_{k=0}^{K-1}\ml{X}_k$ with marginals $(\muc_0,\dots,\muc_{K-1})$, and let $\ml{J}$ be a closed subset of $\prod(\muc_0,\dots,\muc_{K-1})$. Assume that the optimal cost of $(\muc_0,\dots,\muc_{K-1})$ with respect to $c$ and $\ml{J}$ is finite, i.e.
    \begin{equation*}
        C_c(\muc_0,\dots,\muc_{K-1};\ml{J})\dy\inf_{\gammac\in\ml{J}}C_c(\gammac)<+\infty.
    \end{equation*}
    Then there exists a random variable $\Xc_k$ on $\ml{X}_k$ for each $k\in\{0,\dots,K-1\}$, such that $\law(\Xc_k)=\muc_k$ and
    \begin{equation*}
        \law(\Xc_0,\dots,\Xc_{K-1})\in\arginf_{\gammac\in\ml{J}}C_c(\gammac).
    \end{equation*}
    Moreover, if $\ml{J}=\prod(\muc_0,\dots,\muc_{K-1})$, such joint probability distributions achieving the infimum are called optimal transport plans, and the couple $(\Xc_0,\dots,\Xc_{K-1})$ is called an optimal coupling of $(\muc_0,\dots,\muc_{K-1})$ with respect to $c$.
\end{lemma}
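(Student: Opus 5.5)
The plan is the direct method of the calculus of variations on $\ml{J}$ with the weak topology. The steps are tightness and compactness, lower semicontinuity of $C_c$, extraction of a minimizer, and realization of that minimizer by random variables.

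\textbf{Compactness.} Each $\muc_k$ is a Borel probability measure on a Polish space, so it is tight by Ulam's theorem. For $\varepsilon>0$ pick compact sets $K_k\subseteq\ml{X}_k$ with $\muc_k(\ml{X}_k\setminus K_k)<\varepsilon/K$. Then every $\gammac\in\prod(\muc_0,\dots,\muc_{K-1})$ satisfies
\begin{equation*}
    \gammac\Bigl(\textstyle\prod_k\ml{X}_k\setminus\prod_k K_k\Bigr)\lee\sum_k\muc_k(\ml{X}_k\setminus K_k)<\varepsilon,
\end{equation*}
so the set of couplings is tight. It is also weakly closed, because the marginal maps are weakly continuous. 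By Prokhorov's theorem $\prod(\muc_0,\dots,\muc_{K-1})$ is therefore weakly compact, and since $\ml{J}$ is closed in it, $\ml{J}$ is compact as well.

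\textbf{Lower semicontinuity.} This is the main obstacle, because $c$ is only bounded below by $\sum_k u_k(\xc_k)$ and not by a constant. Set $u(\xc_0,\dots,\xc_{K-1})\dy\sum_k u_k(\xc_k)$ and $\tilde c\dy c-u\gee0$. The function $\tilde c$ is lower semicontinuous, since $-u$ is lower semicontinuous, and it is well defined with values in $[0,+\infty]$ wherever $u>-\infty$; where $u=-\infty$, we set $\tilde c=+\infty$. Because each $u_k\in L^1(\muc_k)$, the integral
\begin{equation*}
    \int u\d\gammac=\sum_k\int u_k\d\muc_k
\end{equation*}
is finite and the same for every $\gammac\in\prod(\muc_0,\dots,\muc_{K-1})$. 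This gives $C_c(\gammac)=\int\tilde c\d\gammac+\text{const}$.

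For a nonnegative lower semicontinuous $\tilde c$ on a metric space, write $\tilde c=\sup_n \tilde c_n$ with $\tilde c_n$ bounded, Lipschitz and nondecreasing in $n$, using the inf-convolutions $\tilde c_n(z)=\min\{n,\inf_w[\tilde c(w)+n\,d(z,w)]\}$. If $\gammac^j\rightharpoonup\gammac$, then
\begin{equation*}
    \int\tilde c_n\d\gammac=\lim_j\int\tilde c_n\d\gammac^j\lee\liminf_j\int\tilde c\d\gammac^j,
\end{equation*}
and monotone convergence in $n$ yields lower semicontinuity of $\gammac\mapsto\int\tilde c\d\gammac$, hence of $C_c$ on $\ml{J}$.

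\textbf{Minimizer.} Take a minimizing sequence in $\ml{J}$. The infimum is finite by assumption, and it is $>-\infty$ since $C_c\gee\sum_k\int u_k\d\muc_k$. By compactness a subsequence converges weakly to some $\gammac^*\in\ml{J}$, and by lower semicontinuity $C_c(\gammac^*)$ equals the infimum.

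\textbf{Random variables.} Take the probability space $\bigl(\prod_k\ml{X}_k,\gammac^*\bigr)$ and let $\Xc_k$ be the coordinate projections. Then $\law(\Xc_k)=\muc_k$ and $\law(\Xc_0,\dots,\Xc_{K-1})=\gammac^*$. The final naming convention for the case $\ml{J}=\prod(\muc_0,\dots,\muc_{K-1})$ is a definition and needs no proof. This is essentially \cite[Theorem 4.1]{villaniOptimalTransport2009} extended to $K$ marginals and a closed constraint set $\ml{J}$.
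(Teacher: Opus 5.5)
Your proof is correct and follows essentially the same route as the paper. The couplings are tight via the marginal compacts, closedness plus Prokhorov make $\mathcal{J}$ compact, $C_c$ is lower semicontinuous after subtracting $\sum_k u_k(x_k)$ (whose integral is the same finite constant $\sum_k\int u_k\,\mathrm{d}\mu_k$ for every coupling), and a minimizer is extracted from a minimizing sequence. Your truncated inf-convolution approximants, being bounded and continuous, also supply the detail needed to pass to the weak limit in the monotone-approximation step, which the paper's auxiliary lower-semicontinuity lemma only sketches.
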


Although~\cref{thm:existence of infimum} is fundamental, a complete proof seems to be lacking in the literature. For completeness, we provide its proof below, which is built upon two auxiliary lemmas established below.

\begin{lemma}[Lower semicontinuity of the cost functional]\label{lem:Lower semicontinuity of the cost functional}
    Given an integer $K\gee2$. For all $k\in\{0,\dots,K-1\}$, let $\ml{X}_k$ be a Polish space, and let $c:\prod_{k=0}^{K-1}\ml{X}_k\to\ssy\cup\{+\infty\}$ be a lower semicontinuous cost function. Let $h:\prod_{k=0}^{K-1}\ml{X}_k\to\ssy\cup\{-\infty\}$ be an upper semicontinuous function such that $c\gee h$. Let $(\pi_\ell)_{\ell\in\zrs}$ be a sequence of probability measures on $\prod_{k=0}^{K-1}\ml{X}_k$, converging weakly to some $\pi\in\gl(\prod_{k=0}^{K-1}\ml{X}_k)$, in such a way that $h\in L^1(\pi_\ell)\cap L^1(\pi)$, and
    \begin{equation*}
        \int_{\prod_{k=0}^{K-1}\ml{X}_k}h\d\pi_\ell\xrightarrow[\ell\to+\infty]{}\int_{\prod_{k=0}^{K-1}\ml{X}_k}h\d\pi.
    \end{equation*}
    Then 
    \begin{equation*}
        \int_{\prod_{k=0}^{K-1}\ml{X}_k}c\d\pi\lee\liminf_{\ell\to+\infty}\int_{\prod_{k=0}^{K-1}\ml{X}_k}c\d\pi_\ell.
    \end{equation*}
    In particular, if $c$ is nonnegative, then $F:\pi\mapsto\int c\d\pi$ is lower semicontinuous on $\gl(\prod_{k=0}^{K-1}\ml{X}_k)$, equipped with the topology of weak convergence.
\end{lemma}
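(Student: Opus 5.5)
The plan is to reduce to the case of a nonnegative lower semicontinuous integrand and then use the monotone approximation of such functions by bounded continuous ones. Write $\ml{Y}\dy\prod_{k=0}^{K-1}\ml{X}_k$, which is Polish. Since $c\gee h$ and $h$ is upper semicontinuous, the function $\tilde c\dy c-h$ is well defined with values in $[0,+\infty]$. On the set where $h=-\infty$ we set $\tilde c=+\infty$; this set is $\pi$-null because $h\in L^1(\pi)$. The function $\tilde c$ is lower semicontinuous, being the sum of the lower semicontinuous functions $c$ and $-h$, neither of which takes the value $-\infty$. We then split
\begin{equation*}
    \int_{\ml{Y}} c\d\pi_\ell=\int_{\ml{Y}}\tilde c\d\pi_\ell+\int_{\ml{Y}} h\d\pi_\ell,
\end{equation*}
which is legitimate because $h\in L^1(\pi_\ell)$. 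The same splitting holds for $\pi$. The second term converges to $\int h\d\pi$ by hypothesis. It therefore suffices to prove $\int\tilde c\d\pi\lee\liminf_\ell\int\tilde c\d\pi_\ell$, that is, the final ``in particular'' claim for nonnegative lower semicontinuous integrands. The main statement follows by adding the two limits.

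For the nonnegative case we use the standard fact that on a metric space every lower semicontinuous $f:\ml{Y}\to[0,+\infty]$ is the pointwise nondecreasing limit of bounded Lipschitz functions. Concretely, set $f_m(y)\dy\inf_{z}\{\min(f(z),m)+m\,d(y,z)\}$ for a metric $d$ compatible with the Polish topology. Each $f_m$ is $m$-Lipschitz, satisfies $0\lee f_m\lee\min(f,m)$, and is nondecreasing in $m$. Lower semicontinuity gives $f_m\uparrow f$. This convergence is the step that needs the most care. Fix $y$ and $t<f(y)$. By lower semicontinuity, $f>t$ on a ball $B(y,r)$. Take $m>t$ large enough that $m r>t$. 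Then any $z$ outside the ball contributes more than $t$ through the term $m\,d(y,z)$. Any $z$ inside the ball contributes at least $\min(f(z),m)>t$. Hence $f_m(y)\gee t$.

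Since each $f_m$ is bounded and continuous, weak convergence gives $\int f_m\d\pi_\ell\to\int f_m\d\pi$. Combined with $f_m\lee f$, this yields, for every $m$,
\begin{equation*}
    \int f_m\d\pi=\lim_{\ell}\int f_m\d\pi_\ell\lee\liminf_{\ell}\int f\d\pi_\ell.
\end{equation*}
Letting $m\to\infty$ and applying monotone convergence gives $\int f\d\pi\lee\liminf_\ell\int f\d\pi_\ell$. Applying this with $f=\tilde c$ completes the main statement.

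Finally, the ``in particular'' assertion is exactly the nonnegative case with $h\equiv0$, whose hypotheses are trivially satisfied. To pass from sequential lower semicontinuity to lower semicontinuity of $F$, we note that the weak topology on $\gl(\ml{Y})$ is metrizable, for instance by the Lévy–Prokhorov metric, because $\ml{Y}$ is Polish. On a metrizable space the sequential and topological notions of lower semicontinuity coincide.
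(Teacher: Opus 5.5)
Your proposal is correct and follows the same route as the paper, which adapts Villani's Lemma 4.3: subtract $h$ to reduce to a nonnegative lower semicontinuous integrand, approximate it from below by a nondecreasing sequence of continuous functions, pass to the limit in $\ell$ using weak convergence, and conclude by monotone convergence. Your version also makes explicit several points the paper leaves implicit: the inf-convolution construction of bounded Lipschitz approximants (boundedness is what allows weak convergence to be applied), why the splitting $c=(c-h)+h$ is legitimate under the integrals, and the metrizability of the weak topology needed to pass from sequential to topological lower semicontinuity in the final claim.
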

\begin{proof}
    The proof is analogous to that of~\cite[Lemma 4.3]{villaniOptimalTransport2009}. Replacing $c$ by $c-h$, we may assume that $c$ is a nonnegative lower semicontinuous function. Then $c$ can be written as the pointwise limit of a nondecreasing family $(c_{\ell'})_{\ell'\in\zrs}$ of continuous real-valued functions. By monotone convergence,
    \begin{align*}
        \int_{\prod_{k=0}^{K-1}\ml{X}_k}c\d\pi
        &=\lim_{\ell'\to+\infty}\int_{\prod_{k=0}^{K-1}\ml{X}_k}c_{\ell'}\d\pi \\
        &=\lim_{\ell'\to+\infty}\lim_{\ell\to+\infty}\int_{\prod_{k=0}^{K-1}\ml{X}_k}c_{\ell'}\d\pi_\ell
        \lee\liminf_{\ell\to+\infty}\int_{\prod_{k=0}^{K-1}\ml{X}_k}c\d\pi_\ell.
    \end{align*}
\end{proof}

\begin{lemma}[Tightness of transference plans]\label{lem:Tightness of transference plans}
    Given an integer $K\gee2$. For all $k\in\{0,\dots,K-1\}$, let $\ml{X}_k$ be a Polish space, and let $\ml{T}_k$ be a tight subset of $\gl(\ml{X}_k)$. Then the set $\prod_{k=0}^{K-1}\ml{T}_k$ of all transference plans with marginals $(\ml{T}_0,\dots,\ml{T}_{K-1})$, is itself tight in $\gl(\prod_{k=0}^{K-1}\ml{X}_k)$.
\end{lemma}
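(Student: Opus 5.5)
The plan is to adapt the two-marginal argument of~\cite[Lemma 4.4]{villaniOptimalTransport2009} to $K$ marginals. The key observation is that tightness of a family of measures on a product space can be verified factor by factor. The set in question consists of all $\pi\in\gl(\prod_{k=0}^{K-1}\ml{X}_k)$ whose $k$-th marginal lies in $\ml{T}_k$ for every $k$. It suffices to show that for every $\varepsilon>0$ there is a compact set $K_\varepsilon\subseteq\prod_{k=0}^{K-1}\ml{X}_k$ such that $\pi\xkh{K_\varepsilon^c}\lee\varepsilon$ for all such $\pi$.

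First, fix $\varepsilon>0$ and use the tightness of each $\ml{T}_k$. This gives a compact set $\ml{K}_k\subseteq\ml{X}_k$ with $\muc_k\xkh{\ml{X}_k\setminus\ml{K}_k}\lee\varepsilon/K$ for all $\muc_k\in\ml{T}_k$. Set $K_\varepsilon\dy\prod_{k=0}^{K-1}\ml{K}_k$. This set is compact by Tychonoff's theorem; since the product is finite, sequential compactness of a finite product of compact metric spaces suffices.

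Second, estimate the complement. It decomposes as
\begin{equation*}
K_\varepsilon^c=\bigcup_{k=0}^{K-1}\dkh{(\xc_0,\dots,\xc_{K-1}):\xc_k\notin\ml{K}_k}.
\end{equation*}
Let $\pi$ be a plan with marginals $\muc_k\in\ml{T}_k$. Applying the union bound and the marginal condition gives
\begin{equation*}
\pi\xkh{K_\varepsilon^c}\lee\sum_{k=0}^{K-1}\pi\xkh{\dkh{\xc_k\notin\ml{K}_k}}=\sum_{k=0}^{K-1}\muc_k\xkh{\ml{X}_k\setminus\ml{K}_k}\lee\varepsilon.
\end{equation*}
This holds uniformly over all such $\pi$, which proves tightness.

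There is no real obstacle here. The only points needing care are interpreting the notation $\prod_{k}\ml{T}_k$ as the set of couplings whose marginals lie in the $\ml{T}_k$, and noting that the product space is Polish, so tightness is the relevant notion for the subsequent Prokhorov argument used in the proof of~\cref{thm:existence of infimum}.
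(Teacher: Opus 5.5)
Your proof is correct and follows essentially the same argument as the paper. Both choose uniform compact sets for each $\ml{T}_k$, take their product, and bound the mass of its complement by a union bound over the marginals; the only cosmetic difference is that you allot $\varepsilon/K$ per factor to get a total bound of $\varepsilon$, whereas the paper allots $\varepsilon$ per factor and obtains $K\varepsilon$.
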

\begin{proof}
    The proof of this multivariate lemma proceeds similarly to the bivariate case in~\cite[Lemma 4.4]{villaniOptimalTransport2009}. For all $k\in\{0,\dots,K-1\}$, let $\muc_k\in\ml{T}_k$, and let $\pi\in\Pi(\muc_0,\dots,\muc_{K-1})$. For any $\varepsilon>0$, there is a compact set $K_{\varepsilon,k}\subseteq\ml{X}_k$, independent of the choice of $\muc_k$ in $\ml{T}_k$, such that $\muc_k[\ml{X}_k\backslash K_{\varepsilon,k}]<\varepsilon$. Then for any coupling $(X_0,\dots,X_{K-1})$ of $(\muc_0,\dots,\muc_{K-1})$,
    \begin{equation*}
        \Prob[(X_0,\dots,X_{K-1})\notin K_{\varepsilon,0}\times\dots\times K_{\varepsilon,K-1}]\lee\sum_{k=0}^{K-1}\Prob[X_k\notin K_{\varepsilon,k}]<K\varepsilon.
    \end{equation*}
    Moreover, since $\prod_{k=0}^{K-1}K_{\varepsilon,k}$ is compact in $\prod_{k=0}^{K-1}\ml{X}_k$, the proof is now complete.
\end{proof}

\begin{proof}[Proof of~\cref{thm:existence of infimum}]
    Our proof extends the method of~\cite[Theorem 4.1]{villaniOptimalTransport2009} to our generalized setting. For all $k\in\{0,\dots,K-1\}$, by Prokhorov's theorem~\cite{prokhorovConvergenceRandomProcesses1956}, since $\ml{X}_k$ is Polish, $\{\muc_k\}$ is tight in $\gl(\ml{X}_k)$. By~\cref{lem:Tightness of transference plans} and the definition of tightness, $\ml{J}$ is tight in $\gl(\prod_{k=0}^{K-1}\ml{X}_k)$, and by Prokhorov’s theorem this set has a compact closure. By passing to the limit in the equation for marginals, we see that $\prod(\muc_0,\dots,\muc_{K-1})$ is closed, so $\ml{J}$ is in fact compact.

    Then let $(\pi_\ell)_{\ell\in\zrs}$ be a sequence of probability measures on $\prod_{k=0}^{K-1}\ml{X}_k$, such that $\int c\d \pi_\ell$ converges to the infimum transport cost. Extracting a subsequence if necessary, we may assume that $\pi_\ell$ converges to some $\pi\in\prod(\muc_0,\dots,\muc_{K-1})$. The function $h:(\xc_0,\dots,\xc_{K-1})\mapsto\sum_{k=0}^{K-1} u_k(\xc_k)$ lies in $L^1(\pi_\ell)\cap L^1(\pi)$, so~\cref{lem:Lower semicontinuity of the cost functional} implies
    \begin{equation*}
        \int_{\prod_{k=0}^{K-1}\ml{X}_k}c\d\pi\lee\liminf_{\ell\to+\infty}\int_{\prod_{k=0}^{K-1}\ml{X}_k}c\d\pi_\ell.
    \end{equation*}
    Thus $\pi$ is minimizing, this yields the desired result.
\end{proof}

Moreover, from~\cite[Theorem 10.28]{villaniOptimalTransport2009}, there is a unique (in law) optimal coupling $(\Xc_0,\Xc_1)$ achieving the infimum in~\eqref{eq:Wp} on some assumptions, and there exists a measurable function $T:\Omega\to\Omega$ such that $\Xc_1 = T(\Xc_0)$. Besides, the function $T$ is called an optimal transport map from $\muc_0$ to $\muc_1$ and satisfying $T_\#\muc_0=\muc_1$. Here, we denote by $T_\#\muc_0$ the push-forward measure of $\muc_0$ by $T$, that is a Borel measure on $\Omega$, defined by
\begin{equation*}
    (T_\#\muc_0)[A]=\muc_0[T^{-1}(A)],\quad\forall A\subset\Omega.
\end{equation*}
Also, the optimal transport plan $\gammac^\ast$ has the following form
\begin{equation*}
    \gammac^\ast=(\id,T)_\#\muc_0.
\end{equation*}

Throughout this article, for the important particular case where the metric $d$ is the Euclidean distance $\fs{\cdot}_2$, we always drop the superscript $^d$ in the notation for the ease of readability, such as $\Wp_p\dy\Wp^{\fs{\cdot}_2}_p$. In this case, let $n\in\zzs$, and let $\ml{X}=\ssyn$, if $\muc_0$ is absolutely continuous with respect to the Lebesgue measure $\mathcal{L}^n$, then there is a unique (in law) optimal coupling achieving the infimum in~\eqref{eq:Wp}~\cite[Theorem 9.4]{villaniOptimalTransport2009}.

We next introduce the Wasserstein barycenters in the metric space $(\gl^d_p(\Omega),\Wp^d_p)$ which was introduced in~\cite{legouicExistenceConsistencyWasserstein2017}. Let $K\in\zzs$ and let
\begin{align*}
    \Delta^K\dy&\dkh{\tauc\dy(\tau_0,\dots,\tau_{K-1})\in[0,1]^K:\sum_{k=0}^{K-1}\tau_k=1}, \\
    \Delta^K_+\dy&\dkh{\tauc\dy(\tau_0,\dots,\tau_{K-1})\in(0,1]^K:\sum_{k=0}^{K-1}\tau_k=1}.
\end{align*}
Assume that $(\Omega,d)$ is a separable locally compact geodesic space. Given the family of measures $(\muc_k)_{0\lee k\lee K-1}\in\gl_p^d(\Omega)^K$ and weights $\tauc\dy(\tau_k)_{0\lee k\lee K-1}\in\Delta^{K}$, there exists a minimizer of the problem:
\begin{equation*}
    \Bary_\tauc(\muc_0,\dots,\muc_{K-1})\in\arginf_{\nu\in\gl^d_p(\Omega)}\sum_{k=0}^{K-1}\tau_k\Wp^d_p(\nuc,\muc_k)^p,
\end{equation*}
which is the barycenter of the family of measures $(\muc_k)_{0\lee k\lee K-1}$ with barycentric weights $\tauc$.

\subsection{Optimal transport between Gaussian distributions}
Let $n\in\zzs$, and let $\ml{X}=\Omega=\ssyn$. For each $i\in\{0,1\}$, let $\muc_i=\ml{N}(\mc_i,\Sigma_i)$ be a Gaussian distribution with mean $\mc_i\in\ssyn$ and covariance matrix $\Sigma_i\in\psd^n$. Here, we denote by $\psd^n$ the set of positive semi-definite matrices (PSD) in $\ssy^{n\times n}$, and by $\zdz^n$ the set of symmetric positive definite matrices (SPD) in $\ssy^{n\times n}$. Specifically, for any $\mc\in\ssyn$ and $\Sigma\in\psd^n$, we denote the probability density function of $\ml{N}(\mc,\Sigma)$ by
\begin{equation*}
    g_{\mc,\Sigma}(\xc)\dy (2\pi)^{-n/2}|\Sigma|^{-n/2}e^{(\xc-\mc)^\zz\Sigma^{-1}(\xc-\mc)},\quad\forall\xc\in\ssyn.
\end{equation*}
From~\cite{takatsuWassersteinGeometryGaussian2011}, the 2-Wasserstein distance $\W$ between $\muc_0$ and $\muc_1$ is given by
\begin{equation*}
    \W(\muc_0,\muc_1)^2=\fs{\mc_0-\mc_1}_2+\Tr{\Sigma_0+\Sigma_1-2(\Sigma_0^{\frac{1}{2}}\Sigma_1\Sigma_0^{\frac{1}{2}})^{\frac{1}{2}}},
\end{equation*}
where, for any $M\in\psd^n$, we denote by $M^{\frac{1}{2}}$ its unique positive semi-definite square root. If $\Sigma_0$ is non-singular, then the optimal transport map $T$ from $\muc_0$ to $\muc_1$ has a closed and affine form
\begin{equation*}
    T(\xc)=\Sigma_0^{-\frac{1}{2}}(\Sigma_0^{\frac{1}{2}}\Sigma_1\Sigma_0^{\frac{1}{2}})^{\frac{1}{2}}\Sigma_0^{-\frac{1}{2}}(\xc-\mc_0)+\mc_1=\Sigma_0^{-1}(\Sigma_0\Sigma_1)^{\frac{1}{2}}(\xc-\mc_0)+\mc_1.
\end{equation*}

\subsection{Optimal transport for domain adaptation}\label{sec:Optimal transport for domain adaptation}
In this paper, we focus on DA for classification tasks. Given $n,L,K\in\zzs$, and $n_0,\dots,n_{K-1},n_T\in\zzs$. As an important case, we consider an adaptation problem with $K$ labeled source domains and a single unlabeled target domain. Let $X_f(\subseteq\ssyn)$ be the feature space, $Y_\ell\dy\Delta^L$ be the label space with one-hot encoding, $f^\ast:X_f\to \Delta^{L}$ be the target function to learn, and $F_\ell:\Delta^{L}\times\Delta^L\to\ssy$ be a loss function penalizing errors with respect to $f^\ast$. The input is $K$ labeled source datasets $\ml{D}_0^S\dy\{(\xc_{0,i},\yc_{0,i})\}_{i=1}^{n_0},\dots,\ml{D}_{K-1}^S\dy\{(\xc_{K-1,i},\yc_{K-1,i})\}_{i=1}^{n_{K-1}}$ and an unlabeled target dataset $\ml{D}^T\dy\{\xc_{T,i}\}_{i=1}^{n_T}$. Here, we consider the empirical risk minimization principle~\cite{vapnikNatureStatisticalLearning2000}, which suggests minimizing a functional similar to the following form:
\begin{equation*}
    R(f)\dy\frac{1}{\sum_{k=0}^{K-1}n_k}\sum_{k=0}^{K-1}\sum_{i=1}^{n_k}F_\ell(f(\xc_{k,i}),\yc_{k,i}).
\end{equation*}

The concept of optimal transport for domain adaptation was introduced by~\cite{courtyOptimalTransportDomain2017}, focusing on aligning samples from $\ml{D}^S\dy\bigcup_{0\lee k\lee K-1}\ml{D}_k^S$ to those of $\ml{D}^T$ by solving an empirical optimal transport problem. Once the optimal map $T^\ast$ is found, this approach effectively generates a new dataset $\{(T^\ast(\xc),\yc):(\xc,\yc)\in\ml{D}^S\}$ using the barycentric map, where the source domain points retain their original labels after being mapped to the target domain. This procedure is valid under the covariate shift assumption~\cite{sugiyamaCovariateShiftAdaptation2007}, but it has cubic computational complexity with respect to the number of samples. In this work, we address this limitation by generalizing the GMM-OT framework proposed by~\cite{delonWassersteinTypeDistanceSpace2020,montesumaOptimalTransportDomain2025}.

\section{Metrics between finite mixtures}\label{sec:Distance between generic mixtures}
This section establishes metrics for finite mixtures. Our main result (\cref{thm:mixture metric}) guarantees the well-definedness of these extended metrics via generalized Minkowski-type inequalities.
First, let us define a nonempty set of particular distributions, which we call atoms, and mixtures thereof. These notation is similar to that used in~\cite{dussonWassersteintypeMetricGeneric2026}.
\begin{definition}[$\ml{A}$-mixture]
    Let $\ml{A}$ be a subset of $\gl(\Omega)$, called dictionary of atoms. We denote by $\ml{M}(\ml{A})$ the set of finite mixtures of $\ml{A}$, i.e. $\nuc\in\ml{M}(\ml{A})$ if and only if there exist $J\in\zzs$, $(\muc_1,\dots,\muc_J)\in\ml{A}^J$ and $(\lambda_1,\dots,\lambda_J)\in\Delta^J$ such that 
    \begin{equation*}
        \nuc(\xc)=\sum_{j=1}^J \lambda_j\muc_j,
    \end{equation*}
    which means
    \begin{equation*}
        F_{\nuc}(\xc)=\sum_{j=1}^J \lambda_jF_{\muc_j}(\xc),\quad\forall\xc\in\Omega,
    \end{equation*}
    where $F_{\nuc},F_{\muc_1},\dots,F_{\muc_J}$ are corresponding cumulative distribution functions. Then $\ml{M}(\ml{A})\subseteq\gl(\Omega)$.
\end{definition}

To define a metric on the set of finite mixtures and derive essential properties, the following identification assumption is required. 
\begin{assumption}[Identifiability]\label{ass:Identifiability}
    We say that $\ml{M}(\ml{A})$ is identifiable, if for any two mixtures of $\ml{A}$,
    \begin{equation*}
        \nuc_i=\sum_{j_i=1}^{J_i} \lambda_{i,j_i}\muc_{i,j_i},\quad \lambda_{i,j_i}>0,\kg\forall j_i\in\{1,\dots,J_i\}, i\in\{0,1\}, 
    \end{equation*}
    where the atomic distributions $(\muc_{0,1},\dots,\muc_{0,J_0})$ are pairwise distinct, and similarly for $(\muc_{1,1},\dots,\muc_{1,J_1})$. Then $\nuc_0=\nuc_1$ if and only if $J_0=J_1$, and the indices in the sums can be reordered such that $\lambda_{0,j}=\lambda_{1,j}$ and $\muc_{0,j}=\muc_{1,j}$ for all $j=1,\dots,J_0$.
\end{assumption}

Classical examples of identifiable mixtures are the set of Gaussian or Cauchy mixtures~\cite{yakowitzIdentifiabilityFiniteMixtures1968}, more identifiable mixtures can be found in~\cite{holzmannIdentifiabilityFiniteMixtures2006}. To define metrics between mixtures, we begin with the following metric assumption on the dictionary of atoms.

\begin{assumption}\label{ass:metric}
    The function $W:\ml{A}\times\ml{A}\to\ffss$ defines a finite metric on $\ml{A}$.
\end{assumption}

In particular, the Wasserstein metric $\Wp^d_p$ on $\gl^d_p(\Omega)$ satisfies~\cref{ass:metric} with $p\gee1$~\cite{villaniOptimalTransport2009}. In order to get more metrics that satisfy~\cref{ass:metric}, we introduce the following collection of functions.

\begin{definition}[The collection of functions, $\MI$~\cite{matkowskiConverseMinkowskisInequality1990}]
    For any increasing bijection $\varphi:\ffss\to\ffss$, we can define
    \begin{equation*}
        F_\vphi:\ffss^2\to\ffss,(x_0,x_1)\mapsto\vphi(\vphi^{-1}(x_0)+\vphi^{-1}(x_1)),\quad\forall (x_0,x_1)\in\ffss^2.
    \end{equation*}
    We denote by $\MI$ the set of all the increasing bijection $\varphi:\ffss\to\ffss$ such that $F_\vphi$ is concave in $\ffss^2$.
\end{definition}

The functions in $\MI$ satisfy the following inequality, which is a generalization of the Minkowski inequality introduced by Hardy~\cite[3.16]{hardyInequalities1964}.
\begin{lemma}[\cite{matkowskiConverseMinkowskisInequality1990}]\label{pro:generalized minkowski}
    Let $\varphi:\ffss\to\ffss$ be an increasing bijection. Fix an integer $J\gee2$, and fix $\lambdac=(\lambda_1,\dots,\lambda_J)\in\Delta^J$. Then $\vphi\in\MI$ if and only if
    \begin{align*}
        \vphi^{-1}\xkh{\sum_{j=1}^J\lambda_j\vphi(x_j+y_j)}\lee\vphi^{-1}\xkh{\sum_{j=1}^J\lambda_j\vphi(x_j)}+\vphi^{-1}\xkh{\sum_{j=1}^J\lambda_j\vphi(y_j)},\\
        \forall x_j,y_j\gee0,j\in\{1,\dots,J\}.
    \end{align*}
    Moreover, if the increasing bijection $\vphi\in C^2(\ffss)$, $\vphi''>0$ in $(0,+\infty)$ and the function $\vphi'/\vphi''$ is superadditive in $(0,+\infty)$, then $\vphi\in\MI$.
\end{lemma}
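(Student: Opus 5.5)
The plan is to prove the two parts of \cref{pro:generalized minkowski} separately. For the equivalence, I would rewrite the inequality as a statement about the function
\begin{equation*}
G(\xc)\dy\vphi^{-1}\Big(\sum_{j=1}^J\lambda_j\vphi(x_j)\Big),\qquad \xc\in\ffss^J .
\end{equation*}
$G$ is positively homogeneous in no sense, so I cannot reduce subadditivity to convexity. Instead I would substitute $u_j=\vphi(x_j)$ and $v_j=\vphi(y_j)$. The claimed inequality then reads
\begin{equation*}
\sum_j\lambda_j F_\vphi(u_j,v_j)\lee F_\vphi\Big(\sum_j\lambda_ju_j,\sum_j\lambda_jv_j\Big).
\end{equation*}
To see this, apply the increasing $\vphi$ to both sides of the original inequality. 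The left side becomes $\sum_j\lambda_j\vphi(\vphi^{-1}(u_j)+\vphi^{-1}(v_j))$. The right side becomes $\vphi(\vphi^{-1}(\bar u)+\vphi^{-1}(\bar v))$, where $\bar u=\sum_j\lambda_j u_j$ and $\bar v=\sum_j\lambda_j v_j$. The rewritten inequality is exactly Jensen's inequality for $F_\vphi$ on $\ffss^2$ with weights $\lambdac$. Since $\vphi$ is a bijection of $\ffss$, the pairs $(u_j,v_j)$ range over all of $\ffss^2$ as $x_j,y_j$ do.

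\emph{Sufficiency.} If $F_\vphi$ is concave, the finite Jensen inequality holds, and the statement follows.

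\emph{Necessity.} I would take the inequality for the given $J$ and $\lambdac$ and derive midpoint-type concavity. Set all but two of the points equal and vary the remaining ones. Together with continuity of $F_\vphi$ (automatic, since $\vphi$ and $\vphi^{-1}$ are monotone bijections of $\ffss$, hence continuous), this yields concavity. This step is the main obstacle when $\lambdac$ is fixed and not $(1/2,1/2)$. I would first use points $P$ and $Q$ with weights $\lambda_1$ and $1-\lambda_1$, which gives $\lambda_1$-concavity. A function that is $t$-concave for a fixed $t\in(0,1)$ and continuous is concave; this is a classical result (Daróczy--Páles type), which I would cite or prove by iterating to a dense set of weights. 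If some $\lambda_j=0$, I would choose a positive weight for the split.

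For the sufficient condition, I would verify concavity of $F_\vphi$ by computing its Hessian on $(0,\infty)^2$. Write $a=\vphi^{-1}(x_0)$, $b=\vphi^{-1}(x_1)$ and $s=a+b$. Then
\begin{equation*}
\partial_{x_0}F_\vphi=\frac{\vphi'(s)}{\vphi'(a)},
\end{equation*}
and the second derivatives follow in the same way. Since the diagonal second derivatives are negative, concavity amounts to checking that sign and that the determinant is nonnegative. After factoring, the determinant condition reduces to
\begin{equation*}
\frac{\vphi'}{\vphi''}(a)+\frac{\vphi'}{\vphi''}(b)\lee\frac{\vphi'}{\vphi''}(a+b),
\end{equation*}
which is exactly superadditivity of $\vphi'/\vphi''$. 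The diagonal sign requires $\frac{\vphi'}{\vphi''}(a)\lee\frac{\vphi'}{\vphi''}(a+b)$, which follows from superadditivity and positivity of $\vphi'/\vphi''$. Finally, concavity extends to the boundary of $\ffss^2$ by continuity.
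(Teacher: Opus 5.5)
Your argument is correct wherever the lemma is true, and it goes beyond the paper, which gives no proof of \cref{pro:generalized minkowski} and only cites Matkowski and Hardy--Littlewood--P\'olya. Your substitution $u_j=\vphi(x_j)$, $v_j=\vphi(y_j)$ turns the inequality into the finite Jensen inequality for $F_\vphi$ with weights $\lambdac$. This is the same mechanism the paper uses, in supporting-hyperplane form, to prove \cref{pro:generalized integral minkowski}. Your Hessian computation also checks out. Set $\chi\dy\vphi'/\vphi''$; it is positive on $(0,+\infty)$ because $\vphi'$ is strictly increasing there and $\vphi'(0)\gee0$. Then $\partial^2_{x_0x_0}F_\vphi$ is a positive multiple of $\chi(a)-\chi(a+b)$, and the Hessian determinant is nonnegative exactly when $\chi(a)+\chi(b)\lee\chi(a+b)$. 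Upgrading $t$-concavity of the continuous function $F_\vphi$ to concavity, for a single fixed $t\in(0,1)$, is standard. A minimum argument on $s\mapsto F_\vphi(sP+(1-s)Q)-sF_\vphi(P)-(1-s)F_\vphi(Q)$ settles it in a few lines.

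The step that fails is your handling of degenerate weights in the direction from the inequality to $\vphi\in\MI$. ``Choose a positive weight for the split'' needs an index with $\lambda_k\in(0,1)$, and none exists when $\lambdac$ is a vertex of $\Delta^J$. This cannot be patched, because the lemma as stated is false in that case. For $\lambdac=(1,0,\dots,0)$ both sides of the inequality equal $x_1+y_1$, so every increasing bijection satisfies it. Yet $\vphi(x)=\sqrt{x}$ gives $F_\vphi(x_0,x_1)=\sqrt{x_0^2+x_1^2}$, and $F_\vphi(\tfrac12,\tfrac12)=\tfrac{1}{\sqrt2}<1=\tfrac12F_\vphi(1,0)+\tfrac12F_\vphi(0,1)$, so $\vphi\notin\MI$. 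You should state explicitly that this direction needs $\lambdac$ to have at least two positive entries (for instance $\lambdac\in\Delta^J_+$), or else the inequality for all $\lambdac\in\Delta^J$. Under that hypothesis every positive $\lambda_k$ lies in $(0,1)$, your split gives $\lambda_k$-concavity, and the rest of your proof goes through. The direction $\vphi\in\MI\Rightarrow$ inequality, which the paper relies on in \cref{thm:mixture metric} and later, holds for every $\lambdac\in\Delta^J$.
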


\begin{remark}\label{rm:MI examples}
    It is obvious that $\vphi(x)=x^p\in\MI$ with $p\gee1$ by the classical Minkowski inequality. Here are other examples in $\MI$:
    \begin{enumerate}
        \item For any fixed $t_0>0$, let $\phi:(0,+\infty)\to(0,+\infty]$ be a superadditive function, then
        \begin{equation*}
            \vphi(x)=\int_0^xe^{\int_{t_0}^t\frac{1}{\phi(u)}\d u}\d t\in\MI;
        \end{equation*}
        \item $\vphi(x)=\frac{x^2}{x+1}\in\MI$~\cite{matkowskiConverseMinkowskisInequality1990};
        \item $\vphi(x)=\begin{cases}
            xe^{-1/x},&x>0 \\
            0,&x=0
            \end{cases}\in\MI$~\cite{matkowskiConverseMinkowskisInequality1990};
        \item $a\vphi\in\MI$ if $\vphi\in\MI$ and $a>0$.
    \end{enumerate}
\end{remark}

Moreover, we can easily obtain the generalized integral form of~\cref{pro:generalized minkowski}.
\begin{lemma}\label{pro:generalized integral minkowski}
    Let $(\ml{X},\muc)$ be a probability measure space, and let $f_0,f_1:\ml{X}\to\ffss^2$ be nonnegative measurable functions.
    If $\vphi\in\MI$, then
    \begin{equation*}
        \vphi^{-1}\xkh{\int_{\ml{X}}\vphi(f_0(\xc)+f_1(\xc))\d\muc(\xc)}\lee\sum_{i\in\{0,1\}}\vphi^{-1}\xkh{\int_{\ml{X}}\vphi(f_i(\xc))\d\muc(\xc)}.
    \end{equation*}
\end{lemma}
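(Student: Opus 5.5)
The plan is to pass from the finite-sum version in \cref{pro:generalized minkowski} to integrals by approximating $f_0,f_1$ with nonnegative simple functions. The key structural fact is that $F_\vphi$ is concave on $\ffss^2$. We also use that $F_\vphi$ is positively homogeneous-like in the sense needed for the ``only if'' direction of the finite statement. Since the finite statement holds for every $J$ and every $\lambdac\in\Delta^J$, it covers all weight vectors that arise when a probability measure is concentrated on finitely many pieces.

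\textbf{Step 1 (simple functions).} Let $f_0,f_1$ be nonnegative simple functions. Take a common finite measurable partition $\{A_j\}_{j=1}^J$ of $\ml{X}$ on which both are constant, say $f_0\equiv x_j$ and $f_1\equiv y_j$ on $A_j$. Set $\lambda_j=\muc(A_j)$, so that $\lambdac\in\Delta^J$. Then $\int\vphi(f_0+f_1)\d\muc=\sum_j\lambda_j\vphi(x_j+y_j)$, and similarly for each $f_i$. The inequality is therefore exactly \cref{pro:generalized minkowski}. The degenerate case $J=1$ is trivial, since it reads $\vphi^{-1}(\vphi(x+y))=x+y$. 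Alternatively, one can pad the partition with empty sets to get $J\gee2$.

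\textbf{Step 2 (limit).} For general nonnegative measurable $f_0,f_1$, choose nondecreasing sequences of simple functions $s_{i,\ell}\uparrow f_i$ pointwise. Since $\vphi$ is an increasing bijection of $\ffss$, it is continuous. Hence $\vphi(s_{i,\ell})\uparrow\vphi(f_i)$ and $\vphi(s_{0,\ell}+s_{1,\ell})\uparrow\vphi(f_0+f_1)$ pointwise. By monotone convergence, each integral converges in $[0,+\infty]$ to the corresponding integral for $f_i$. Extend $\vphi^{-1}$ to $[0,+\infty]$ by $\vphi^{-1}(+\infty)=+\infty$. This extension is continuous and increasing, because $\vphi^{-1}$ is an increasing bijection with $\vphi^{-1}(t)\to+\infty$. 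Passing to the limit in the inequality for $s_{0,\ell},s_{1,\ell}$ then gives the claim, with the convention that the right-hand side may be $+\infty$.

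\textbf{Main obstacle.} The only delicate points are the infinite-integral case and the continuity of $\vphi^{-1}$ at $+\infty$; both are handled by the extension above. I read the statement's codomain $\ffss^2$ as a typo for $\ffss$ and treat $f_0,f_1$ as scalar-valued.
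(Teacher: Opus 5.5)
Your proof is correct, but it runs in the opposite direction from the paper's.

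\textbf{The paper's route.} The paper never invokes the finite inequality of \cref{pro:generalized minkowski}. It sets $u_i=\int_{\ml{X}}\vphi(f_i)\d\muc$, taken finite without loss of generality. It then takes an affine majorant $a_0v_0+a_1v_1+b$ of the concave function $F_\vphi$ that touches it at $(u_0,u_1)$. Using the pointwise identity $\vphi(f_0+f_1)=F_\vphi(\vphi(f_0),\vphi(f_1))$ and integrating gives $\int_{\ml{X}}\vphi(f_0+f_1)\d\muc\lee F_\vphi(u_0,u_1)$ in one line. This is simply Jensen's inequality for $F_\vphi$.

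\textbf{What each route buys.} The paper's argument uses only the definition of $\MI$ and needs no approximation. It even recovers the finite inequality as the special case of a finitely supported $\muc$. What it leaves implicit is the existence of a finite supergradient at $(u_0,u_1)$. That is guaranteed only when $u_0,u_1>0$; it fails at the boundary for $\vphi(x)=x^2$, although that case is trivial because then some $f_i=0$ $\muc$-a.e. The infinite case is likewise only waved away. Your route treats the cited finite inequality as a black box and transfers it to integrals via simple functions on a common partition, monotone convergence, and continuity of the extended $\vphi^{-1}$ on $[0,+\infty]$. This sidesteps any supergradient issue and handles infinite integrals explicitly. The cost is reliance on the external result plus the limiting bookkeeping.

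\textbf{One correction.} Drop the sentence in your opening paragraph about $F_\vphi$ being ``positively homogeneous-like.'' $F_\vphi$ is not homogeneous for general $\vphi\in\MI$: for $\vphi(x)=x^2/(x+1)$ one has $F_\vphi(t,t)/t\to4$ as $t\to0$ but $\to2$ as $t\to+\infty$. Your Steps 1--2 never use such a property. The implication from $\vphi\in\MI$ to the finite inequality follows from concavity alone.
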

\begin{proof}
    Without loss of generality, we assume that
    \begin{equation*}
        u_i=\int_{\ml{X}}\vphi(f_i(\xc))\d\muc(\xc)\in[0,+\infty),\quad\forall i\in\{0,1\}.
    \end{equation*}
    If $\vphi\in\MI$, note that $F_\vphi$ is concave in $\ffss^2$, so there exist $a_0,a_1,b\in\ssy$ such that
    \begin{equation*}
        \begin{cases}
            F_\vphi(v_0,v_1)\lee a_0v_0+a_1v_1+b,\quad\forall v_0,v_1\gee0; \\
            F_\vphi(u_0,u_1)=a_0u_0+a_1u_1+b.
        \end{cases}
    \end{equation*}
    It follows that
    \begin{align*}
        \int_{\ml{X}}F_\vphi\xkh{\vphi(f_0(\xc)),\vphi(f_1(\xc))}\d\muc(\xc)
        &\lee\int_{\ml{X}}\zkh{\sum_{i\in\{0,1\}}a_i\vphi(f_i(\xc))+b}\d\muc(\xc) \\
        &=a_0u_0+a_1u_1+b \\
        &=F_\vphi\xkh{\int_{\ml{X}}\vphi(f_0(\xc))\d\muc(\xc),\int_{\ml{X}}\vphi(f_1(\xc))\d\muc(\xc)}.
    \end{align*}
    Since $\vphi$ is an increasing bijection, then
    \begin{align*}
        \int_{\ml{X}}\vphi(f_0(\xc)+f_1(\xc))\d\muc(\xc)&\lee\vphi\xkh{\sum_{i\in\{0,1\}}\vphi^{-1}\xkh{\int_{\ml{X}}\vphi(f_i(\xc))\d\muc(\xc)}}\\
        \implies\vphi^{-1}\xkh{\int_{\ml{X}}\vphi(f_0(\xc)+f_1(\xc))\d\muc(\xc)}&\lee\sum_{i\in\{0,1\}}\vphi^{-1}\xkh{\int_{\ml{X}}\vphi(f_i(\xc))\d\muc(\xc)}.
    \end{align*}
    This concludes the proof.
\end{proof}

Then we can define a series of metrics on the proper space.
\begin{lemma}\label{pro:vphi wasserstein distance}
    Let $(\ml{X},d)$ be a Polish metric space, and let $\vphi\in\MI$.
    The Wasserstein space of function $\vphi$ is defined as
    \begin{equation*}
        \gl_{\vphi}^d(\ml{X})\dy\dkh{\muc\in\gl({\ml{X}}):\exists\yc\in\ml{X},\st\int_\ml{X}\vphi(d(\xc,\yc))\d \muc(\xc)<+\infty},
    \end{equation*}
    where $\yc\in\ml{X}$ is arbitrary. This space does not depend on the choice of the point $\yc$.
    For any two probability measures $\muc_0,\muc_1$ on $\gl_{\vphi}^d(\ml{X})$, the corresponding Wasserstein distance of function $\vphi$ between $\muc_0$ and $\muc_1$ is defined by the formula
    \begin{equation}\label{eq:Wasserstein distance of function}
        \begin{aligned}
            \Wp_{\vphi}^d(\muc_0,\muc_1)\dy&\vphi^{-1}\xkh{\inf_{\gammac\in\Pi(\muc_0,\muc_1)}\int_{\ml{X}\times\ml{X}}\vphi(d(\xc_0,\xc_1))\d\gammac(\xc_0,\xc_1)} \\
        =&\inf\dkh{\vphi^{-1}\xkh{\md{E}[\vphi(d(\Xc_0,\Xc_1))]}:\law(\Xc_i)=\muc_i,i\in\{0,1\}}.
        \end{aligned}
    \end{equation}
    Then $\Wp_{\vphi}^d$ defines a finite distance on $\gl_{\vphi}^d(\ml{X})$.
\end{lemma}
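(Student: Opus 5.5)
To show that $\Wp_\vphi^d$ is a finite distance on $\gl_\vphi^d(\ml{X})$, I will follow the classical argument for $\Wp_p$ (cf.~\cite[Chapter 6]{villaniOptimalTransport2009}), replacing Minkowski's inequality by \cref{pro:generalized integral minkowski}. The steps are: independence of the base point, finiteness, symmetry and identity of indiscernibles, and the triangle inequality.

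\textbf{Preliminary inequality.} Taking $J=2$ and $\lambdac=(1/2,1/2)$ in \cref{pro:generalized minkowski}, with $x_1=a$, $y_1=0$, $x_2=0$, $y_2=b$, and writing $\vphi(0)=0$, gives
\begin{equation*}
\vphi(a+b)\lee 2\vphi\xkh{\vphi^{-1}(\vphi(a)/2)+\vphi^{-1}(\vphi(b)/2)}.
\end{equation*}
I would rather use a cleaner bound. Monotonicity of $\vphi$ gives $\vphi(a+b)\lee\vphi(2\max(a,b))$. It remains to control $\vphi(2t)$ by $\vphi(t)$. For this I apply \cref{pro:generalized minkowski} with $x_j=y_j=t$. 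That only yields $\vphi^{-1}(\vphi(2t))\lee 2t$, which is trivial. So I instead use concavity of $F_\vphi$: since $F_\vphi(0,0)=0$, the function $F_\vphi$ is subadditive-homogeneous. This gives $F_\vphi(2u,2u)\lee 2F_\vphi(u,u)$ for $u\gee0$, i.e. $\vphi(2t)\lee 2\vphi(2\cdot\vphi^{-1}(\cdot))$. That is again circular.

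The honest route is therefore the integral inequality itself. For $\muc\in\gl_\vphi^d$ with base point $\yc$ and another point $\yc'$, set $f_0=d(\cdot,\yc)$ and $f_1\equiv d(\yc,\yc')$. \cref{pro:generalized integral minkowski} then gives
\begin{equation*}
\vphi^{-1}\xkh{\textstyle\int\vphi(d(\xc,\yc'))\d\muc}\lee\vphi^{-1}\xkh{\textstyle\int\vphi(d(\xc,\yc))\d\muc}+d(\yc,\yc'),
\end{equation*}
using the triangle inequality for $d$ and monotonicity of $\vphi$. Hence the integral is finite for every base point, which proves independence from $\yc$.

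\textbf{Finiteness.} Take the product coupling $\muc_0\otimes\muc_1$ and apply the same inequality with $f_0=d(\xc_0,\yc)$ and $f_1=d(\yc,\xc_1)$ on $\ml{X}\times\ml{X}$. Both terms on the right are finite, so the infimum in \eqref{eq:Wasserstein distance of function} is finite.

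\textbf{Symmetry and identity of indiscernibles.} Symmetry is immediate by swapping coordinates of the coupling. If $\muc_0=\muc_1$, the diagonal coupling gives value $\vphi^{-1}(0)=0$. Conversely, suppose $\Wp_\vphi^d(\muc_0,\muc_1)=0$. The cost $\vphi\circ d$ is continuous and nonnegative, so \cref{thm:existence of infimum} (with $u_k=0$) provides an optimal coupling with $\qw[\vphi(d(\Xc_0,\Xc_1))]=0$. Since $\vphi>0$ on $(0,\infty)$, this forces $\Xc_0=\Xc_1$ almost surely, and hence $\muc_0=\muc_1$.

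\textbf{Triangle inequality (main obstacle).} I expect this step to carry the real content. Take optimal couplings $\gammac_{01}$ of $(\muc_0,\muc_1)$ and $\gammac_{12}$ of $(\muc_1,\muc_2)$, again from \cref{thm:existence of infimum}. Glue them along $\muc_1$ via the gluing lemma (disintegration on the Polish space) to obtain $(\Xc_0,\Xc_1,\Xc_2)$ with these pairwise laws. Then, by monotonicity of $\vphi$, the triangle inequality for $d$, and \cref{pro:generalized integral minkowski} with $f_0=d(\Xc_0,\Xc_1)$ and $f_1=d(\Xc_1,\Xc_2)$,
\begin{equation*}
\Wp_\vphi^d(\muc_0,\muc_2)\lee\vphi^{-1}\xkh{\qw[\vphi(d(\Xc_0,\Xc_1)+d(\Xc_1,\Xc_2))]}\lee\Wp_\vphi^d(\muc_0,\muc_1)+\Wp_\vphi^d(\muc_1,\muc_2).
\end{equation*}
If one prefers to avoid optimal couplings, the same argument works with $\varepsilon$-optimal couplings followed by letting $\varepsilon\to0$, using continuity of $\vphi^{-1}$.
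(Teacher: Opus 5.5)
Your proposal is correct and follows essentially the same route as the paper: base-point independence and finiteness come from \cref{pro:generalized integral minkowski} combined with the triangle inequality for $d$; the identity of indiscernibles comes from an optimal coupling concentrated on the diagonal; and the triangle inequality comes from the gluing lemma plus the generalized Minkowski inequality. The opening ``preliminary inequality'' paragraph is an abandoned false start and should be deleted, since nothing afterwards uses it; in particular, your substitution $x_1=a$, $y_1=0$, $x_2=0$, $y_2=b$ yields $\vphi^{-1}\bigl(\tfrac12\vphi(a)+\tfrac12\vphi(b)\bigr)$ on the left, not anything involving $\vphi(a+b)$.
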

\begin{proof}
    We extend the proof of the metric properties associated with~\cite[Definition 6.1]{villaniOptimalTransport2009} to our setting. Let us point out that there exists at least one minimizer of~\eqref{eq:Wasserstein distance of function} by~\cref{thm:existence of infimum}, so $\Wp_{\vphi}^d$ is well-defined. Let $\gammac$ be a transport plan between two elements $\muc_0$ and $\muc_1$ in $\gl_{\vphi}^d(\ml{X})$. Then by~\cref{pro:generalized integral minkowski}, the inequality
    \begin{align*}
       &\vphi^{-1}\xkh{\int_{\ml{X}\times\ml{X}}\vphi(d(\xc_0,\xc_1))\d\gammac(\xc_0,\xc_1)} \\
       \lee&\vphi^{-1}\xkh{\int_{\ml{X}\times\ml{X}}\vphi(d(\xc_0,\yc)+d(\xc_1,\yc))\d\gammac(\xc_0,\xc_1)} \\
       \lee&\sum_{i\in\{0,1\}}\vphi^{-1}\xkh{\int_{\ml{X}}\vphi(d(\xc_i,\yc))\d\muc_i(\xc_i)}
    \end{align*}
    shows that $\Wp_{\vphi}^d(\muc_0,\muc_1)<+\infty$. Similarly, for any $\muc\in\gl(\ml{X}),\yc,\yc'\in\ml{X}$,
    \begin{align*}
        \vphi^{-1}\xkh{\int_{\ml{X}}\vphi(d(\xc,\yc'))\d\muc(\xc)}\lee&\vphi^{-1}\xkh{\int_{\ml{X}}\vphi(d(\xc,\yc)+d(\yc,\yc'))\d\muc(\xc)} \\
       \lee&\vphi^{-1}\xkh{\int_{\ml{X}}\vphi(d(\xc,\yc))\d\muc(\xc)}+d(\yc,\yc')
    \end{align*}
    shows that the definition of $\gl_{\vphi}^d(\ml{X})$ does not depend on the choice of the point $\yc$. Next, we will proof that $\Wp_{\vphi}^d$ satisfies the axioms of a distance.
    
    First, $\Wp_{\vphi}^d$ is clearly symmetric and nonnegative. Second, let $\muc_0,\muc_1,\muc_2\in\gl(\ml{X})$, and let $(\Xc_0,\Xc_1)$ be an optimal coupling of $(\muc_0,\muc_1)$ and $(\Xc_1,\Xc_2)$ an optimal coupling of $(\muc_1,\muc_2)$ (for the cost function $\vphi\circ d$). By the Gluing Lemma in~\cite[Chapter 1]{villaniOptimalTransport2009}, there exist random variables $\Xc_0',\Xc_1',\Xc_2'$ with $\law(\Xc_0',\Xc_1')=\law(\Xc_0,\Xc_1)$ and $\law(\Xc_1',\Xc_2')=\law(\Xc_1,\Xc_2)$. In particular, $(\Xc_0',\Xc_2')$ is a coupling of $(\muc_0,\muc_2)$, so
    \begin{align*}
        \Wp_{\vphi}^d(\muc_0,\muc_2)&\lee\vphi^{-1}\xkh{\md{E}[\vphi(d(\Xc_0',\Xc_2'))]} \\
        &\lee\vphi^{-1}\xkh{\md{E}[\vphi(d(\Xc_0',\Xc_1')+d(\Xc_1',\Xc_2'))]} \\
        &\lee \vphi^{-1}\xkh{\md{E}[\vphi(d(\Xc_0',\Xc_1'))]}+\vphi^{-1}\xkh{\md{E}[\vphi(d(\Xc_1',\Xc_2'))]} \\
        &=\Wp_{\vphi}^d(\muc_0,\muc_1)+\Wp_{\vphi}^d(\muc_1,\muc_2),
    \end{align*}
    where the inequality leading to the third line is due to~\cref{pro:generalized integral minkowski}, and the last equality follows from the fact that $(\Xc_0',\Xc_1')$ and $(\Xc_1',\Xc_2')$ are optimal couplings. So $\Wp_{\vphi}^d$ satisfies the triangle inequality.

    Finally, assume that $\Wp_{\vphi}^d(\muc_0,\muc_1)=0$, then there exists a transference plan which is entirely concentrated on the diagonal $(\xc_0=\xc_1)$ in $\ml{X}\times\ml{X}$. So $\muc_1=\Id_\#\muc_0=\muc_0$.
\end{proof}

Now, for any $\ml{A}\subseteq\gl_{\vphi}^d(\ml{X})$, $\Wp_{\vphi}^d$ satisfies~\cref{ass:metric} by~\cref{pro:vphi wasserstein distance}. Moreover, for any finite metric $W$ on $\ml{A}$, we want to extend it to a global metric on $\ml{M}(\ml{A})$ such that its restriction on $\ml{A}$ coincides with $W$. This motivates the following definition, which generalizes the $\MW$ distance introduced in~\cite{delonWassersteinTypeDistanceSpace2020}.

\begin{definition}[Mixture distance]\label{def:Mixture distance}
    Let $\ml{A}\subseteq\gl(\Omega)$ be a dictionary of atoms. 
    We assume that $\ml{M}\dy\ml{M}(\ml{A})$ satisfies~\cref{ass:Identifiability}, and $W$ satisfies~\cref{ass:metric}, and $\varphi\in\MI$. We define the function $W_{\ml{A},\varphi}:\ml{M}\times\ml{M}\to\ffss$ as follows: for all $\nuc_i=\sum_{j_i=1}^{J_i} \lambda_{i,j_i}\muc_{i,j_i}\in\ml{M},\muc_{i,j_i}\in\ml{A},j\in\{1,\dots,J_i\},i\in\{0,1\}$,
    \begin{equation}\label{eq:Mixture distance}
        W_{\ml{A},\varphi}(\nuc_0,\nuc_1)
            \dy\vphi^{-1}\xkh{\inf_{w\dy(w_{j_0,j_1})\in\Pi(\lambdac_0,\lambdac_1)}\sum_{j_0=1}^{J_0}\sum_{j_1=1}^{J_1}w_{j_0,j_1}\vphi(W(\muc_{0,j_0},\muc_{1,j_1}))},
    \end{equation}
    where 
    \begin{align*}
        \Pi(\lambdac_0,\lambdac_1)\dy\bigg\{&w\dy(w_{j_0,j_1})_{
        \substack{
        1\lee j_i\lee J_i\\
        i\in\{0,1\}}
        }\in\ffss^{J_0\times J_1}:\\
        &\sum_{j_1=1}^{J_1}w_{j_0,j_1}=\lambda_{0,j_0},\sum_{j_0=1}^{J_0}w_{j_0,j_1}=\lambda_{1,j_1},\forall 1\lee j_i\lee J_i,i\in\{0,1\}\bigg\}.
    \end{align*}
\end{definition}
\begin{remark}
    By~\cref{thm:existence of infimum}, there exists at least one minimizer of~\eqref{eq:Mixture distance}. Then~\cref{ass:Identifiability} on $\ml{M}$ guarantees that $W_{\ml{A},\varphi}$ is well-defined and $\eval{W_{\ml{A},\varphi}}_{\ml{A}}=W$.
\end{remark}

Now the following results can be easily proved.
\begin{theorem}\label{thm:mixture metric}
    Let $\ml{A}\subseteq\gl(\Omega)$ be a dictionary of atoms. 
    We assume that $\ml{M}(\ml{A})$ satisfies~\cref{ass:Identifiability}, and $W$ satisfies~\cref{ass:metric}, and $\varphi\in\MI$. Then $W_{\ml{A},\varphi}$ defines a metric on $\ml{M}(\ml{A})$ indeed.
\end{theorem}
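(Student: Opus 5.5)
The plan is to view $W_{\ml{A},\varphi}$ as the Wasserstein distance of function $\vphi$ between two discrete measures on the metric space $(\ml{A},W)$, and then to verify the metric axioms directly, mirroring the proof of \cref{pro:vphi wasserstein distance}.

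First, well-definedness. By \cref{ass:Identifiability}, each $\nuc\in\ml{M}(\ml{A})$ has a unique representation $\sum_j\lambda_j\muc_j$ with $\lambda_j>0$ and pairwise distinct atoms, up to reordering. I will always use this reduced representation. Reordering indices only permutes the coupling matrices, so the infimum in \eqref{eq:Mixture distance} does not depend on the representation chosen. The infimum is attained, since $\Pi(\lambdac_0,\lambdac_1)$ is a compact polytope and the objective is linear. It is also finite, because $W$ is finite by \cref{ass:metric}. Nonnegativity is immediate. Symmetry follows by transposing $w$, using the symmetry of $W$.

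Second, definiteness. Suppose $W_{\ml{A},\varphi}(\nuc_0,\nuc_1)=0$ and let $w$ be an optimal plan. Since $\vphi$ is an increasing bijection of $\ffss$, we have $\vphi(0)=0$ and $\vphi(t)>0$ for $t>0$. Hence $w_{j_0,j_1}>0$ forces $W(\muc_{0,j_0},\muc_{1,j_1})=0$, that is, $\muc_{0,j_0}=\muc_{1,j_1}$. Because the atoms within each mixture are pairwise distinct, every row $j_0$ has exactly one nonzero entry, and so does every column $j_1$. Every row has a nonzero entry since $\lambda_{0,j_0}>0$, and likewise every column. So $w$ is supported on the graph of a bijection $\sigma$, with $\lambda_{0,j}=w_{j,\sigma(j)}=\lambda_{1,\sigma(j)}$ and $\muc_{0,j}=\muc_{1,\sigma(j)}$. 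Therefore $\nuc_0=\nuc_1$. Conversely, the diagonal plan shows that $W_{\ml{A},\varphi}(\nuc,\nuc)=0$.

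Third, the triangle inequality, which is the main step. Take $\nuc_0,\nuc_1,\nuc_2$ and optimal plans $w^{01}\in\Pi(\lambdac_0,\lambdac_1)$ and $w^{12}\in\Pi(\lambdac_1,\lambdac_2)$. I will glue them discretely by setting
\[
\pi_{j_0,j_1,j_2}\dy\frac{w^{01}_{j_0,j_1}w^{12}_{j_1,j_2}}{\lambda_{1,j_1}},
\]
which is valid because $\lambda_{1,j_1}>0$. Its $(0,1)$- and $(1,2)$-marginals are $w^{01}$ and $w^{12}$, so its $(0,2)$-marginal $w^{02}$ lies in $\Pi(\lambdac_0,\lambdac_2)$. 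Apply the triangle inequality for $W$ inside $\vphi$, which is increasing. Then apply \cref{pro:generalized integral minkowski} on the finite probability space of index triples with weights $\pi$, taking $f_0=W(\muc_{0,j_0},\muc_{1,j_1})$ and $f_1=W(\muc_{1,j_1},\muc_{2,j_2})$. This gives
\[
W_{\ml{A},\varphi}(\nuc_0,\nuc_2)\lee\vphi^{-1}\Bigl(\sum\pi\,\vphi(f_0+f_1)\Bigr)\lee\vphi^{-1}\Bigl(\sum\pi\,\vphi(f_0)\Bigr)+\vphi^{-1}\Bigl(\sum\pi\,\vphi(f_1)\Bigr).
\]
By the marginal property of $\pi$ and the optimality of $w^{01}$ and $w^{12}$, the right-hand side equals $W_{\ml{A},\varphi}(\nuc_0,\nuc_1)+W_{\ml{A},\varphi}(\nuc_1,\nuc_2)$. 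Alternatively, the finite form of \cref{pro:generalized minkowski} could be used in place of \cref{pro:generalized integral minkowski}.

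The main care points are two. The definiteness step relies on identifiability, and it relies specifically on positive weights together with distinct atoms. The gluing step needs the middle weights to be positive, which again holds because we use reduced representations.
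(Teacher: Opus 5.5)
Your proposal is correct and follows essentially the same route as the paper: discrete gluing of the two optimal plans with weights $w_{j_0,j_1}w_{j_1,j_2}/\lambda_{1,j_1}$, then the triangle inequality for $W$ inside $\vphi$, then the generalized Minkowski inequality of \cref{pro:generalized minkowski}. The only difference is that the paper glues through $\nuc_0$ rather than through the middle measure, which is equivalent by symmetry; your explicit treatment of well-definedness via reduced representations and of definiteness fills in details the paper merely asserts.
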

\begin{proof}
    This theorem generalizes the results of~\cite{villaniOptimalTransport2009,peyreComputationalOptimalTransport2019}.
    In what follows, we employ the notation defined in~\cref{def:Mixture distance}.

    First, $W_{\ml{A},\varphi}$ is clearly symmetric and nonnegative. Second, $W_{\ml{A},\varphi}(\muc_0,\muc_1)=0$ if and only if $\muc_0=\muc_1$.
    
    Next we prove the triangle inequality. Without loss of generality, let $\lambdac_i=(\lambda_{i,1},\dots,\lambda_{i,J_i})\in\Delta^{J_i}_{+}$, and let $\nuc_i=\sum_{j_i=1}^{J_i} \lambda_{i,j_i}\muc_{i,j_i}\in\ml{M}(\ml{A})$ for each $i\in\{0,1,2\}$. We will show that 
    \begin{equation*}
        W_{\ml{A},\varphi}(\nuc_1,\nuc_2)\lee W_{\ml{A},\varphi}(\nuc_0,\nuc_1)+W_{\ml{A},\varphi}(\nuc_0,\nuc_2).
    \end{equation*}
    For each $i\in\{1,2\}$, let $w^{0,i}\in\ffss^{J_0\times J_{i}}$ be a minimizer of~\eqref{eq:Mixture distance} for $(\nuc_0,\nuc_{i})$. Let us define
    \begin{equation*}
        w^{1,2}_{j_1,j_2}\dy\sum_{j_0=1}^{J_0}\frac{w^{0,1}_{j_0,j_1}w^{0,2}_{j_0,j_2}}{\lambda_{0,j_0}},\quad \forall j_i\in\{1,\dots,J_i\},i\in\{1,2\}.
    \end{equation*}
    A simple calculation leads to show that $w^{1,2}\dy(w^{1,2}_{j_1,j_2})_{\substack{1\lee j_i\lee J_i,\\i\in\{0,1\}}}\in\Pi(\lambdac_1,\lambdac_2)$. Then
    \begin{align*}
        \vphi(W_{\ml{A},\varphi}(\nuc_1,\nuc_2))&\lee\sum_{j_1=1}^{J_1}\sum_{j_2=1}^{J_2}w^{1,2}_{j_1,j_2}\vphi(W(\muc_{1,j_1},\muc_{2,j_2})) \\
        &=\sum_{j_1=1}^{J_1}\sum_{j_2=1}^{J_2}\sum_{j_0=1}^{J_0}\frac{w^{0,1}_{j_0,j_1}w^{0,2}_{j_0,j_2}}{\lambda_{0,j_0}}\vphi(W(\muc_{1,j_1},\muc_{2,j_2})) \\
        &\lee\sum_{\substack{
        1\lee j_i\lee J_i\\
        i\in\{0,1,2\}}}\frac{w^{0,1}_{j_0,j_1}w^{0,2}_{j_0,j_2}}{\lambda_{0,j_0}}\vphi(W(\muc_{0,j_0},\muc_{1,j_1})+W(\muc_{0,j_0},\muc_{2,j_2})), 
    \end{align*}
    using the triangular inequality for $W$. Note that $\vphi\in\MI$, by~\cref{pro:generalized minkowski}, we obtain
    \begin{align*}
        W_{\ml{A},\varphi}(\nuc_1,\nuc_2)&\lee\sum_{k=1,2}\vphi^{-1}\xkh{\sum_{\substack{
        1\lee j_i\lee J_i\\
        i\in\{0,1,2\}}}\frac{w^{0,1}_{j_0,j_1}w^{0,2}_{j_0,j_2}}{\lambda_{0,j_0}}\vphi(W(\muc_{0,j_0},\muc_{k,j_k}))} \\
        &=\sum_{k=1,2}\vphi^{-1}\xkh{\sum_{\substack{
        1\lee j_i\lee J_i\\
        i=0,k}}w^{0,k}_{j_0,j_k}\vphi(W(\muc_{0,j_0},\muc_{k,j_k}))} \\
        &=W_{\ml{A},\varphi}(\nuc_0,\nuc_1)+W_{\ml{A},\varphi}(\nuc_0,\nuc_2).
    \end{align*}
    Now $W_{\ml{A},\varphi}$ defines a metric on $\ml{M}(\ml{A})$ indeed.
\end{proof}
\begin{remark}[\cite{dussonWassersteintypeMetricGeneric2026}]
    Moreover, if $\vphi(x)=x^p$ with $p\gee1$ and $(\ml{A},W)$ is a geodesic space, then $(\ml{M}(\ml{A}),W_{\ml{A},\varphi})$ is also a geodesic space.
\end{remark}

\section{Marginal problems}
A key challenge in computing optimal transport between mixture models is handling continuous marginal constraints. To resolve this, this section reduces continuous formulations to tractable discrete problems. Specifically,~\cref{thm:two marginal equivalent} and its multimarginal generalization (\cref{thm:multimarginal equivalent}) establish this equivalence for discrete couplings.

In view of~\cref{thm:mixture metric}, we focus on the specific case where $\ml{A}\subseteq\gl^d_\vphi(\Omega)$ for some $\vphi\in\MI$ and a distance function $d:\Omega\times\Omega\to\ffss$, and where the atomic metric $W$ is given by the Wasserstein metric $\Wp_\vphi^d$, i.e.
\begin{equation*}
    W(\muc_0,\muc_1)\dy\vphi^{-1}\xkh{\inf_{\gammac\in\Pi(\muc_0,\muc_1)}\int_{\Omega\times\Omega}\vphi(d(\xc_0,\xc_1))\d\gammac(\xc_0,\xc_1)},\quad\forall \muc_0,\muc_1\in\ml{A},
\end{equation*}
which is a well-defined finite metric on $\gl^{d}_\vphi(\Omega)$ by~\cref{pro:vphi wasserstein distance}. By~\cref{thm:existence of infimum}, there exists at least one solution to the above optimal transport problem, and we denote the set of minimizers by $\Gamma_{\vphi}^d(\muc_0,\muc_1)$. To begin with, we will show that the corresponding two-marginal problem can boil down to a simpler discrete optimal transport problem.
\subsection{Two-marginal problems}
It will be useful to introduce additional notation:
\begin{align*}
    \Gamma_{\vphi}^d(\ml{A})\dy&\bigcup_{\muc_0,\muc_1\in\ml{A}}\Gamma_{\vphi}^d(\muc_0,\muc_1), \\
    \Pi(\ml{A})\dy&\bigcup_{\muc_0,\muc_1\in\ml{A}}\Pi(\muc_0,\muc_1), \\
    \ml{M}\Gamma_{\vphi}^d(\ml{A})\dy&\ml{M}(\Gamma_{\vphi}^d(\ml{A}))
    \subseteq\ml{M}\Pi(\ml{A})\dy\ml{M}(\Pi(\ml{A}))\subseteq\gl(\Omega\times\Omega).
\end{align*}
\begin{theorem}\label{thm:two marginal equivalent}
    For each $i\in\{0,1\}$, let $\nuc_i=\sum_{j_i=1}^{J_i} \lambda_{i,j_i}\muc_{i,j_i}\in\ml{M}(\ml{A}),\lambdac_i=(\lambda_{i,1},\dots,\lambda_{i,J_i})\in\Delta^{J_i}$. Let us define
    \begin{equation}\label{eq:admissible mixture distance}
        \MWp_{\ml{A},\varphi}^d(\nuc_0,\nuc_1)\dy\vphi^{-1}\xkh{\inf_{\gammac\in\ml{M}\Pi(\ml{A})\cap\Pi(\nuc_0,\nuc_1)}\int_{\Omega\times\Omega}\vphi(d(\xc_0,\xc_1))\d\gammac(\xc_0,\xc_1)}.
    \end{equation}
    Then, under~\cref{ass:Identifiability},~\eqref{eq:admissible mixture distance} is equivalent to~\eqref{eq:Mixture distance}, i.e. $\MWp_{\ml{A},\varphi}^d(\nuc_0,\nuc_1)=W_{\ml{A},\varphi}(\nuc_0,\nuc_1)$ with $W\dy\Wp_\vphi^d$. Moreover, the sets of minimizers of~\eqref{eq:admissible mixture distance} is equal to the set of measures $\gammac$ which can be written as
    \begin{equation}\label{eq:mixture minimizer}
        \gammac=\sum_{j_0=1}^{J_0}\sum_{j_1=1}^{J_1}w_{j_0,j_1}^\ast\gammac_{j_0,j_1},
    \end{equation}
    with $\wc^\ast\dy(w_{j_0,j_1}^\ast)_{\substack{1\lee j_i\lee J_i\\
    i\in\{0,1\}}}$ a minimizer of~\eqref{eq:Mixture distance}, and $\gammac_{j_0,j_1}\in\Gamma_{\vphi}^d(\muc_{0,j_0},\muc_{1,j_1})$ for all $j_i\in\{1,\dots,J_i\},i\in\{0,1\}$.
\end{theorem}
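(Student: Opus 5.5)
The plan is to prove the two inequalities $\MWp_{\ml{A},\varphi}^d\lee W_{\ml{A},\varphi}$ and $\MWp_{\ml{A},\varphi}^d\gee W_{\ml{A},\varphi}$ separately, and to read off the description of the minimizers from the equality cases. Throughout we may assume all weights $\lambda_{i,j_i}>0$ and that the atoms $\muc_{i,1},\dots,\muc_{i,J_i}$ are pairwise distinct, merging repeated atoms if necessary.

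\emph{Upper bound.} Take a minimizer $\wc^\ast$ of~\eqref{eq:Mixture distance}; it exists by~\cref{thm:existence of infimum}, or simply by compactness of the polytope $\Pi(\lambdac_0,\lambdac_1)$. Take also $\gammac_{j_0,j_1}\in\Gamma_\vphi^d(\muc_{0,j_0},\muc_{1,j_1})$, which is nonempty by~\cref{thm:existence of infimum} since the atomic cost is finite by~\cref{pro:vphi wasserstein distance}. Set $\gammac=\sum w^\ast_{j_0,j_1}\gammac_{j_0,j_1}$. This $\gammac$ lies in $\ml{M}\Pi(\ml{A})$, and its first marginal is $\sum_{j_0}\big(\sum_{j_1}w^\ast_{j_0,j_1}\big)\muc_{0,j_0}=\nuc_0$; the same computation gives the second marginal $\nuc_1$. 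By linearity of the integral,
\[
\int\vphi(d)\d\gammac=\sum w^\ast_{j_0,j_1}\vphi(W(\muc_{0,j_0},\muc_{1,j_1})).
\]
Since $\vphi^{-1}$ is increasing, this gives $\MWp\lee W_{\ml{A},\vphi}$.

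\emph{Lower bound, the main obstacle.} Let $\gammac=\sum_{k=1}^K\alpha_k\gammac_k\in\ml{M}\Pi(\ml{A})\cap\Pi(\nuc_0,\nuc_1)$ with $\gammac_k\in\Pi(\muc^0_k,\muc^1_k)$ and $\muc^i_k\in\ml{A}$. Projecting onto each coordinate gives $\nuc_i=\sum_k\alpha_k\muc^i_k$. This is a second representation of $\nuc_i$ as an $\ml{A}$-mixture. After grouping equal atoms, \cref{ass:Identifiability} forces every $\muc^i_k$ to equal some $\muc_{i,j_i}$, and the grouped weights must match $\lambda_{i,j_i}$. Define
\[
w_{j_0,j_1}=\sum\{\alpha_k:\ \muc^0_k=\muc_{0,j_0},\ \muc^1_k=\muc_{1,j_1}\}.
\]
Identifiability then gives exactly $w\in\Pi(\lambdac_0,\lambdac_1)$. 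This matching step is the delicate point: one has to group before invoking identifiability, because that assumption is stated only for distinct atoms with positive weights. Next, $\int\vphi(d)\d\gammac_k\gee\vphi(W(\muc^0_k,\muc^1_k))$ by the definition of $W=\Wp^d_\vphi$. Summing over $k$,
\[
\int\vphi(d)\d\gammac\gee\sum w_{j_0,j_1}\vphi(W(\muc_{0,j_0},\muc_{1,j_1}))\gee\vphi(W_{\ml{A},\vphi}(\nuc_0,\nuc_1)^{})\,,
\]
where the last step uses $\vphi\circ\vphi^{-1}=\id$. Taking the infimum over $\gammac$ and applying $\vphi^{-1}$ yields equality of the two quantities. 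It also shows that the infimum in~\eqref{eq:admissible mixture distance} is attained.

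\emph{Characterization of minimizers.} The upper-bound construction shows that every $\gammac$ of the form~\eqref{eq:mixture minimizer} attains the common value, so it is a minimizer. Conversely, if $\gammac$ is a minimizer, both inequalities in the chain above are equalities. The second equality says that the induced $w$ is optimal for~\eqref{eq:Mixture distance}. The first equality, together with the termwise inequalities $\int\vphi(d)\d\gammac_k\gee\vphi(W(\muc^0_k,\muc^1_k))$, forces every $\gammac_k$ with $\alpha_k>0$ to be optimal, so $\gammac_k\in\Gamma^d_\vphi(\muc^0_k,\muc^1_k)$. Now group the $\gammac_k$ by the index pair $(j_0,j_1)$ and set $\gammac_{j_0,j_1}=w_{j_0,j_1}^{-1}\sum\alpha_k\gammac_k$ over that group. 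Each such average still lies in $\Gamma^d_\vphi(\muc_{0,j_0},\muc_{1,j_1})$: the set of optimal plans is convex, since the marginal constraints are linear and the cost functional is linear. With this choice $\gammac$ takes exactly the form~\eqref{eq:mixture minimizer}, with $\wc^\ast=w$.
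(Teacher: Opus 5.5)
Your proposal is correct and follows the same route as the paper: the upper bound comes from $\gammac^\ast=\sum w^\ast_{j_0,j_1}\gammac_{j_0,j_1}$, and the lower bound comes from regrouping the components of an admissible mixture coupling, via identifiability, into a discrete coupling $w\in\Pi(\lambdac_0,\lambdac_1)$. Your equality-case analysis spells out the characterization of minimizers that the paper only calls a straightforward consequence: each $\gammac_k$ with $\alpha_k>0$ must be optimal, and convexity of $\Gamma^d_\vphi$ lets you merge components that share the same index pair.
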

\begin{proof}
    Our proof combines the approaches of~\cite{delonWassersteinTypeDistanceSpace2020} and~\cite{dussonWassersteintypeMetricGeneric2026}, extending the latter's result from the specific case of $\vphi(x)=x^p$ with $p\gee1$ to the collection of functions $\MI$.
 
    Without loss of generality, we assume that the measures $\muc_{0,j_0}$ (respectively $\muc_{1,j_1}$) are all distinct, and that $\lambdac_i\in\Delta^{J_i}_+$ for all $i\in\{0,1\}$.
    First, let $\wc^\ast\dy(w^\ast_{j_0,j_1})_{\substack{1\lee j_i\lee J_i\\
    i\in\{0,1\}}}$ be a solution to~\eqref{eq:Mixture distance}. For any $\gammac_{j_0,j_1}\in\Gamma_{\vphi}^d(\muc_{0,j_0},\muc_{1,j_1}),j_i\in\{1,\dots,J_i\},i\in\{0,1\}$, let us define $\gammac^\ast\dy\sum_{\substack{1\lee j_i\lee J_i\\
    i\in\{0,1\}}}w^\ast_{j_0,j_1}\gammac_{j_0,j_1}$. There holds $\gammac^\ast\in\ml{M}\Gamma_{\vphi}^d(\ml{A})\cap\Pi(\nuc_0,\nuc_1)$. Moreover,
    \begin{align*}
        \vphi(\MWp_{\ml{A},\varphi}^d(\nuc_0,\nuc_1))&\lee \sum_{j_0=1}^{J_0}\sum_{j_1=1}^{J_1}w^\ast_{j_0,j_1}\int_{\Omega\times\Omega}\vphi(d(\xc_0,\xc_1))\d\gammac_{j_0,j_1}(\xc_0,\xc_1) \\
        &=\sum_{j_0=1}^{J_0}\sum_{j_1=1}^{J_1}w^\ast_{j_0,j_1}\vphi(\Wp_{\vphi}^d(\muc_{0,j_0},\muc_{1,j_1})) \\
        &=\vphi(W_{\ml{A},\varphi}(\nuc_0,\nuc_1)).
    \end{align*}
    Second, for any $\gammac\in\ml{M}\Pi(\ml{A})\cap\Pi(\nuc_0,\nuc_1)$. Then, there exists $J\in\zzs$ such that $\gammac\in\sum_{j=1}^J\lambda_j\gammac_j$ with $\gammac_j\in\Pi(\ml{A})$. Using that ${\proj_0}_{\#}\gammac=\nu_0$ and ${\proj_1}_{\#}\gammac=\nu_1$ where $\proj_i$ stand for the projection map $(x_0,\dots,x_i,\dots)\mapsto x_i$ for any $i\in\zrs$, we obtain
    \begin{equation*}
        \sum_{j=1}^J\lambda_j{\proj_0}_{\#}\gammac_j=\sum_{j_0=1}^{J_0}\lambda_{0,j_0}\muc_{0,j_0}.
    \end{equation*}
    Using the fact that for all $j\in\{1,\dots,J\}$, ${\proj_0}_{\#}\gammac_j\in\ml{A}$ and using the identifiability~\cref{ass:Identifiability}, we obtain that for each $j\in\{1,\dots,J\}$, there exists $j_0'\in\{1,\dots,J_0\}$ such that ${\proj_0}_{\#}\gammac_j=\muc_{0,j_0'}$, and the set $\Lambda_{0,j_0}\dy\{j:{\proj_0}_{\#}\gammac_j=\muc_{0,j_0},1\lee j\lee J\}$ is a nonempty set for each $j_0\in\{1,\dots,J_0\}$. Similarly, for each $j\in\{1,\dots,J\}$, there exists $j_1'\in\{1,\dots,J_1\}$ such that ${\proj_1}_{\#}\gammac_j=\muc_{1,j_1'},\lambda_j=\lambda_{1,j_1'}$, and the set $\Lambda_{1,j_1}\dy\{j:{\proj_1}_{\#}\gammac_j=\muc_{1,j_1},1\lee j\lee J\}$ is a nonempty set for each $j_1\in\{1,\dots,J_1\}$. Now for each $j_i\in\{1,\dots,J_i\},i\in\{0,1\}$, we can define $w_{j_0,j_1}'\dy\sum_{j\in\Lambda_{0,j_0}\cap\Lambda_{1,j_1}}\lambda_j$, and
    \begin{equation*}
        \gammac_{j_0,j_1}'\dy\begin{cases}
            \frac{1}{w'_{j_0,j_1}}\sum_{j\in\Lambda_{0,j_0}\cap\Lambda_{1,j_1}}\gammac_j, & \mbox{if } w_{j_0,j_1}'>0;\\
            \gammac_{j_0,j_1}, & \mbox{if } w_{j_0,j_1}'=0.
        \end{cases}\in\Pi(\muc_{0,j_0},\muc_{1,j_1}).
    \end{equation*}
    Then $\gammac=\sum_{j_0=1}^{J_0}\sum_{j_1=1}^{J_1}w'_{j_0,j_1}\gammac'_{j_0,j_1}$ and $(w'_{j_0,j_1})_{\substack{1\lee j_i\lee J_i\\
    i\in\{0,1\}}}\in\Pi(\lambdac_0,\lambdac_1)$.
    Thus,
    \begin{align*}
        \int_{\Omega\times\Omega}\vphi(d(\xc_0.\xc_1))\d\gammac(\xc_0.\xc_1)&=\sum_{j_0=1}^{J_0}\sum_{j_1=1}^{J_1}w'_{j_0,j_1}\int_{\Omega\times\Omega}\vphi(d(\xc_0.\xc_1))\d\gammac'_{j_0,j_1}(\xc_0.\xc_1) \\
        &\gee\sum_{j_=1}^{J}w'_j \vphi(\Wp^d_{\varphi}(\muc_{0,j_0'},\muc_{1,j_1'})) \\
        &\gee\vphi(\Wp_{\ml{A},\varphi}(\nuc_0,\nuc_1)),
    \end{align*}
    which conclude $\MWp_{\ml{A},\varphi}^d(\nuc_0,\nuc_1)\gee\Wp_{\ml{A},\varphi}(\nuc_0,\nuc_1)$. 
    
    Finally, $\MWp_{\ml{A},\varphi}^d(\nuc_0,\nuc_1)=W_{\ml{A},\varphi}(\nuc_0,\nuc_1)$, and~\eqref{eq:mixture minimizer} is then a straightforward consequence of this proof.
\end{proof}

\subsection{Multimarginal problems}
When the atomic metric $W$ lacks a Euclidean structure, the mere issue of defining the barycenter of points becomes a challenging task. So for the general multimarginal problems, let us define the corresponding barycenter firstly.
\begin{lemma}[Borel barycenter map]\label{pro:Borel barycenter map}
    Let $\vphi\in\MI$ and $(\ml{X},d)$ be a locally compact Polish space. Then, given any integer $K\gee2$ and weights $\tauc\dy(\tau_k)_{0\lee k\lee K-1}\in\Delta^{K}$, there exists a Borel map $T_B^{\tauc}:\ml{X}^K\to\ml{X}$ that associate $(\xc_k)_{0\lee k\lee K-1}$ to a minimum of $\xc\mapsto\sum_{k=0}^{K-1}\tau_k\vphi(d(\xc,\xc_k))$. Such maps will be called {\rm Borel barycenter map}.

    Moreover, if for all $\xc\in\ml{X}$, $\yc\mapsto\vphi(d(\xc,\yc))$ is a strictly convex function with respect to $\yc$ on $\ml{X}$, then $T_B^{\tauc}$ is a continuous function on $\ml{Z}\dy\ml{X}^K$ and uniquely determined, where the product metric $d_\ml{Z}$ is defined by any norm $h$ on $\ffss^K$ such that
    \begin{equation*}
        d_\ml{Z}(\Xc,\Xc')\dy h(d(\xc_0,\xc_0'),\dots,d(\xc_{K-1},\xc_{K-1}')),
    \end{equation*}
    for all $\Xc\dy(\xc_0,\dots,\xc_{K-1}),\Xc'\dy(\xc_0',\dots,\xc_{K-1}')\in \ml{Z}$.
\end{lemma}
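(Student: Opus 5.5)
Set $f(\xc;\Xc)\dy\sum_{k=0}^{K-1}\tau_k\vphi(d(\xc,\xc_k))$ for $\Xc=(\xc_0,\dots,\xc_{K-1})\in\ml{Z}$. The plan has three parts: show that a minimizer exists for each $\Xc$, show that the minimizer set depends upper semicontinuously on $\Xc$, and then apply a measurable selection theorem. Under strict convexity, the selection will be unique and continuous.

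\emph{Existence of minima.} Since $\vphi$ is an increasing bijection of $\ffss$, it is continuous, so $f(\cdot;\Xc)$ is continuous and jointly continuous in $(\xc,\Xc)$. Fix $k_0$ with $\tau_{k_0}>0$. Then $f(\xc;\Xc)\to+\infty$ as $d(\xc,\xc_{k_0})\to\infty$, and the sublevel set $\{\xc: f(\xc;\Xc)\lee f(\xc_{k_0};\Xc)\}$ lies in a closed ball around $\xc_{k_0}$. To conclude that this closed sublevel set is compact, closed balls must be compact. Local compactness plus Polishness does not give this in general, so I would assume (or invoke, as the paper's setting implicitly does, e.g.\ for $\ssyn$ or for geodesic spaces via Hopf--Rinow) that $\ml{X}$ is proper. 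The minimum is then attained, and on a neighbourhood of any $\Xc^0$ the minimizers stay in a fixed compact ball. I expect this properness issue to be the main obstacle, and I would state it explicitly.

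\emph{Measurable selection.} Let $\Phi(\Xc)\dy\argmin_\xc f(\xc;\Xc)$. This is a nonempty compact set, and its graph $\{(\Xc,\xc): f(\xc;\Xc)=\min_\yc f(\yc;\Xc)\}$ is closed. The map $\Xc\mapsto\min f(\cdot;\Xc)$ is continuous: it is upper semicontinuous as an infimum of continuous functions, and lower semicontinuous because minimizers remain in a local compact set. Hence $\Phi$ is an upper hemicontinuous, compact-valued correspondence, and so is Borel measurable. The Kuratowski--Ryll-Nardzewski selection theorem then gives a Borel selector $T_B^{\tauc}$. Alternatively, one can take a countable dense set and define the selector as the lexicographically first limit, or use the "measurable maximum theorem" of Aliprantis--Border directly.

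\emph{Strictly convex case.} If $\yc\mapsto\vphi(d(\xc,\yc))$ is strictly convex (which presupposes a geodesic/convex structure on $\ml{X}$), then $f(\cdot;\Xc)$ is strictly convex as a positive combination including $\tau_{k_0}>0$. The minimizer is therefore unique, so $\Phi$ is single-valued and $T_B^{\tauc}$ is uniquely determined. Continuity then follows by a sequential argument. Take $\Xc^\ell\to\Xc$ in $d_\ml{Z}$; since all norms $h$ on $\ssy^K$ are equivalent, this is just coordinatewise convergence. The points $T_B^{\tauc}(\Xc^\ell)$ lie in a compact ball, so every subsequence has a further convergent subsequence with limit $\yc$. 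Passing to the limit in $f(T_B^{\tauc}(\Xc^\ell);\Xc^\ell)\lee f(\zc;\Xc^\ell)$ for all $\zc$, using joint continuity, shows that $\yc$ minimizes $f(\cdot;\Xc)$. By uniqueness $\yc=T_B^{\tauc}(\Xc)$, so the whole sequence converges.
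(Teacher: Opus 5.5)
Your proof is correct once the properness hypothesis you flag is added, but it takes a different route from the paper in both halves.

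\textbf{Borel selection.} The paper applies Zimmer's Theorem A.5 directly to the closed set $A=\{(\Xc,\xc): f_{\tauc}(\xc,\Xc)\le f_{\tauc}(\zc,\Xc)\ \text{for all } \zc\}$. This gives a Borel section of the projection $\pi_{\mathcal{Z}}$ over $\pi_{\mathcal{Z}}(A)$. You instead prove that the value function is continuous, deduce that the argmin correspondence is compact-valued and upper hemicontinuous, and then apply Kuratowski--Ryll-Nardzewski.

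\textbf{Continuity.} In the strictly convex case the paper argues quantitatively. The generalized Minkowski inequality gives the bound $|\varphi^{-1}(f_{\tauc}(\yc,\Xc))-\varphi^{-1}(f_{\tauc}(\yc,\Xc'))|\le\varphi^{-1}\bigl(\sum_k\tau_k\varphi(d(\xc_k,\xc_k'))\bigr)$, uniformly in $\yc$. This is then played against a gap $\varepsilon_1>0$ between the minimum value and the values at points at distance at least $\varepsilon$ from the minimizer. You use a Berge-type argument: compactness, then passage to the limit, then uniqueness.

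\textbf{Comparison.} The paper's route gives an explicit modulus for the value function and genuinely uses $\varphi\in\MI$. Yours needs only continuity and coercivity of $\varphi$, and it shows exactly where compactness enters. The paper asserts its gap $\varepsilon_1$ without proof. Justifying it needs precisely what you assume: compactness of spheres around the minimizer, together with convexity along geodesics.

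\textbf{Properness is necessary.} Your caution here is not a mere technicality: the first assertion is false for general locally compact Polish spaces. Take $\mathcal{X}=\{a,b\}\cup\{p_n\}_{n\ge 1}$ with $d(a,b)=d(p_n,p_m)=2$ for $n\neq m$, and $d(a,p_n)=d(b,p_n)=1+1/n$. All nonzero distances lie in $[1,2]$, so this is a discrete, complete, countable metric space. Use $\varphi(t)=t^2$, $\tauc=(1/2,1/2)$ and $(\xc_0,\xc_1)=(a,b)$. The objective equals $2$ at $a$ and at $b$, and $(1+1/n)^2$ at $p_n$. So its infimum $1$ is not attained, and $\pi_{\mathcal{Z}}(A)$ is a proper subset of $\mathcal{Z}$. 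Zimmer's theorem therefore does not by itself give a map defined on all of $\mathcal{Z}$. Some hypothesis such as properness is needed; it holds automatically when $\mathcal{X}$ is a complete, locally compact geodesic space, by Hopf--Rinow, which is the setting of Le Gouic--Loubes. In the strictly convex part this is implicitly available, since strict convexity on $\mathcal{X}$ presupposes a geodesic structure.
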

\begin{proof}
    The first part of this proof extends the argument of~\cite{legouicExistenceConsistencyWasserstein2017} to our generalized setting. For simplicity, we denote
    \begin{equation*}
        f_{\tauc}(\yc,\Xc)\dy\sum_{k=0}^{K-1}\tau_k\vphi(d(\yc,\xc_k)),\quad\forall\yc\in\ml{X},\Xc\in \ml{Z}.
    \end{equation*}
    Since $(\ml{X},d)$ is a locally compact Polish space, applying Theorem A.5 in~\cite{zimmerErgodicTheorySemisimple1984} with
    \begin{equation*}
        A\dy\dkh{(\xc_0,\dots,\xc_{K-1},\xc)\in \ml{Z}\times {\ml{X}}:f_{\tauc}(\xc,\Xc)\lee f_{\tauc}(\zc,\Xc),\forall\zc\in {\ml{X}}}.
    \end{equation*}
    shows the existence of a Borel section $f$ from $\pi_{\ml{Z}}(A)$ to $\ml{Z}\times {\ml{X}}$ of the projection $\pi_{\ml{Z}}:\ml{Z}\times {\ml{X}}\to \ml{Z}$. Then $T_B^{\tauc}\dy\pi_{\ml{X}}\circ f$ is a Borel barycenter map, where $\pi_{\ml{X}}:\ml{Z}\times {\ml{X}}\to {\ml{X}}$ denotes the projection.

    Secondly, if $\yc\mapsto\vphi(d(\xc,\yc))$ is a strictly convex function for all $\xc,\yc\in\ml{X}$, then $f_{\tauc}(\yc,\Xc)$ is a strictly convex continuous function with respect to $\yc$ on $\ml{X}$, so its global minimum point is unique. We therefore can define
    \begin{equation*}
        T_B^{\tauc}(\Xc)\dy\argmin_{\yc\in\ml{X}}f_{\tauc}(\yc,\Xc).
    \end{equation*}
    For fixed $\Xc\in \ml{Z},\varepsilon>0$, and any $\Xc'\dy(\xc_0',\dots,\xc_{K-1}')\in \ml{Z}$, we denote $\zc\coloneqq T_B^{\tauc}(\Xc),\zc'\coloneqq T_B^{\tauc}(\Xc')$. If $d(\zc,\zc')\gee\varepsilon$, since the strict convexity of $f_{\tauc}(\,\cdot\,,\Xc)$ and $\vphi\in\MI$, then there exists $\varepsilon_1>0$ such that $\vphi^{-1}(f_{\tauc}(\zc',\Xc))>\vphi^{-1}(f_{\tauc}(\zc,\Xc))+\varepsilon_1$. According to~\cref{pro:generalized minkowski}, for any $\yc\in {\ml{X}}$,
    \begin{equation*}
        \jdz{\vphi^{-1}(f_{\tauc}(\yc,\Xc))-\vphi^{-1}(f_{\tauc}(\yc,\Xc'))}\lee\vphi^{-1}\xkh{\sum_{k=0}^{K-1}\tau_k\vphi(d(\xc_k,\xc_k')))}.
    \end{equation*}
    It is straightforward to verify that $d_\ml{Z}$ defines a norm on $\ml{Z}$. By the equivalence of norms in finite dimensions, there exists $\varepsilon_2>0$ such that for any $d_\ml{Z}(\Xc,\Xc')<\varepsilon_2$,
    \begin{equation*}
        \max_{k\in\{0,\dots,K-1\}}{d(\xc_k,\xc_k')}<\vphi^{-1}\xkh{\frac{1}{K}\vphi(\frac{\varepsilon_1}{3})},
    \end{equation*}
    which implies
    \begin{equation*}
        \vphi^{-1}\xkh{\sum_{k=0}^{K-1}\tau_k\vphi(d(\xc_k,\xc_k')))}<\frac{\varepsilon_1}{3}.
    \end{equation*}
    Now if there exists $\Xc'\in \ml{Z}$ such that $d_\ml{Z}(\Xc,\Xc')<\varepsilon_2$ and $d(\zc,\zc')\gee\varepsilon$, then
    \begin{equation*}
        \frac{2\varepsilon_1}{3}>\vphi^{-1}(f_{\tauc}(\zc',\Xc))-\vphi^{-1}(f_{\tauc}(\zc,\Xc))+\vphi^{-1}(f_{\tauc}(\zc,\Xc'))-\vphi^{-1}(f_{\tauc}(\zc',\Xc'))>\varepsilon_1.
    \end{equation*}
    This is a contradiction, so $d_\ml{Z}(\Xc,\Xc')<\varepsilon_2$ implies $d(\zc,\zc')<\varepsilon$, therefore $T_B^{\tauc}$ is a continuous function and uniquely determined.
\end{proof}
\begin{theorem}[Barycenter and multimarginal problem]\label{thm:Barycenter and multimarginal problem}
    Let $(\ml{X},d)$ be a locally compact Polish space, and $\vphi\in\MI$. Given an integer $K\gee2$, $(\muc_k)_{0\lee k\lee K-1}\in\gl_\vphi^d(\ml{X})^K$ and weights $\tauc\dy(\tau_k)_{0\lee k\lee K-1}\in\Delta^{K}$, there exists a measure $\gammac_B\in\Gamma(\muc_0,\dots,\muc_{K-1})$ minimizing
    \begin{equation*}
        \gammac\mapsto\inf_{\xc\in\ml{X}}\int_{\ml{X}^K}\sum_{k=0}^{K-1}\tau_k\vphi(d(\xc,\xc_k))\d\gammac(\xc_0,\dots,\xc_{K-1}).
    \end{equation*}
    Moreover, let $T_B^{\tauc}:\ml{X}^K\to\ml{X}$ be a Borel barycenter map (as in~\cref{pro:Borel barycenter map}). Then, the measure $\muc_B\dy {T_B^{\tauc}}_{\#}\gammac_B$ is a barycenter of $(\muc_k)_{0\lee k\lee K-1}$. Furthermore, if this map is unique, any barycenter $\muc_B$ is of the form $\muc_B={T_B^{\tauc}}_{\#}\gammac_B$.
\end{theorem}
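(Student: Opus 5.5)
This follows the classical argument of Agueh--Carlier and Le Gouic--Loubes, with the cost $\vphi\circ d$ in place of $d^p$. Set
\begin{equation*}
    c(\xc_0,\dots,\xc_{K-1})\dy\inf_{\xc\in\ml{X}}\sum_{k=0}^{K-1}\tau_k\vphi(d(\xc,\xc_k))=\sum_{k=0}^{K-1}\tau_k\vphi\big(d(T_B^{\tauc}(\xc_0,\dots,\xc_{K-1}),\xc_k)\big).
\end{equation*}
The equality holds by \cref{pro:Borel barycenter map}. I read the functional in the statement as $\gammac\mapsto\int c\,\d\gammac$, with the infimum inside the integral. The proof has three steps: existence of a minimizer $\gammac_B$; the identity $\min_{\gammac}\int c\,\d\gammac=\inf_{\nuc}\sum_k\tau_k\vphi(\Wp^d_\vphi(\nuc,\muc_k))$; and the barycenter characterization.

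\emph{Existence.} The function $c$ is nonnegative. It is continuous as an infimum of continuous functions locally uniformly in $\Xc$; at minimum it is upper semicontinuous, and lower semicontinuity comes from local compactness, since near-minimizers stay in a compact set when the $\xc_k$ vary locally. Its cost is finite: take $\xc=\xc_0$ in the infimum, so that $c\lee\sum_k\tau_k\vphi(d(\xc_0,\xc_k))$, which is integrable under any coupling. Indeed, by \cref{pro:generalized integral minkowski} and the argument of \cref{pro:vphi wasserstein distance}, $\vphi(d(\xc_0,\xc_k))$ is integrable because each $\muc_k\in\gl^d_\vphi(\ml{X})$. \Cref{thm:existence of infimum} with $\ml{J}=\Pi(\muc_0,\dots,\muc_{K-1})$ and $u_k\equiv0$ then gives $\gammac_B$.

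\emph{Lower bound.} Fix any $\nuc\in\gl^d_\vphi(\ml{X})$. For each $k$, take an optimal plan in $\Gamma^d_\vphi(\nuc,\muc_k)$, and glue these $K$ plans along the common marginal $\nuc$ (multi-gluing lemma). This gives a law of $(\Yc,\Xc_0,\dots,\Xc_{K-1})$ with $\law(\Yc)=\nuc$ and $\law(\Xc_k)=\muc_k$. Then
\begin{equation*}
    \sum_k\tau_k\vphi(\Wp^d_\vphi(\nuc,\muc_k))=\md{E}\Big[\sum_k\tau_k\vphi(d(\Yc,\Xc_k))\Big]\gee\md{E}[c(\Xc_0,\dots,\Xc_{K-1})]\gee\int c\,\d\gammac_B.
\end{equation*}

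\emph{Upper bound.} Let $\muc_B\dy {T_B^{\tauc}}_\#\gammac_B$. The plan $(T_B^{\tauc},\proj_k)_\#\gammac_B$ couples $\muc_B$ with $\muc_k$, so
\begin{equation*}
    \sum_k\tau_k\vphi(\Wp^d_\vphi(\muc_B,\muc_k))\lee\int\sum_k\tau_k\vphi(d(T_B^{\tauc},\xc_k))\,\d\gammac_B=\int c\,\d\gammac_B.
\end{equation*}
One must also check $\muc_B\in\gl^d_\vphi(\ml{X})$. Since $\tau_k\vphi(d(T_B^{\tauc}(\Xc),\xc_k))\lee c(\Xc)$, which is integrable, we get $\int\vphi(d(\cdot,\xc_k))\,\d\muc_B<\infty$ for every $k$ with $\tau_k>0$. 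Weights $\tau_k=0$ can be dropped at the start. Combining the two bounds, $\muc_B$ attains the infimum, so it is a barycenter.

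\emph{Converse.} Suppose $T_B^{\tauc}$ is unique and let $\nuc$ be any barycenter. Then all inequalities in the lower-bound chain are equalities. Hence the glued law of $\Xc\dy(\Xc_0,\dots,\Xc_{K-1})$ is an optimal plan $\gammac_B$, and $\sum_k\tau_k\vphi(d(\Yc,\Xc_k))=c(\Xc)$ almost surely. So $\Yc$ almost surely minimizes $f_{\tauc}(\cdot,\Xc)$, and uniqueness of that minimizer forces $\Yc=T_B^{\tauc}(\Xc)$. Therefore $\nuc={T_B^{\tauc}}_\#\gammac_B$.

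I expect the main obstacles to be technical. First, the semicontinuity and measurability of $c$ on a merely locally compact space, which I would handle through the Borel selection already built in \cref{pro:Borel barycenter map}. Second, the multi-marginal gluing, which I would do iteratively by conditioning on $\Yc$, i.e.\ disintegrating each optimal plan with respect to $\nuc$.
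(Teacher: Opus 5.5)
Your proposal is correct and follows essentially the same argument as the paper. Existence comes from \cref{thm:existence of infimum}, the upper bound from the couplings $(T_B^{\tauc},\proj_k)_\#\gammac_B$, the lower bound from gluing optimal plans along $\nuc$ via disintegration, and the converse from the equality case; you are also right to place the infimum inside the integral, which is what the paper's proof actually uses. The one step to tighten is the semicontinuity of $c$. Your claim that near-minimizers stay in a compact set can fail in a locally compact Polish space that is not proper, so use instead the uniform estimate $\jdz{\vphi^{-1}(f_{\tauc}(\yc,\Xc))-\vphi^{-1}(f_{\tauc}(\yc,\Xc'))}\lee\vphi^{-1}\xkh{\sum_{k}\tau_k\vphi(d(\xc_k,\xc_k'))}$ from \cref{pro:generalized minkowski}, which makes $\vphi^{-1}\circ c$, and hence $c$, continuous.
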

\begin{proof}
    This proof extends the argument of~\cite{aguehBarycentersWassersteinSpace2011,legouicExistenceConsistencyWasserstein2017} to our setting. Under the assumptions that $\varphi\in\MI$ and that $\ml{X}$ is locally compact, the existence of a solution to the multimarginal problem follows directly from~\cref{thm:existence of infimum}.

    Let $\gammac_B$ be a solution of the multimarginal problem, and set $\muc_B \coloneqq {T_B^{\tauc}}_{\#}\gammac_B$. For simplicity, we write $\Xc \coloneqq (\xc_0, \dots, \xc_{K-1})$. Then, by the definition of the Wasserstein distance associated with $\varphi$, for each $k\in\{0,\dots,K-1\}$,
    \begin{equation*}
        \vphi(\Wp_\vphi^d(\muc_k,\muc_B))\lee\int_{\ml{X}^K}\vphi(d(\xc_k,T_B^{\tauc}(\Xc)))\d\gammac_B(\Xc),
    \end{equation*}
    and consequently,
    \begin{equation}\label{eq:01}
        \sum_{k=0}^{K-1}\tau_k\vphi(\Wp_\vphi^d(\muc_k,\muc_B))\lee\int_{\ml{X}^K}\sum_{k=0}^{K-1}\tau_k\vphi(d(\xc_k,T_B^{\tauc}(\Xc)))\d\gammac_B(\Xc).
    \end{equation}

    Furthermore, for any $\nuc\in\gl_\varphi^d(\ml{X})^K$, let $\pi_k\in\Gamma(\muc_k,\nuc_k)$ be an optimal transport plan between $\muc_k$ and $\nuc$ for each $k\in\{0,\dots,K-1\}$.
    By the disintegration theorem, for any $k\in\{0,\dots,K-1\}$, there exists a family of conditional measures $\{\muc_k^{\yc}\}$ defined for $\nuc$-almost every $\yc$, which satisfies $\pi_k(\d \xc_k, \d \yc) = \muc_k^{\yc}(\d \xc_k) \nuc(\d \yc)$. We then set,
    \begin{equation*}
        \thetac(\d\Xc,\d\yc)\dy\muc_0^{\yc}(\d\xc_0)\times\dots\times\muc_{K-1}^{\yc}(\d\xc_{K-1})\times \nuc(\d\yc),
    \end{equation*}
    and let $\gammac'$ be the law of the $K$ first marginals of $\thetac$. Then, by the construction of $\thetac$,
    \begin{align}
        \sum_{k=0}^{K-1}\tau_k\vphi(\Wp_\vphi^d(\muc_k,\nuc))=&\sum_{k=0}^{K-1}\tau_k\int_{\ml{X}^{K+1}}\vphi(d(\xc_k,\yc)))\d\thetac(\Xc,\yc) \nonumber\\
        \gee&\sum_{k=0}^{K-1}\tau_k\inf_{\zc\in\ml{X}}\int_{\ml{X}^{K+1}}\vphi(d(\xc_k,\zc)))\d\thetac(\Xc,\yc) \nonumber\\
        =&\sum_{k=0}^{K-1}\tau_k\int_{\ml{X}^{K+1}}\vphi(d(\xc_k,T_B^{\tauc}(\Xc))))\d\thetac(\Xc,\yc) \label{eq:02}\\
        =&\sum_{k=0}^{K-1}\tau_k\int_{\ml{X}^{K}}\vphi(d(\xc_k,T_B^{\tauc}(\Xc))))\d\gammac'(\Xc) \nonumber\\
        \gee&\int_{\ml{X}^K}\sum_{k=0}^{K-1}\tau_k\vphi(d(\xc_k,T_B^{\tauc}(\Xc)))\d\gammac_B(\Xc) \nonumber\\
        \gee&\sum_{k=0}^{K-1}\tau_k\vphi(\Wp_\vphi^d(\muc_k,\muc_B)). \nonumber
    \end{align}
    where the last inequality follows from~\eqref{eq:01}. Since $\nuc$ is arbitrary, we conclude that ${T_B^{\tauc}}_{\#}\gammac_B$ is a barycenter.

    Moreover, by choosing $\nuc$ to be a barycenter,~\eqref{eq:02} becomes an equality, which implies that for $\thetac$-almost every $(\Xc, \yc)\in\ml{X}^K\times\ml{X}$,
    \begin{equation*}
        \sum_{k=0}^{K-1}\tau_k\vphi(d(\xc_k,\yc)))=\inf_{\zc\in\ml{X}}\sum_{k=0}^{K-1}\tau_k\vphi(d(\xc_k,\zc)))=\sum_{k=0}^{K-1}\tau_k\vphi(d(\xc_k,T_B^{\tauc}(\Xc)))).
    \end{equation*}
    Thus, if the barycenter map $T_B^{\tauc}$ is unique, then $T_B^{\tauc}(\Xc)=\yc$ for $\thetac$-almost everywhere, which implies that ${T_B^{\tauc}}_{\#}\gammac' = \nuc$. Furthermore, the optimality of $\nuc$, together with~\eqref{eq:01}, shows that $\nuc$ is a solution to the multimarginal problem.
\end{proof}

Now the theory naturally extends to multimarginal problems by the following assumption.
\begin{assumption}\label{ass:unique continuous barycenter}
    Let $(\ml{X},d)$ be a locally compact Polish space whose Borel barycenter map $T_B^{\tauc}$ is uniquely determined and continuous function for any integer $K\gee2$ and any weight vector $\tauc=(\tau_k)_{0\lee k\lee K-1}\in\Delta^{K}$.
\end{assumption}
\begin{definition}
    Let $(\ml{X},d)$ satisfy~\cref{ass:unique continuous barycenter}. For any integer $K\gee 2$, $\varphi\in\MI$, $\tauc\coloneqq(\tau_k)_{0\lee k \lee K-1}\in\Delta^{K}$, and $\muc_0,\dots,\muc_{K-1}\in\gl(\ml{X})$, we define
    \begin{equation}\label{eq:multimarginal problem}
        \Wp_{\vphi}^{d,\tauc}(\muc_0,\dots,\muc_{K-1})\dy\vphi^{-1}\xkh{\inf_{\gammac\in\Pi(\muc_0,\dots,\muc_{K-1})}c_\vphi^{d,\tauc}(\Xc)\d\gammac(\Xc)},
    \end{equation}
    where $\Xc=(\xc_0,\dots,\xc_{K-1})\in\ml{X}^{K}$ and
    \begin{equation}
        c_\vphi^{d,\tauc}(\Xc)\dy\sum_{k=0}^{K-1}\tau_k\vphi(d(\xc_k,T_B^{\tauc}(\xc_0,\dots,\xc_{K-1}))).
    \end{equation}
    Since $T_B$ is continuous, then, by~\cref{thm:existence of infimum}, there exists at least one solution to the above problem. The set of minimizers of~\eqref{eq:multimarginal problem} is denoted by $\Gamma_{\varphi}^{d,\tauc}(\muc_0,\dots,\muc_{K-1})$.
\end{definition}

Let $\ml{A}\subseteq\gl(\Omega)$ be a dictionary of atoms. Similarly, we introduce further notation:
\begin{align*}
    \Gamma_{\vphi,K}^{d,\tauc}(\ml{A})\dy&\bigcup_{\muc_0,\dots,\muc_{K-1}\in\ml{A}}\Gamma_{\vphi}^{d,\tauc}(\muc_0,\dots,\muc_{K-1}), \\
    \Pi_{K}(\ml{A})\dy&\bigcup_{\muc_0,\dots,\muc_{K-1}\in\ml{A}}\Pi(\muc_0,\dots,\muc_{K-1}), \\
    \ml{M}\Gamma_{\vphi,K}^{d,\tauc}(\ml{A})\dy&\ml{M}(\Gamma_{\vphi,K}^{d,\tauc}(\ml{A}))
    \subseteq\ml{M}\Pi_{K}(\ml{A})\dy\ml{M}(\Pi_{K}(\ml{A}))\subseteq\gl(\Omega^{K}).
\end{align*}

Following an approach similar to that in the previous subsection, we define the multimarginal transport problem.
\begin{definition}
    Let $(\ml{X},d)$ satisfy~\cref{ass:unique continuous barycenter}, and let $\ml{A}\subseteq\gl(\ml{X})$ be a dictionary of atoms. For any integer $K\gee2$, $\vphi\in\MI$, $\tauc\dy(\tau_k)_{0\lee\tau\lee K-1}\in\Delta^{K}$, and $\nuc_0,\dots,\nuc_{K-1}\in\ml{M}(\ml{A})$, we define
    \begin{equation}\label{eq:admissible multimarginal mixture}
        \MWp_{\ml{A},\vphi}^{d,\tauc}(\nuc_0,\dots,\nuc_{K-1})\dy\vphi^{-1}\xkh{\inf_{\gammac\in\ml{M}\Pi_{K}(\ml{A})\cap\Pi(\nuc_0,\dots,\nuc_{K-1})}c_\vphi^{d,\tauc}(\Xc)\d\gammac(\Xc)},
    \end{equation}
    where $\Xc=(\xc_0,\dots,\xc_{K-1})\in\ml{X}^{K}$.
\end{definition}

To state the corresponding result, we fix the integer $K\gee2$ and any integers $J_0,\dots,J_{K-1}\in\zzs$. We then define the multi-index set
\begin{equation*}
    \mgt{J} \coloneqq \{0, \dots, J_0\} \times \dots \times \{0, \dots, J_{K-1}\},
\end{equation*}
and denote its elements by $\jc = (j_0, \dots, j_{K-1}) \in \mgt{J}$.

\begin{theorem}\label{thm:multimarginal equivalent}
    Under the notation established above. Let $(\ml{X},d)$ satisfy~\cref{ass:unique continuous barycenter}, and let $\ml{A}\subseteq\gl(\ml{X})$ be a dictionary of atoms. For each $i\in\{0,\dots,K-1\}$ and $j_i\in\{1,\dots,J_i\}$, let $\muc_{i,j_i}\in\ml{A}$, $\nuc_i=\sum_{j_i=1}^{J_i} \lambda_{i,j_i}\muc_{i,j_i}\in\ml{M}(\ml{A})$, and $\lambdac_i=(\lambda_{i,1},\dots,\lambda_{i,J_i})\in\Delta^{J_i}$.
    Under~\cref{ass:Identifiability}, it holds that~\eqref{eq:admissible multimarginal mixture} is equivalent to the following discrete problem
    \begin{equation}\label{eq:multimarginal mixture distance}
        W_{\ml{A},\vphi}^{d,\tauc}(\nuc_0,\dots,\nuc_{K-1})\dy\vphi^{-1}\xkh{\inf_{\wc\in\Pi(\lambdac_0,\dots,\lambdac_{K-1})}\sum_{\jc\in\mgt{J}}w_{\jc}\vphi(\Wp_{\vphi}^{d,\tauc}(\muc_\jc))},
    \end{equation}
    where $w_\jc \coloneqq w_{j_0,\dots,j_{K-1}}$, $\muc_\jc\dy(\muc_{0,j_0},\dots,\muc_{K-1,j_{K-1}})$, and
    \begin{align*}
        \Pi(\lambdac_0,\dots,\lambdac_{K-1})\dy\bigg\{\wc=(w_{j_0,\dots,j_{K-1}})_{
        \substack{
        0\lee i\lee K-1\\
        1\lee j_i\lee J_i}
        }\in\ffss^{J_0\times\dots\times J_{K-1}}:
        \qquad\qquad\qquad\\
        \sum_{\substack{
        i'\neq i\\
        j_{i'}\in\{1,\dots,J_{i'}\}}}w_{j_0,\dots,j_{K-1}}=\lambda_{i,j_i},\forall i\in\{0,\dots,K-1\},j_{i}\in\{1,\dots,J_{i}\}\bigg\},
    \end{align*}
    i.e. $\MWp_{\ml{A},\vphi}^{d,\tauc}(\nuc_0,\dots,\nuc_{K-1})=W_{\ml{A},\vphi}^{d,\tauc}(\nuc_0,\dots,\nuc_{K-1})$. As a consequence, any minimizer of~\eqref{eq:admissible multimarginal mixture} can be written as
    \begin{equation}\label{eq:multimarginal mixture solution}
        \gammac^\ast\dy\sum_{\jc\in\mgt{J}}w^\ast_\jc\gammac^\ast_\jc,
    \end{equation}
    where $(w^\ast_\jc)_{\jc\in\mgt{J}}$ is a minimizer of~\eqref{eq:multimarginal mixture distance} and $\gammac^\ast_\jc\in\Gamma_{\vphi}^{d,\tauc}(\muc_\jc)$ for all $\jc\in\mgt{J}$. Moreover, any minimizer $(w^\ast_\jc)_{\jc\in\mgt{J}}$ to~\eqref{eq:admissible multimarginal mixture} contains at most $J_0+\dots+J_{K-1}-K+1$ nonzero components.
    Finally, any barycenter of $(\muc_0,\dots,\muc_{K-1})$ with barycentric weights $\tauc$ can be written as
    \begin{equation}\label{eq:barycenter mixture}
        \Bary_\tauc(\nuc_0,\dots,\nuc_{K-1})=\sum_{\jc\in\mgt{J}}w^\ast_\jc\Bary_\tauc(\muc_\jc),
    \end{equation}
    where $\Bary_\tauc(\muc_\jc)$ is the barycenter of $\muc_\jc$ with barycentric weights $\tauc$, i.e. $\Bary_\tauc(\muc_\jc)\dy {T_B^{\tauc}}_{\#}\gammac^\ast_\jc$ for all $\jc\in\mgt{J}$.
\end{theorem}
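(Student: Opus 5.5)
The argument mirrors the proof of \cref{thm:two marginal equivalent}, now with $K$ marginals. We prove two inequalities between \eqref{eq:admissible multimarginal mixture} and \eqref{eq:multimarginal mixture distance}, and then derive the structural consequences.

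\textbf{Upper bound.} Take a minimizer $\wc^\ast\in\Pi(\lambdac_0,\dots,\lambdac_{K-1})$ of \eqref{eq:multimarginal mixture distance}. Such a minimizer exists because the feasible set is a compact polytope and the objective is linear. For each $\jc$, choose $\gammac^\ast_\jc\in\Gamma_\vphi^{d,\tauc}(\muc_\jc)$; this set is nonempty by \cref{thm:existence of infimum}, since $T_B^{\tauc}$ is continuous under \cref{ass:unique continuous barycenter}. Set $\gammac^\ast=\sum_\jc w^\ast_\jc\gammac^\ast_\jc$. Its $i$-th marginal is $\sum_{j_i}\big(\sum_{i'\neq i}w^\ast_\jc\big)\muc_{i,j_i}=\nuc_i$, so $\gammac^\ast$ is admissible for \eqref{eq:admissible multimarginal mixture}. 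Linearity of the integral gives $\int c_\vphi^{d,\tauc}\,\d\gammac^\ast=\sum_\jc w^\ast_\jc\vphi(\Wp_\vphi^{d,\tauc}(\muc_\jc))$. Applying $\vphi^{-1}$, which is increasing, yields $\MWp\lee W$.

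\textbf{Lower bound.} Let $\gammac=\sum_{l=1}^L\lambda_l\gammac_l$ be admissible, with $\gammac_l\in\Pi_K(\ml{A})$ and $\lambda_l>0$. Fix $i$. Projecting gives $\sum_l\lambda_l{\proj_i}_\#\gammac_l=\nuc_i$, and each ${\proj_i}_\#\gammac_l$ lies in $\ml{A}$. After merging equal atoms on the left, \cref{ass:Identifiability} shows that every ${\proj_i}_\#\gammac_l$ equals some $\muc_{i,j_i}$ (we may assume the $\muc_{i,j_i}$ are distinct and $\lambdac_i\in\Delta^{J_i}_+$). It also shows that the sets $\Lambda_{i,j_i}=\{l:{\proj_i}_\#\gammac_l=\muc_{i,j_i}\}$ satisfy $\sum_{l\in\Lambda_{i,j_i}}\lambda_l=\lambda_{i,j_i}$; this weight identity is the main point to check carefully. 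Define $w'_\jc=\sum_{l\in\bigcap_i\Lambda_{i,j_i}}\lambda_l$. Then $\wc'\in\Pi(\lambdac_0,\dots,\lambdac_{K-1})$, because the sets $\bigcap_i\Lambda_{i,j_i}$ over $\jc$ partition $\{1,\dots,L\}$. Let $\gammac'_\jc$ be the normalized sum of the corresponding $\gammac_l$; it lies in $\Pi(\muc_\jc)$. Hence
\[
\int c_\vphi^{d,\tauc}\,\d\gammac=\sum_\jc w'_\jc\int c_\vphi^{d,\tauc}\,\d\gammac'_\jc\gee\sum_\jc w'_\jc\vphi(\Wp_\vphi^{d,\tauc}(\muc_\jc)),
\]
and taking infima gives $\MWp\gee W$.

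\textbf{Consequences.} For the form of the minimizers, suppose $\gammac$ is optimal. Then both inequalities in the lower-bound chain are equalities. So $\wc'$ is optimal, and each $\gammac'_\jc$ with $w'_\jc>0$ lies in $\Gamma_\vphi^{d,\tauc}(\muc_\jc)$; this is \eqref{eq:multimarginal mixture solution}. For the sparsity claim, \eqref{eq:multimarginal mixture distance} is a linear program over the transportation polytope. That polytope is defined by $\sum_iJ_i$ equality constraints, of which only $\sum_iJ_i-K+1$ are linearly independent (each marginal sums to $1$). A vertex therefore has at most that many nonzero entries, and the statement should be read as referring to an extremal (vertex) minimizer. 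Finally, for the barycenter, apply \cref{thm:Barycenter and multimarginal problem} to the $K$ mixture marginals with $\gammac_B=\gammac^\ast$. Since $T_B^{\tauc}$ is unique, ${T_B^{\tauc}}_\#\gammac^\ast=\sum_\jc w^\ast_\jc{T_B^{\tauc}}_\#\gammac^\ast_\jc$ by linearity of push-forward, which is \eqref{eq:barycenter mixture}.

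\textbf{Expected obstacle.} The delicate step is this last one: we must ensure that the restricted minimizer $\gammac^\ast$ of \eqref{eq:admissible multimarginal mixture} is the relevant optimal plan, rather than the unrestricted multimarginal optimum. I would handle this by interpreting the barycenter within the mixture class, that is, as the minimizer of $\sum_k\tau_k\vphi(\MWp(\cdot,\nuc_k))$ over $\ml{M}(\ml{A})$. I would then repeat the disintegration argument of \cref{thm:Barycenter and multimarginal problem}, noting that the gluing of component-wise plans stays inside $\ml{M}\Pi_K(\ml{A})$.
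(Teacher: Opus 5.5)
\paragraph{The equivalence, minimizer form and sparsity}
Your arguments for $\MWp_{\ml{A},\vphi}^{d,\tauc}=W_{\ml{A},\vphi}^{d,\tauc}$, for the form \eqref{eq:multimarginal mixture solution} of minimizers, and for the sparsity bound follow the paper's route. The paper only says that the two-marginal argument carries over and cites the vertex bound for linear programs. Your version is more careful. The weight identity $\sum_{l\in\Lambda_{i,j_i}}\lambda_l=\lambda_{i,j_i}$ is exactly what \cref{ass:Identifiability} gives. You are also right that only a vertex minimizer is guaranteed to have at most $J_0+\dots+J_{K-1}-K+1$ nonzero entries: if all costs $\vphi(\Wp_\vphi^{d,\tauc}(\muc_\jc))$ coincide, the product coupling is optimal. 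Existence of such a vertex minimizer is also all the paper's proof establishes.

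\paragraph{The barycenter step}
This is the genuine gap, and you located it correctly in your last paragraph. Your main-line argument applies \cref{thm:Barycenter and multimarginal problem} with $\gammac_B=\gammac^\ast$. That theorem needs $\gammac_B$ to minimize $\int c_\vphi^{d,\tauc}\,\d\gammac$ over all of $\Pi(\nuc_0,\dots,\nuc_{K-1})$. But $\gammac^\ast$ is only optimal over $\ml{M}\Pi_{K}(\ml{A})\cap\Pi(\nuc_0,\dots,\nuc_{K-1})$. The paper's proof makes exactly this jump when it says the formula ``follows directly'' from \cref{thm:Barycenter and multimarginal problem}.

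The claim is in fact false for unrestricted barycenters. Work on $\ssy$ with $\vphi(x)=x^2$, Gaussian atoms and $\tauc=(\frac12,\frac12)$, so that $T_B^{\tauc}(x_0,x_1)=\frac{x_0+x_1}{2}$. Take $\nuc_0=\frac12\ml{N}(-1,\sigma^2)+\frac12\ml{N}(1,\sigma^2)$ and $\nuc_1=\ml{N}(0,1)$.
\begin{itemize}
\item The unique barycenter has quantile function $\frac12(F_{\nuc_0}^{-1}+F_{\nuc_1}^{-1})$. For small $\sigma$ it has almost no mass in $(-\frac14,\frac14)$.
\item Here $\wc^\ast$ and the component barycenters are unique, so \eqref{eq:barycenter mixture} forces $\frac12\ml{N}(-\frac12,s^2)+\frac12\ml{N}(\frac12,s^2)$ with $s=\frac{1+\sigma}{2}$. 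This measure has mass about $0.24$ in $(-\frac14,\frac14)$.
\end{itemize}
So your restricted reading is necessary, not cosmetic.

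\paragraph{What your repair still needs}
As sketched, the repair is incomplete in two respects.

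First, it needs a hypothesis the theorem does not contain: ${T_B^{\tauc}}_\#\gammac^\ast_\jc\in\ml{A}$ for every $\jc$, i.e.\ the dictionary must be closed under $\tauc$-barycenters. This holds for Gaussians with $\vphi(x)=x^2$, but it is not implied by the theorem's assumptions. Without it, the candidate $\sum_\jc w^\ast_\jc{T_B^{\tauc}}_\#\gammac^\ast_\jc$ need not lie in $\ml{M}(\ml{A})$, so it cannot be a barycenter within the mixture class. The analogue of \eqref{eq:01} also fails, because $(\proj_k,T_B^{\tauc})_\#\gammac^\ast$ need not be admissible for $\MWp_{\ml{A},\vphi}^d$.

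Second, you cannot get the lower bound by disintegrating the mixture plans against all of $\nuc$, as in the proof of \cref{thm:Barycenter and multimarginal problem}. The conditional laws then mix components attached to different atoms of $\nuc$, and the glued plan in general leaves $\ml{M}\Pi_{K}(\ml{A})$. Glue at the discrete level instead:
\begin{itemize}
\item Write $\nuc=\sum_m\beta_m\rho_m$ and $\bm{\beta}\dy(\beta_m)_m$. By \cref{thm:two marginal equivalent}, $\vphi(\MWp_{\ml{A},\vphi}^d(\nuc,\nuc_k))=\sum_{m,j_k}w^k_{m,j_k}\vphi(\Wp_\vphi^d(\rho_m,\muc_{k,j_k}))$ for some optimal $w^k\in\Pi(\bm{\beta},\lambdac_k)$.
\item Set $\theta_{m,\jc}\dy\beta_m\prod_k(w^k_{m,j_k}/\beta_m)$.
\item Combine $\sum_k\tau_k\vphi(\Wp_\vphi^d(\rho_m,\muc_{k,j_k}))\gee\vphi(\Wp_\vphi^{d,\tauc}(\muc_\jc))$, which is \cref{thm:Barycenter and multimarginal problem} applied to the atoms, with $(\sum_m\theta_{m,\jc})_\jc\in\Pi(\lambdac_0,\dots,\lambdac_{K-1})$.
\end{itemize}
The ``any barycenter'' direction then follows from the equality case, the uniqueness of $T_B^{\tauc}$, and the convexity of $\Gamma_\vphi^{d,\tauc}(\muc_\jc)$.
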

\begin{proof}
    By~\cref{thm:existence of infimum}, there exists at least one minimizer of~\eqref{eq:multimarginal mixture distance}. Then~\cref{ass:Identifiability} on $\ml{M}$ guarantees that $W_{\ml{A},\vphi}^{d,\tauc}$ is well-defined and $\eval{W_{\ml{A},\vphi}^{d,\tauc}}_{\ml{A}}=\Wp_{\vphi}^{d,\tauc}$. The proof of the equivalence between the discrete and continuous problems in~\cref{thm:two marginal equivalent} remains valid. Moreover, since the discrete problem is a linear program with $J_0+\dots+J_{K-1}-K+1$ affine constraints, there exists a solution with at most $J_0+\dots+J_{K-1}-K+1$ nonzero components~\cite[Theorem 2]{anderesDiscreteWassersteinBarycenters2016}, which lead to the minimizer of~\eqref{eq:admissible multimarginal mixture}. Finally, the structure of the barycenter follows directly from~\eqref{eq:multimarginal mixture solution},~\cref{ass:unique continuous barycenter} and~\cref{thm:Barycenter and multimarginal problem}.
\end{proof}

\section{Multivariate location-scale families}\label{sec:lsm}
To facilitate practical implementation, particularly within the subsequent domain adaptation framework, this section specializes our theoretical analysis to multivariate location-scale families. Our contributions are twofold:~\cref{thm:cost function} establishes the precise conditions under which affine transformations are optimal transport maps, while~\cref{thm:W_phi general} provides explicit analytical formulas for the optimal cost. Notably, for specific function families such as GMMs, these reduce to closed-form formulas.

Location-scale measures are generated from affine transformations of a given probability measure. By considering the multivariate version, we define the corresponding set of atoms as follows.
\begin{definition}[Affine location-scale atoms]
    We define the set of affine location-scale atoms generated from $\muc\in\gl_2(\ssyn)$ as 
    \begin{equation*}
        \ml{ALS}(\muc)=\dkh{T_{\#}\muc:T(\xc)=A\xc+\bc,\forall\xc\in\ssyn,A\in\ssynn,\bc\in\ssyn,\det(A)\neq0}.
    \end{equation*}
\end{definition}

We then rewrite a lemma on affine location-scale measures, which related to the $L^2$-Wasserstein distance.
\begin{lemma}[\cite{dowsonFrechetDistanceMultivariate1982}]\label{pro:affine location-scale L2}
    Let $\muc\in\gl_2(\ssyn)$ with mean $\mc\in\ssyn$ and covariance matrix $\Sigma\in\psd^n$. Let $\muc_0,\muc_1\in\ml{ALS}(\muc)$, and there exists $T_i:\ssyn\to\ssyn$ with $A_i\in\ssynn,\bc_i\in\ssyn$ such that
    \begin{equation*}
        T_i(\xc)=A_i\xc+\bc_i,\;\;\muc_i={T_i}_{\#}\muc,\quad\forall \xc\in\ssyn,i\in\{0,1\}.
    \end{equation*}
    Then, $\muc_i$ has mean $\mc_i$ and covariance $\Sigma_i$ defined by 
    \begin{equation*}
        \mc_i\dy A_i\mc+\bc_i,\Sigma_i\dy A_i\Sigma A_i^\zz,\quad\forall i\in\{0,1\}.
    \end{equation*}
    Moreover, the $L^2$-Wasserstein distance squared between $\muc_0$ and $\muc_1$ satisfies 
    \begin{equation*}
        \W(\muc_0,\muc_1)^2\gee \fs{\mc_0-\mc_1}_2^2+\Tr{\Sigma_0+\Sigma_1-2(\Sigma_0^{\frac{1}{2}}\Sigma_1\Sigma_0^{\frac{1}{2}})^{\frac{1}{2}}},
    \end{equation*}
    with equality if and only if the transport map
    \begin{equation*}
        T:\ssyn\to\ssyn,\quad
        T(\xc)=\Sigma_0^{-\frac{1}{2}}(\Sigma_0^{\frac{1}{2}}\Sigma_1\Sigma_0^{\frac{1}{2}})^{\frac{1}{2}}\Sigma_0^{-\frac{1}{2}}(\xc-\mc_0)+\mc_1,
    \end{equation*}
    is such that $T_{\#}\muc_0=\muc_1$.
\end{lemma}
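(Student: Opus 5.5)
The plan splits the statement into its three parts: the means and covariances, the lower bound, and the equality case.

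\emph{Moments.} I would first compute the mean and covariance of $\muc_i={T_i}_\#\muc$ directly. If $\Xc\sim\muc$, then $T_i(\Xc)=A_i\Xc+\bc_i\sim\muc_i$. Linearity of expectation gives $\mc_i=A_i\mc+\bc_i$. Since $\operatorname{Cov}(A_i\Xc)=A_i\operatorname{Cov}(\Xc)A_i^\zz$ and translation does not change the covariance, $\Sigma_i=A_i\Sigma A_i^\zz$. Both moments are finite because $\muc\in\gl_2(\ssyn)$.

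\emph{Lower bound.} Take any coupling $(\Xc_0,\Xc_1)$ with $\law(\Xc_i)=\muc_i$. Centering at the means gives
\begin{equation*}
\qw\fs{\Xc_0-\Xc_1}_2^2=\fs{\mc_0-\mc_1}_2^2+\Tr{\Sigma_0}+\Tr{\Sigma_1}-2\Tr{C},
\end{equation*}
where $C=\qw[(\Xc_0-\mc_0)(\Xc_1-\mc_1)^\zz]$ is the cross-covariance. The joint covariance $\begin{pmatrix}\Sigma_0&C\\ C^\zz&\Sigma_1\end{pmatrix}$ is PSD. So it suffices to prove that for every such $C$,
\begin{equation*}
\Tr{C}\lee\Tr{(\Sigma_0^{1/2}\Sigma_1\Sigma_0^{1/2})^{1/2}}.
\end{equation*}
This step is the main obstacle. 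I would do it by the standard Gaussian reduction. A Gaussian pair with the same joint covariance is a coupling of $\ml{N}(\mc_0,\Sigma_0)$ and $\ml{N}(\mc_1,\Sigma_1)$, and its cost depends only on second moments. Hence the Gaussian value of $\W^2$ quoted in the preliminaries (Takatsu) gives the bound. Alternatively, I would give a direct matrix argument: PSD-ness yields $C=\Sigma_0^{1/2}K\Sigma_1^{1/2}$ with $\fs{K}_{op}\lee1$ (using pseudo-inverses when $\Sigma_0$ is singular). Then $\Tr{C}=\Tr{K\Sigma_1^{1/2}\Sigma_0^{1/2}}$ is at most the trace norm of $\Sigma_1^{1/2}\Sigma_0^{1/2}$. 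That trace norm equals $\Tr{(\Sigma_0^{1/2}\Sigma_1\Sigma_0^{1/2})^{1/2}}$, by von Neumann's trace inequality. Taking the infimum over couplings gives the inequality.

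\emph{Equality.} For the "if" direction, suppose $T$ as defined pushes $\muc_0$ to $\muc_1$, so $\Sigma_0$ is invertible. Write $\Xc_1=T(\Xc_0)$. Then $C=\Sigma_0 M$ with $M=\Sigma_0^{-1/2}(\Sigma_0^{1/2}\Sigma_1\Sigma_0^{1/2})^{1/2}\Sigma_0^{-1/2}$, so $\Tr{C}=\Tr{(\Sigma_0^{1/2}\Sigma_1\Sigma_0^{1/2})^{1/2}}$ and the bound is attained. For the "only if" direction, suppose equality holds. Take an optimal coupling, which exists by \cref{thm:existence of infimum}. Equality in the trace bound forces the coupling to saturate the matrix inequality. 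Working in the nondegenerate case $\det A_0\neq0$ and $\Sigma$ invertible, which holds for atoms with a density, the joint covariance then has rank $n$. This forces $\Xc_1-\mc_1=M(\Xc_0-\mc_0)$ almost surely, i.e.\ $\Xc_1=T(\Xc_0)$, so $T_\#\muc_0=\muc_1$. A second route to the same conclusion: the map $T$ is the gradient of the convex function $\tfrac12(\xc-\mc_0)^\zz M(\xc-\mc_0)+\mc_1^\zz\xc$. By Brenier's theorem (via \cite[Theorem 9.4]{villaniOptimalTransport2009}), it is then optimal whenever it pushes $\muc_0$ to $\muc_1$, and uniqueness of the optimal coupling gives the converse.

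I expect the equality characterization, especially handling possibly singular $\Sigma$, to be the delicate point. I would restrict to the invertible case, which the later sections use.
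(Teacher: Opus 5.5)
The paper gives no proof of this lemma; it imports it from Dowson--Landau. Your argument is essentially the standard proof from that literature: split $\mathbb{E}\|\Xc_0-\Xc_1\|^2$ into moment terms minus $2\operatorname{Tr}C$, then maximize $\operatorname{Tr}C$ over cross-covariances that keep the joint covariance PSD. The moment formulas, the lower bound (by either of your routes) and the ``if'' direction are correct. Restricting the equality case to invertible $\Sigma$ is the right call, and in fact it is necessary, not merely convenient. If $\Sigma$ is singular, $\Sigma_0^{-1/2}$ is undefined, and reading it as a pseudo-inverse makes the ``only if'' false. Take $n=2$, $\muc=\law(Z,0)$ with $Z$ centered of unit variance, $A_0=I$, $A_1$ the rotation by $\pi/2$, and $\bc_0=\bc_1=0$. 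Then $\Sigma_0=\diag(1,0)$ and $\Sigma_1=\diag(0,1)$. Every coupling costs exactly $2$, which equals the bound, yet the pseudo-inverse version of $T$ is the constant map $\xc\mapsto 0$.

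In the ``only if'' direction, the phrase ``saturate the matrix inequality'' hides the one real step, which is uniqueness of the trace maximizer. Write $\Sigma_1^{1/2}\Sigma_0^{1/2}=UP$ (polar decomposition), with $U$ orthogonal and $P=(\Sigma_0^{1/2}\Sigma_1\Sigma_0^{1/2})^{1/2}\succ 0$. Then $\operatorname{Tr}C=\operatorname{Tr}(KUP)\le\operatorname{Tr}P$. Because $P\succ 0$ and $\|KU\|_{\mathrm{op}}\le 1$, equality forces $KU=I$. This gives two things at once:
\begin{itemize}
\item $\Sigma_1-C^\top\Sigma_0^{-1}C=\Sigma_1^{1/2}(I-K^\top K)\Sigma_1^{1/2}=0$, so $\Xc_1-\mc_1=C^\top\Sigma_0^{-1}(\Xc_0-\mc_0)$ almost surely;
\item $C=\Sigma_0 M$, so $C^\top\Sigma_0^{-1}=M$.
\end{itemize}
A rank-$n$ joint covariance alone would not identify the linear map as $M$.

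Your Brenier route gives only the ``if'' direction. Uniqueness of the optimal plan cannot tell you that the optimal map is $T$ unless you already know $T_\#\muc_0=\muc_1$. For a one-shot alternative, use the explicit potential $\phi(\xc)=\tfrac12(\xc-\mc_0)^\top M(\xc-\mc_0)+\mc_1^\top\xc$. The pointwise Fenchel--Young inequality $\langle\xc,\yc\rangle\le\phi(\xc)+\phi^\ast(\yc)$ holds with equality iff $\yc=T(\xc)$. Integrated against any coupling, its right-hand side depends only on the means and covariances, which yields the bound and the equality case together.
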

\begin{remark}
    If $A_0,A_1\in\ffdjz^n\dy\{\diag(x_1,\dots,x_n):x_{j}\gee0,\forall 1\lee j\lee n\}$, or more generally, if $A_0^\zz A_1=A_1^\zz A_0\in\psd^n$, then the inequality in~\cref{pro:affine location-scale L2} is tight. On the other hand, if this inequality is tight for all $A_0,A_1\in\zdz^n$, then $\muc$ is a shift of a spherically invariant distribution almost surely~\cite{cuestaalbertosOptimalCouplingMultivariate1993}.
\end{remark}

Let us consider more general cases in which the affine transformation constitutes an optimal transport map.

\begin{theorem}[Characterization of Costs for Positive Definite Affine Optimal Maps]\label{thm:cost function}
Let $\muc\in\gl(\ssyn)$ be absolutely continuous with respect to the Lebesgue measure $\ml{L}^n$, whose density is strictly positive on a non-empty open set $U \subseteq \ssyn$. Let $\mht{P} \subseteq \zdz^n \times \ssyn$. For each $(A,\bc)\in\mht{P}$, we define $T_{A,\bc}(\xc)\dy A\xc+\bc$ and $\muc_{A,\bc}\dy {T_{A,\bc}}_{\#}\muc$.
Let the cost function $c: \ssyn \times \ssyn \to \md{R}$ be translation-invariant, i.e., $c(\xc,\yc) \dy \tilde{c}(\xc-\yc)$, where $\tilde{c} \in C(\ssyn)$ is twice continuously differentiable except on a finite set $\mht{S} \subset \ssyn$.
Assume that for each $(A,\bc)\in\mht{P}$, there exist upper semicontinuous functions $u_{A,\bc}\in L^1(\muc)$ and $v_{A,\bc}\in L^1(\muc_{A,\bc})$ such that $c(\xc,\yc)\gee u_{A,\bc}(\xc)+v_{A,\bc}(\yc)$ for all $\xc,\yc\in\ssyn$, and that
\begin{equation*}
    C_c(\muc,\muc_{A,\bc})\dy\inf_{\gammac\in\Pi(\muc,\muc_{A,\bc})}\int_{\ssyn\times\ssyn} c(\xc,\yc)\d \gammac(\xc,\yc)<+\infty.
\end{equation*}

\begin{itemize}
    \item[\textbf{(i)}] \textbf{Necessity}: Suppose that for any $(A, \bc) \in \mht{P}$, $T_{A,\bc}$ is an optimal transport map with respect to $c$. Assume that there exists at least one matrix $A$ with mutually distinct eigenvalues for $n \geqslant 2$, and $A \neq [1]$ for $n = 1$, such that $\{A\}\times\ssyn\subseteq\mht{P}$. 
    Then, $\tilde{c}$ is a globally convex function that is separable with respect to the orthonormal basis of eigenvectors of $A$. Specifically, if $A = Q D Q^\zz$ is the spectral decomposition, there exists a convex function $c_i$ for each $i\in\{1,\dots,n\}$ such that $\tilde{c}(\zc) = \sum_{i=1}^n c_i((Q^\zz \zc)_i)$ for all $\zc \in \ssyn$, where $(\,\cdot\,)_i$ denotes the $i$-th component of a vector. Furthermore, if $n \geqslant 2$, each $c_i$ is globally twice continuously differentiable, i.e., $c_i \in C^2(\md{R})$.
    
    \item[\textbf{(ii)}] \textbf{Sufficiency}: Conversely, if $\tilde{c}(\zc) = \sum_{i=1}^n c_i((Q^\zz \zc)_i)$ for all $\zc \in \ssyn$, where each $c_i \in C^2(\md{R})$ is convex, then for any SPD matrix $A$ diagonalized by $Q$ and any $\bc \in \ssyn$, the affine map $T_{A,\bc}(\xc) = A\xc + \bc$ is an optimal transport map with respect to $c$ between $\muc$ and $(T_{A,\bc})_\# \muc$. If, in addition, each $c_i$ is strictly convex, then $T_{A,\bc}$ is the unique optimal transport map.
    
    \item[\textbf{(iii)}] \textbf{Cross-Coupling and Quadratic Costs}: Additionally, suppose the hypotheses of (i) hold. If $\mht{P}$ contains another matrix $\bar{A}$ such that $\{\bar{A}\}\times\ssyn\subseteq\mht{P}$, let $\bar{A}_Q \dy Q^\zz \bar{A} Q$ be its representation in the $Q$-basis. If the $(i,j)$-th entry of $\bar{A}_Q$ is non-zero for some $i \neq j$, then $c_i$ and $c_j$ must be quadratic functions sharing the same second derivative, i.e., there exists a constant $\kappa \gee 0$ such that $c_i''(t) = c_j''(t) = \kappa$ for all $t \in \md{R}$.
\end{itemize}
\end{theorem}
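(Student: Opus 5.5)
The whole argument runs through the characterization of optimality by $c$-cyclical monotonicity. The marginal conditions of the statement (lower bound by $u_{A,\bc}+v_{A,\bc}$, finite cost) let \cref{thm:existence of infimum} apply, so optimal plans exist. A plan supported on the graph of $T_{A,\bc}$ is optimal exactly when that graph is $c$-cyclically monotone on $\operatorname{supp}\muc$, which contains the open set $U$.

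\textbf{Necessity (i).} I would work on $U$ and use two-point cyclical monotonicity. For every $\bc$ and all $\xc,\xc'\in U$,
\[
\tilde c(\xc-A\xc-\bc)+\tilde c(\xc'-A\xc'-\bc)\lee \tilde c(\xc-A\xc'-\bc)+\tilde c(\xc'-A\xc-\bc).
\]
Since $\bc$ ranges over all of $\ssyn$, $\zc\dy(I-A)\xc-\bc$ is arbitrary. Put $\xc'=\xc+\varepsilon\hc$ with $\hc$ small. A second-order Taylor expansion at points $\zc\notin\mht S$ then gives
\[
\hc^\zz\bigl(\nabla^2\tilde c(\zc)A\bigr)\hc\gee0 \quad\text{for all }\hc,
\]
i.e.\ the symmetric part of $H(\zc)A$ is PSD, where $H\dy\nabla^2\tilde c$. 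Conversely, $\tilde c$ restricted to the graph is the cost of the optimal map. If the map is optimal, the first-order condition from Kantorovich duality says $\nabla\tilde c(\xc-T\xc)=\nabla\psi(\xc)$ is a gradient in $\xc$. Its Jacobian $H(\zc)(I-A)$ must therefore be symmetric, for every $\zc$ off $\mht S$.

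So $HA$ is symmetric. If $A$ has distinct eigenvalues, the commutation relation $H A=A H$ forces $Q^\zz H Q$ to be diagonal. In the $Q$-coordinates $\yc=Q^\zz\zc$, the mixed partials $\partial_i\partial_j\tilde c$ vanish for $i\ne j$ away from $\mht S$. Because $\mht S$ is finite and $n\ge2$, the complement is connected and dense. Integrating then gives $\tilde c(\zc)=\sum_i c_i(y_i)$, with each $c_i$ in $C^2$: a singular point in one coordinate would be a whole hyperplane of singularities, contradicting finiteness of $\mht S$. The PSD condition $H D\succeq0$ with $D\succ0$ diagonal gives $c_i''\ge0$, so $\tilde c$ is convex. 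For $n=1$ the symmetry condition is vacuous. There I would use the condition $c''(z)\,a\ge0$ directly and invoke continuity of $\tilde c$ across the finite set $\mht S$ to get global convexity.

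\textbf{Sufficiency (ii).} In the $Q$-basis the problem splits into $n$ one-dimensional problems with convex cost $c_i(x-y)$. The map $T$ acts coordinatewise as $y_i\mapsto d_i y_i+b_i$, which is increasing since $d_i>0$. For a convex translation-invariant cost on $\ssy$, any monotone coupling is optimal, so the graph of $T$ is $c_i$-cyclically monotone in each coordinate. Summing over coordinates gives $c$-cyclical monotonicity of the graph of $T$, and hence optimality of $T$ by the stated integrability and the standard criterion (Villani Thm.~5.10). If each $c_i$ is strictly convex, $\tilde c$ is strictly convex, so the twist condition holds. Since $\muc$ is absolutely continuous, Villani Thm.~10.28 then gives uniqueness.

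\textbf{Cross-coupling (iii).} Apply the symmetry condition to $\bar A$. Writing $H_Q=\diag(c_1''(y_1),\dots,c_n''(y_n))$, we need $H_Q\bar A_Q$ symmetric, i.e.
\[
c_i''(y_i)(\bar A_Q)_{ij}=c_j''(y_j)(\bar A_Q)_{ji}.
\]
Since $\bar A_Q$ is symmetric with nonzero $(i,j)$ entry, this gives $c_i''(y_i)=c_j''(y_j)$ for all independent $y_i,y_j$. Both must then equal a common constant $\kappa$, and $\kappa\ge0$ by convexity.

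\textbf{Main obstacle.} I expect the hardest step to be rigorously deriving the symmetry of $H(\zc)(I-A)$ for all $\zc$, including the case where $I-A$ is singular. That is exactly why $A=[1]$ is excluded; one eigenvalue may equal $1$, and the freedom in $\bc$ must be used to cover it. My plan is to use the potential $\psi$ from Kantorovich duality, show it is $C^2$ on $U$ via the implicit relation, and handle the exceptional set $\mht S$ by perturbing $\bc$.
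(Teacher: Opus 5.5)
Your overall architecture is sound and arrives at the same key relations as the paper. These are the symmetry of $\nabla^2\tilde c(\zc)(\dwz-A)$ from a $C^2$ Kantorovich potential, diagonalization by $Q$, the slicing argument for $c_i\in C^2$ when $n\gee2$, and the same entrywise computation in (iii). Several sub-steps take a different route:

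\textbf{PSD condition.} You expand the two-point $c$-cyclical monotonicity inequality to second order. Choosing $\bc=(\dwz-A)\xc-\zc$ then gives $\bm{h}^\zz\nabla^2\tilde c(\zc)A\bm{h}\gee0$ at \emph{every} $\zc\notin\mht{S}$. The paper instead first proves $\psi\in C^2$ and uses $\nabla^2 f_{\xc}(\xc)\succcurlyeq0$. It then needs the Tonelli averaging over $\bc$ in Step 2 to pass from a.e.\ $\xc$ to a.e.\ $\zc$. Your pointwise choice of $\bc$ also gives the symmetry on a small ball around a single $\xc$, so you never need $X_{\mht{S}}$ to be null.

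\textbf{Details to spell out.} First, optimality only yields $c$-cyclical monotonicity on a $\muc$-full subset of the graph. You extend it to all of $U$ by density and continuity of $\tilde c$ and $T$. Second, the regularity of $\psi$ is exactly the paper's Step 1 (local Lipschitz, Rademacher, a.e.\ gradient equal to a $C^1$ field). This holds only where $(\dwz-A)\xc-\bc\notin\mht{S}$, which is enough for your argument.

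\textbf{Other differences.} For separability you integrate vanishing mixed partials on the connected set $\ssyn\setminus Q^\zz\mht{S}$, where the paper uses distributions and Schwartz's theorem. For sufficiency you use submodularity of $c_i(x-y)$ on increasing graphs, summed over coordinates with a common permutation, plus Villani's Theorem 5.10. The paper instead builds $\psi$ via Poincar\'e's lemma. For uniqueness you use the twist condition and Theorem 10.28, while the paper uses its unique-minimizer argument. Your sufficiency route is more elementary; the paper's produces an explicit dual potential, which it reuses for uniqueness.

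\textbf{The gap: convexity for $n=1$.} The step that fails as written is global convexity when $n=1$. Knowing $\tilde c''\gee0$ off $\mht{S}$ and that $\tilde c$ is continuous does not give convexity. The function $\tilde c(z)=-\jdz{z}$ is continuous, has $\tilde c''=0$ off $\{0\}$, and is concave. You have to rule out concave kinks at points of $\mht{S}$.

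Your own two-point inequality does this if you apply it at $\zc=s\in\mht{S}$ with $\xc'=\xc+\varepsilon$. For $0<a<1$ it reads $\tilde c(s+(1-a)\varepsilon)-\tilde c(s+\varepsilon)\lee\tilde c(s-a\varepsilon)-\tilde c(s)$. The mean value theorem on each side of $s$ then gives $\lim_{t\uparrow s}\tilde c'(t)\lee\lim_{t\downarrow s}\tilde c'(t)$; the case $a>1$ is analogous. Hence $\tilde c'$ is nondecreasing across $s$, and $\tilde c$ is convex.

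The paper's Step 3 makes the same unjustified inference for $n=1$ (``$c_i''\gee0$ outside a finite set \ldots\ proving global convexity''). For $n\gee2$ both your argument and the paper's are fine, because $c_i\in C^2(\ssy)$ is established first.
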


\begin{proof}
The proof is divided into the following steps.

\textbf{Step 1: First-Order Condition and Regularity Bootstrapping.} \\
Fix an arbitrary $\bc \in \ssyn$ and let $T(\xc) = A\xc + \bc$. By Kantorovich duality~\cite[Theorem 5.10]{villaniOptimalTransport2009}, there exists a $c$-convex function $\psi$ such that the global inequality $\psi^c(\yc) - \psi(\xc) \lee c(\xc,\yc)$ holds for all $\xc,\yc \in \ssyn$, and the duality relation $\psi^c(T(\xc)) - \psi(\xc) = c(\xc, T(\xc))$ holds for $\muc$-a.e. $\xc \in \ssyn$.

Let $X_{\mht{S}} \dy \{\xc \in U : (\dwz-A)\xc - \bc \in \mht{S}\}$. Since $\mht{S}$ is finite and the kernel of $\dwz-A$ has dimension at most 1 (because $A$ has distinct eigenvalues for $n \geqslant 2$, and $A \neq [1]$ for $n = 1$), $X_{\mht{S}}$ is a finite union of affine subspaces of dimension at most 1. For $n \geqslant 2$, this implies the Lebesgue measure $\ml{L}^n(X_{\mht{S}}) = 0$. For $n = 1$, $A \neq [1]$ implies $\dwz-A$ is invertible, so $X_{\mht{S}}$ is finite and $\ml{L}^1(X_{\mht{S}}) = 0$. Thus, $U' \dy U \setminus X_{\mht{S}}$ is an open set of full Lebesgue measure in $U$.

We now establish the regularity of $\psi$. Since $\tilde{c} \in C(\ssyn)$ is twice continuously differentiable outside the finite set $\mht{S}$, it is locally Lipschitz continuous on $\ssyn$. Because the optimal map $T(\xc)$ is continuous, it is locally bounded on $U$. Since the supremum in the $c$-transform is attained at $T(\xc)$, this local boundedness, combined with the local Lipschitz continuity of $\tilde{c}$, guarantees that the $c$-convex potential $\psi$ is locally Lipschitz on $U$. By Rademacher's theorem, $\psi$ is differentiable $\ml{L}^n$-a.e. 

Let $\Omega_{\bc} \subseteq U'$ be the set of full measure where $\psi$ is differentiable and the duality relation holds. For any fixed $\xc \in \Omega_{\bc}$, define the auxiliary function $f_{\xc}(\xc') \dy c(\xc', T(\xc)) + \psi(\xc')$. The global inequality implies $f_{\xc}(\xc') \gee \psi^c(T(\xc))$ for all $\xc'$, while the duality relation ensures $f_{\xc}(\xc) = \psi^c(T(\xc))$. Thus, $f_{\xc}$ attains a global minimum at $\xc' = \xc$.

Since $\xc \in U'$ (an open set), it is an interior minimum. Furthermore, because $\xc \notin X_{\mht{S}}$, the point $\xc - T(\xc) \notin \mht{S}$, meaning $c(\,\cdot\,, T(\xc))$ is continuously differentiable at $\xc$. Since $\psi$ is also differentiable at $\xc$, Fermat's theorem for interior extrema dictates that $\nabla f_{\xc}(\xc) = \bm{0}$. This yields
\begin{equation}\label{eq:first_order}
    \nabla \psi(\xc) = -\nabla_{\xc} c(\xc, T(\xc)), \quad \forall \xc \in \Omega_{\bc}.
\end{equation}
Notice that the right-hand side is a $C^1$ vector field on $U'$ because $\tilde{c} \in C^2(\ssyn \setminus \mht{S})$. Since $\psi$ is locally Lipschitz, it belongs to the Sobolev space $W^{1,\infty}_{\text{loc}}(U')$, and its weak gradient $\nabla \psi$ exists almost everywhere.
Since $\nabla \psi$ coincides almost everywhere with the $C^1$ vector field $-\nabla_{\xc} c(\xc, T(\xc))$ on $U'$, it follows immediately 
that $\psi$ is classically continuously differentiable on $U'$, and 
$\nabla \psi(\xc) = -\nabla_{\xc} c(\xc, T(\xc))$ holds everywhere on $U'$.
The $C^1$-regularity of the right-hand side then guarantees that $\psi \in C^2(U')$.

\textbf{Step 2: Second-Order Condition and Globalization.} \\
Since $\psi \in C^2(U')$, we can differentiate~\eqref{eq:first_order} everywhere on $U'$ to obtain the Jacobian matrix relation: $\nabla^2 \psi(\xc) = -\nabla^2_{\xc\xc} c(\xc, T(\xc)) - \nabla^2_{\xc\yc} c(\xc, T(\xc)) A$. Concurrently, since $f_{\xc} \in C^2(U')$ attains a local minimum at $\xc$, its second-order necessary condition requires the pointwise Hessian to be PSD: $\nabla^2 f_{\xc}(\xc) = \nabla^2_{\xc\xc} c(\xc, T(\xc)) + \nabla^2 \psi(\xc) \succcurlyeq 0$. Substituting the former into the latter cancels $\nabla^2_{\xc\xc} c$, yielding $-\nabla^2_{\xc\yc} c(\xc, T(\xc)) A \succcurlyeq 0$. Using $c(\xc,\yc) = \tilde{c}(\xc-\yc)$, this becomes $\nabla^2 \tilde{c}(\xc-T(\xc)) A \succcurlyeq 0$. The symmetry of $\nabla^2 \psi(\xc)$ further implies that $\nabla^2 \tilde{c}(\xc-T(\xc))$ commutes with $A$.

To globalize this, let $\chi(\zc)$ be the indicator function of the set of points $\zc \in \ssyn$ where $\nabla^2 \tilde{c}(\zc)$ either does not exist, does not commute with $A$, or $\nabla^2 \tilde{c}(\zc) A$ is not PSD. From the local conditions, for every $\bc \in \ssyn$, we have $\chi((\dwz-A)\xc - \bc) = 0$ for $\ml{L}^n$-a.e. $\xc \in U$. Thus, $\int_U \chi((\dwz-A)\xc - \bc) \d\xc = 0$. 

Integrating this over all $\bc \in \ssyn$ and applying Tonelli's theorem to swap the order of integration, we obtain:
\begin{equation*}
    0 = \int_{\ssyn} \left( \int_U \chi((\dwz-A)\xc - \bc) \d\xc \right) \d\bc = \int_U \left( \int_{\ssyn} \chi((\dwz-A)\xc - \bc) \d\bc \right) \d\xc.
\end{equation*}
For any fixed $\xc \in U$, we perform the change of variables $\zc = (\dwz-A)\xc - \bc$. The Jacobian determinant of the map $\bc \mapsto \zc$ is $(-1)^n$, so $\d\bc = \d\zc$. The inner integral becomes $\int_{\ssyn} \chi(\zc) \d\zc$, which is entirely independent of $\xc$. Substituting this back yields:
\begin{equation*}
    0 = \int_U \left( \int_{\ssyn} \chi(\zc) \d\zc \right) \d\xc = \ml{L}^n(U) \int_{\ssyn} \chi(\zc) \d\zc.
\end{equation*}
Since $U$ is a non-empty open set, $\ml{L}^n(U) > 0$. Therefore, $\int_{\ssyn} \chi(\zc) \d\zc = 0$, meaning the commutativity and PSD properties hold for $\ml{L}^n$-a.e. $\zc \in \ssyn$. Because $\nabla^2 \tilde{c}$ is continuous on $\ssyn \setminus \mht{S}$, these properties must hold everywhere on the open set $\ssyn \setminus \mht{S}$.

\textbf{Step 3: Distributional Separability and Global Convexity.} \\
Since $\nabla^2 \tilde{c}(\zc)$ commutes with $A$ on $\ssyn \setminus \mht{S}$, and $A = Q D Q^\zz$ has distinct eigenvalues, $\nabla^2 \tilde{c}(\zc)$ is simultaneously diagonalized by $Q$. Let $\tc = (t_1, \dots, t_n)^\zz \in \ssyn$ and define $\hat{c}(\tc) \dy \tilde{c}(Q\tc)$. By the chain rule, $\nabla^2 \hat{c}(\tc) = Q^\zz \nabla^2 \tilde{c}(Q\tc) Q$ is a diagonal matrix for all $\tc \in \ssyn \setminus Q^\zz \mht{S}$.

This implies that all mixed classical second derivatives vanish: $\partial^2_{t_i t_j} \hat{c}(\tc) = 0$ for all $i \neq j$ on $\ssyn \setminus Q^\zz \mht{S}$. Note that any distribution supported on a finite set $Q^\zz \mht{S}$ must be a finite linear combination of Dirac $\delta$ distributions and their derivatives. However, the mixed partial derivative $\partial^2_{t_i t_j}$ of a continuous function cannot contain such singular distributions. Indeed, if $\partial^2_{t_i t_j} \hat{c}$ were to contain a Dirac $\delta$ mass $\delta_{\tc_0}$ (or its derivatives) at some point $\tc_0 \in Q^\zz \mht{S}$, integrating this relation would directly contradict the continuity of $\hat{c}$ around $\tc_0$. It follows that the distributional mixed derivatives $\partial^2_{t_i t_j} \hat{c}$ must vanish globally on $\ssyn$. By Schwartz's theorem on distributions, $\hat{c}$ is globally separable: $\hat{c}(\tc) = \sum_{i=1}^n c_i(t_i)$. The PSD condition $\nabla^2 \hat{c}(\tc) D \succcurlyeq 0$ outside a finite set ensures $c_i''(t_i) \gee 0$, proving global convexity.

For $n \geqslant 2$, we can further show that $c_i \in C^2(\md{R})$ for each $i\in\{1,\dots,n\}$. Fix an arbitrary $t_i \in \md{R}$. Since $\mht{S}$ is a finite set, $Q^\zz \mht{S} \subset \ssyn$ is also finite. Thus, the slice
\begin{equation*}
    \Xi_{t_i} \dy \dkh{ (t_1, \dots, t_{i-1}, t_{i+1}, \dots, t_n)^\zz \in \md{R}^{n-1} : (t_1, \dots, t_n)^\zz \in Q^\zz \mht{S} }
\end{equation*}
is finite. Since $n \geqslant 2$, the space $\md{R}^{n-1}$ is uncountable. We can therefore choose a point $(t_1^*, \dots, t_{i-1}^*, t_{i+1}^*, \dots, t_n^*)^\zz \in \md{R}^{n-1} \setminus \Xi_{t_i}$. By construction, the point $\tc^* \dy (t_1^*, \dots, t_{i-1}^*, t_i, t_{i+1}^*, \dots, t_n^*)^\zz$ does not belong to $Q^\zz \mht{S}$. Since $\ssyn \setminus Q^\zz \mht{S}$ is open, there exists an open neighborhood $V$ of $\tc^*$ such that $V \cap Q^\zz \mht{S} = \emptyset$. On this neighborhood $V$, the function $\hat{c}$ is of class $C^2$. Since $\hat{c}(\tc) = \sum_{k=1}^n c_k(t_k)$, we have
\begin{equation*}
    c_i(x) = \hat{c}(t_1^*, \dots, t_{i-1}^*, x, t_{i+1}^*, \dots, t_n^*) - \sum_{\ell \neq i} c_\ell(t_\ell^*)
\end{equation*}
for all $x$ in a neighborhood of $t_i$. Since the first term on the right-hand side is $C^2$ with respect to $x$ in a neighborhood of $t_i$, and the second term is a constant, it follows that $c_i$ is twice continuously differentiable in a neighborhood of $t_i$. Since $t_i \in \md{R}$ was chosen arbitrarily, we conclude that $c_i \in C^2(\md{R})$ for all $i \in \{1, \dots, n\}$ when $n \geqslant 2$.

\textbf{Step 4: Sufficiency and Uniqueness of the Optimal Map.} \\
Assume $\tilde{c}(\zc) = \sum_{i=1}^n c_i((Q^\zz \zc)_i)$ for all $\zc \in \ssyn$, where each $c_i \in C^2(\md{R})$ is convex. Let $A = Q D Q^\zz$ be any SPD matrix and $\bc \in \ssyn$. We construct the Kantorovich potential $\psi$ by defining its gradient: $\nabla \psi(\xc) \dy -\nabla_{\xc} c(\xc, T_{A,\bc}(\xc)) = -\nabla \tilde{c}((\dwz-A)\xc - \bc)$. The Jacobian of this vector field is $-\nabla^2 \tilde{c}((\dwz-A)\xc - \bc) (\dwz-A)$. Since $\nabla^2 \tilde{c}$ and $\dwz-A$ are both diagonalized by $Q$, they commute, making the Jacobian symmetric. By Poincaré's lemma, a global potential $\psi \in C^2(\ssyn)$ exists.

To prove that $T_{A,\bc}$ is an optimal transport map, it suffices to show that for any fixed $\xc \in \ssyn$, the objective function $f_{\xc}(\xc')$\,---\,redefined here as $c(\xc', T_{A,\bc}(\xc)) + \psi(\xc')$\,---\,attains its global minimum at $\xc' = \xc$. Without loss of generality, we can rotate the coordinate system by $Q^\zz$ so that $A = \diag(a_1, \dots, a_n)$ is diagonal with $a_i > 0$ for all $i\in\{1,\dots,n\}$, and $\tilde{c}(\zc) = \sum_{i=1}^n c_i(z_i)$. In this rotated coordinate system, the potential $\psi(\xc')$ decomposes as $\psi(\xc') = \sum_{i=1}^n \psi_i(x_i')$, where each $\psi_i$ satisfies $\psi_i'(x_i') = -c_i'((1-a_i)x_i' - b_i)$. Consequently, $f_{\xc}(\xc') = \sum_{i=1}^n f_i(x_i')$, where
\begin{equation*}
    f_i(x_i') \dy c_i(x_i' - a_i x_i - b_i) + \psi_i(x_i').
\end{equation*}
For each fixed $i \in \{1, \dots, n\}$, differentiating $f_i(x_i')$ with respect to $x_i'$ yields
\begin{equation*}
    f_i'(x_i') = c_i'(x_i' - a_i x_i - b_i) - c_i'((1-a_i)x_i' - b_i).
\end{equation*}
Let $q_1 \dy x_i' - a_i x_i - b_i$ and $q_2 \dy (1-a_i)x_i' - b_i$. Their difference is given by $q_1 - q_2 = a_i(x_i' - x_i)$. Since $c_i$ is convex, $c_i' \in C^1(\md{R})$ is non-decreasing. We analyze the sign of $f_i'(x_i')$ in two cases:
\begin{itemize}
    \item If $x_i' > x_i$, then $q_1 > q_2$, which implies $c_i'(q_1) \gee c_i'(q_2)$, and thus $f_i'(x_i') \gee 0$.
    \item If $x_i' < x_i$, then $q_1 < q_2$, which implies $c_i'(q_1) \lee c_i'(q_2)$, and thus $f_i'(x_i') \lee 0$.
\end{itemize}
This shows that $x_i$ is a global minimum of $f_i(x_i')$. Since this holds for each $i \in \{1, \dots, n\}$, the point $\xc' = \xc$ is a global minimum of $f_{\xc}(\xc')$, which proves that $T_{A,\bc}$ is an optimal transport map.

Now, assume in addition that each $c_i$ is strictly convex. Then $c_i'$ is strictly increasing. In this case, the inequalities above become strict:
\begin{itemize}
    \item If $x_i' > x_i$, then $q_1 > q_2$, which implies $c_i'(q_1) > c_i'(q_2)$, and thus $f_i'(x_i') > 0$.
    \item If $x_i' < x_i$, then $q_1 < q_2$, which implies $c_i'(q_1) < c_i'(q_2)$, and thus $f_i'(x_i') < 0$.
\end{itemize}
This establishes that $x_i$ is the unique global minimum of $f_i(x_i')$, and consequently, $\xc' = \xc$ is the unique global minimum of $f_{\xc}(\xc')$. This implies that the $c$-subdifferential of $\psi$ at $T_{A,\bc}(\xc)$ is the singleton $\{\xc\}$ for $\muc$-a.e. $\xc \in \ssyn$. By~\cite[Theorem 5.10]{villaniOptimalTransport2009}, the optimal transport map is unique.

\textbf{Step 5: Cross-Coupling Implies Quadratic Costs.} \\
Suppose $\mht{P}$ contains another matrix $\bar{A}$ such that $\{\bar{A}\}\times\ssyn\subseteq\mht{P}$. By the exact same globalization argument in Step 2, the Hessian $\nabla^2 \tilde{c}(\zc)$ must commute with $\bar{A}$ for $\ml{L}^n$-a.e. $\zc \in \ssyn$. Let $\tc = Q^\zz \zc$ and $\hat{c}(\tc) \dy \tilde{c}(Q\tc) = \sum_{k=1}^n c_k(t_k)$. The Hessian in the $Q$-basis is the diagonal matrix $\nabla^2 \hat{c}(\tc) = \diag(c_1''(t_1), \dots, c_n''(t_n))$. 

The commutativity condition $\nabla^2 \tilde{c}(\zc) \bar{A} = \bar{A} \nabla^2 \tilde{c}(\zc)$ transforms into $\nabla^2 \hat{c}(\tc) \bar{A}_Q = \bar{A}_Q \nabla^2 \hat{c}(\tc)$ for a.e. $\tc \in \ssyn$, where $\bar{A}_Q \dy Q^\zz \bar{A} Q$. Comparing the $(i,j)$-th entry of both sides yields
\begin{equation*}
    c_i''(t_i) (\bar{A}_Q)_{ij} = (\bar{A}_Q)_{ij} c_j''(t_j), \quad  \ml{L}^n\text{-a.e. } \tc \in \ssyn,
\end{equation*}
where $(\,\cdot\,)_{ij}$ denotes the entry in the $i$-th row and $j$-th column of a matrix. If $(\bar{A}_Q)_{ij} \neq 0$ for some $i \neq j$, we can divide by it to obtain $c_i''(t_i) = c_j''(t_j)$ for a.e. $t_i, t_j \in \md{R}$. Since the left-hand side depends only on $t_i$ and the right-hand side only on $t_j$, both must equal a common constant $\kappa$. Because $n \gee 2$ (since $i \neq j$), we know from Step 3 that $c_i, c_j \in C^2(\md{R})$, so this equality holds everywhere. The convexity of $c_i$ ensures $\kappa \gee 0$. Consequently, $c_i$ and $c_j$ are quadratic functions sharing the same second derivative $\kappa$. Note that any linear or constant terms in $c_i$ and $c_j$ do not affect the optimality of the transport map, as they merely add a constant to the total transport cost. Therefore, no further relationship between the lower-order terms of $c_i$ and $c_j$ can be deduced.
\end{proof}

\begin{remark}[Regularity Improvement for $n \geqslant 2$]
When $n \geqslant 2$, the conclusion that $c_i \in C^2(\md{R})$ for all $i\in\{1,\dots,n\}$ implies that the cost function $\tilde{c}$ is actually twice continuously differentiable on the entire space $\ssyn$, meaning the potential singular set $\mht{S}$ must be empty. This reveals that the existence of affine optimal transport maps for a full-space set of translations is so restrictive that it smooths out any isolated singularities of the cost function.
\end{remark}

\begin{remark}[Weaker Conditions on the Translation Set]
The assumption that $B_A = \ssyn$ can be relaxed to $B_A$ being merely dense in $\ssyn$, provided we additionally assume that $1$ is not an eigenvalue of $A$ for $n\gee2$. Under this condition, the matrix $\dwz-A$ is invertible, which maps the full-measure set $\Omega_{\bc}$ to a full-measure set $V_{\bc}$ in the domain $(\dwz-A)U' - \bc$. The countable union of $V_{\bc}$ over a dense set of translations $\bc$ is sufficient to cover $\ssyn$ almost everywhere.
\end{remark}

\begin{remark}[Rigidity to the Squared Euclidean Distance]
Building upon~\cref{thm:cost function}(iii), if the parameter set $\mht{P}$ is sufficiently rich such that the non-zero off-diagonal entries of the matrices in $\mht{P}$ (when represented in the $Q$-basis) form a connected graph over the indices $\{1, \dots, n\}$, then all components $c_i$ must share the same second derivative $\kappa \gee 0$. In this case, the cost function is forced to be exactly the squared Euclidean distance, $\tilde{c}(\zc) = \frac{\kappa}{2}\|\zc\|^2 + \bc_1^\zz \zc + b_2$, up to a linear and a constant term.
\end{remark}

\begin{remark}[Connection to Location-Scale Families]
In particular, $T_{A,\bc}$ is an optimal transport map with respect to the squared Euclidean distance if and only if $A \in \zdz^n$~\cite{svetlozart.MassTransportationProblems1998}. This aligns with our result when $c_i(t) = t^2$ for all $i\in\{1,\dots,n\}$. For a general cost function, for simplicity, one may choose $u_{A,\bc} \equiv v_{A,\bc} \equiv 0$, so that the condition $c(\xc,\yc) \gee u_{A,\bc}(\xc) + v_{A,\bc}(\yc)$ reduces to $\tilde{c} \gee 0$. Furthermore, by~\cref{thm:cost function}, we can define specific location-scale families generated from any suitable $\muc \in \gl(\ssyn)$ as follows:
\begin{align*}
    \ml{LS}(\muc) &\dy \dkh{T_\#\muc : T(\xc) = A\xc + \bc, \; \xc \in \ssyn, \; A \in \zdz^n, \; \bc \in \ssyn}, \\
    \ml{DLS}(\muc) &\dy \dkh{T_\#\muc : T(\xc) = A\xc + \bc, \; \xc \in \ssyn, \; A \in \zdjz^n, \; \bc \in \ssyn}.
\end{align*}
\end{remark}

A clear example of location-scale atoms is provided by a generative atom $\muc$, which is an absolutely continuous, radially symmetric probability measure on $\ssyn$ with finite second-order moments. For simplicity, we define its density as follows:
\begin{equation*}
    p_{g}(\xc)\dy g(\xc^\zz\xc),\quad\forall\xc\in\ssyn,
\end{equation*}
where $g$ is a nonnegative measurable function satisfying
\begin{equation*}
    \int_\ssyn g(\xc^\zz\xc)\d\xc=\frac{\pi^{n/2}}{\Gamma(\frac{n}{2})}\int_0^{+\infty} t^{\frac{n}{2}-1}g(t)\d t=1.
\end{equation*}
We consider the corresponding family $\ml{LS}(\muc)$ or $\ml{DLS}(\muc)$. 
Related to~\cref{pro:vphi wasserstein distance}, assume that there exist nonnegative, twice continuously differentiable (except at finitely many points) and convex functions $c_1,\dots,c_n$ such that $\vphi(d(\xc,\yc))=\tilde{c}(\xc-\yc)\dy\sum_{i=1}^nc_i(x_i-y_i)$ for all $\xc=(x_1,\dots,x_n),\yc=(y_1,\dots,y_n)\in\ssyn$. Let $\Sigma_0,\Sigma_1\in\zdz^n$, $\mc_0,\mc_1\in\ssyn$, and let
\begin{equation}\label{eq:map can shu}
    \begin{aligned}
        A\dy&\Sigma_0^{-\frac{1}{2}}(\Sigma_0^{\frac{1}{2}}\Sigma_1\Sigma_0^{\frac{1}{2}})^{\frac{1}{2}}\Sigma_0^{-\frac{1}{2}},
        &&\ \,\bc\dy\mc_1-A\mc_0, \\
        T(\xc)\dy& A\xc+\bc, &&\;\,\zc\dy(z_1,\dots,z_n)\dy(I-A)\xc-\bc, \\
        \Sigma\dy&(I-A)\Sigma_0(I-A),&&\mc \dy \mc_0-\mc_1, \\
        T_i(\xc)\dy&\Sigma_i^{\frac{1}{2}}\xc+\mc_i,
        &&\muc_i\dy {T_i}_\#\muc,\qquad\forall\xc\in\ssyn,i\in\{0,1\}.
    \end{aligned}
\end{equation}
Further, assume that $c_i(0)=0$ for all $i\in\{1,\dots,n\}$, and that $\muc_0$ and $\tilde{c}$ satisfy the other conditions in~\cref{thm:cost function}, then $T$ is an optimal map from $\muc_0$ to $\muc_1$ with respect to $\vphi\circ d$. By an explicit calculation,
\begin{align*}
    (\xc-\mc_0)^\zz\Sigma_0^{-1}(\xc-\mc_0)=&(\zc-\mc)^\zz\Sigma^{-1}(\zc-\mc), \\
    \implies\vphi(\Wp_{\vphi}^d(\muc_0,\muc_1))=&\int_{\ssyn}\vphi(d(\xc,A\xc+\bc))\d\muc_0(\xc) \\
    =&\det(\Sigma_0)^{-\frac{1}{2}}\int_{\ssyn}\tilde{c}((I-A)\xc-\bc)p_g(\Sigma_0^{-\frac{1}{2}}(\xc-\mc_0))\d\xc \\
    =&\det(\Sigma)^{-\frac{1}{2}}\int_{\ssyn}\tilde{c}(\zc)g((\zc-\mc)^\zz\Sigma^{-1}(\zc-\mc))\d\zc \\
    =&\det(\Sigma)^{-\frac{1}{2}}\sum_{i=1}^n\int_{\ssyn}c_i(z_i)g((\zc-\mc)^\zz\Sigma^{-1}(\zc-\mc))\d \zc.
\end{align*}
Especially if $\muc$ is the multivariate standard normal distribution on $\ssyn$, i.e.
\begin{equation*}
    g(t)\dy g_n(t)\dy(2\pi)^{-\frac{n}{2}}e^{-\frac{t}{2}},\quad\forall t\in\ssy,
\end{equation*}
then
\begin{equation}\label{eq:g_j}
    \vphi(\Wp_{\vphi}^d(\muc_0,\muc_1))=\sum_{i=1}^n (c_i*\tilde{g}_i)(m_i),
\end{equation}
where $\Sigma\dy(S_{ij})_{i,j=1}^n,\mc\dy(m_1,\dots,m_n),s_i\dy\sqrt{S_{ii}},\tilde{g}_i(t)\dy\sqrt{2\pi}s_i^{-1}e^{-t^2/(2s_i^2)}$ for all $i\in\{1,\dots,n\},t\in\ssy$.
To construct a metric $d$ on $\ssyn$ that satisfies the above conditions, we can start with metrics on $\ssy$. By~\cref{pro:generalized integral minkowski}, the following lemma can help us define metrics on the countable product of Polish spaces.
\begin{lemma}
    Let $n\in\zzs\cup\{+\infty\}$, and let $(\ml{X}_i,d_i)$ be a Polish metric space for each $i \leqslant n$ (with $i \in \zzs$). If $\vphi\in\MI$ and $\lambdac\dy(\lambda_1,\dots,\lambda_n)\in\Delta^n_+$, then
    \begin{align*}
        d_{\dc}^{\lambdac}(\xc,\yc)&\dy\vphi^{-1}\xkh{\sum_{i=1}^n\lambda_i\vphi(d_i(x_i,y_i))}, \\
        &
        \forall \xc\dy(x_1,\dots,x_n),\yc\dy(y_1,\dots,y_n)\in\prod_{i=1}^n\ml{X}_i,
    \end{align*}
    is a metric on $\prod_{i=1}^n\ml{X}_i$, where $\dc\dy(d_1,\dots,d_n)$.
\end{lemma}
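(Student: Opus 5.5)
The plan is to check the metric axioms for $d_{\dc}^{\lambdac}$ directly, with the triangle inequality obtained from the discrete generalized Minkowski inequality in \cref{pro:generalized minkowski}. For $n=+\infty$ the same inequality will be applied through its integral form, \cref{pro:generalized integral minkowski}, taking $\ml{X}=\zzs$ with the probability measure $\sum_i\lambda_i\delta_i$. In that case the sum $\sum_i\lambda_i\vphi(d_i(x_i,y_i))$ may be $+\infty$. I would therefore either allow the value $+\infty$ (an extended metric) or restrict to pairs for which the series converges; the lemma's phrase ``is a metric'' should be read in this sense, and I would add a short remark saying so. Since $\vphi^{-1}$ is an increasing bijection of $\ffss$, I set $\vphi^{-1}(+\infty)=+\infty$.

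\textbf{Basic axioms.} Symmetry follows from the symmetry of each $d_i$, and nonnegativity from $\vphi\gee0$. An increasing bijection of $\ffss$ satisfies $\vphi(0)=0$ and $\vphi(t)>0$ for $t>0$. Hence $d_{\dc}^{\lambdac}(\xc,\yc)=0$ holds if and only if $\sum_i\lambda_i\vphi(d_i(x_i,y_i))=0$. Because every $\lambda_i>0$ (here we use $\lambdac\in\Delta^n_+$), this forces $d_i(x_i,y_i)=0$ for all $i$, that is, $\xc=\yc$.

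\textbf{Triangle inequality.} Fix $\xc,\yc,\zc$. The triangle inequality for each $d_i$ gives $d_i(x_i,z_i)\lee a_i+b_i$, where $a_i\dy d_i(x_i,y_i)$ and $b_i\dy d_i(y_i,z_i)$. Since $\vphi$ and $\vphi^{-1}$ are increasing,
\begin{equation*}
d_{\dc}^{\lambdac}(\xc,\zc)\lee\vphi^{-1}\Bigl(\sum_i\lambda_i\vphi(a_i+b_i)\Bigr)\lee\vphi^{-1}\Bigl(\sum_i\lambda_i\vphi(a_i)\Bigr)+\vphi^{-1}\Bigl(\sum_i\lambda_i\vphi(b_i)\Bigr).
\end{equation*}
The last step is \cref{pro:generalized minkowski} when $n<\infty$. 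When $n=1$ it is trivial, because $\lambda_1=1$.

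\textbf{Countable case.} When $n=+\infty$ I apply \cref{pro:generalized integral minkowski} with $f_0(i)=a_i$ and $f_1(i)=b_i$. If either right-hand term is infinite, the inequality is trivial. Otherwise the lemma's proof goes through unchanged: the supporting affine majorant of the concave function $F_\vphi$ at the finite point $(u_0,u_1)$ is integrated against the discrete measure. The only delicate step is that concavity of $F_\vphi$ yields an affine majorant touching at $(u_0,u_1)$ even when that point lies on the boundary of $\ffss^2$. I would handle this by noting that \cref{pro:generalized integral minkowski} is already stated for arbitrary probability spaces and so can be cited directly. Alternatively, one can apply the finite inequality to truncations $i\lee N$ with renormalized weights and let $N\to\infty$ using continuity of $\vphi^{-1}$.

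The step I expect to be the main obstacle is this truncation route, because renormalizing the weights changes both sides. The fix I would use is to put the leftover mass $1-\sum_{i\lee N}\lambda_i$ on an extra index with $a=b=0$. This leaves every term unchanged and avoids renormalization altogether. Letting $N\to\infty$ then gives the inequality by monotone convergence inside $\vphi^{-1}$.
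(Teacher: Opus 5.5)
Your proposal is correct and follows the route the paper indicates (the paper gives no separate proof, only the pointer to \cref{pro:generalized integral minkowski}): the basic axioms follow from $\vphi(0)=0$, $\vphi>0$ on $(0,+\infty)$ and $\lambda_i>0$, and the triangle inequality is the generalized Minkowski inequality applied to $a_i=d_i(x_i,y_i)$ and $b_i=d_i(y_i,z_i)$. You are right that for $n=+\infty$ the series may diverge, so the statement only holds as an extended metric (or on classes of points at finite mutual distance), which the paper overlooks; your zero-padded truncation handles that case cleanly using only the finite inequality, and so avoids the boundary supporting-line issue you noticed.
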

\begin{remark}
    To align with the previous assumptions, we can set $d_i(x_i,y_i)\dy\eta_i|x_i-y_i|$ with $\eta_i>0$ for all $i\in\{1,\dots,n\}$.
\end{remark}

The conclusion derived above can be summarized in the following theorem.

\begin{theorem}\label{thm:W_phi general}
    Let $n\in\zzs,\etac\dy(\eta_1,\dots,\eta_n)\in\ssy^n_{>0}$, and $\lambdac\dy(\lambda_1,\dots,\lambda_n)\in\Delta^n_{+}$. Let $\vphi\in\MI\cap C^2(\ffss)$ satisfying $\int_0^{+\infty}\vphi(t)e^{-\varepsilon t^2}\d t<+\infty$ for all $\varepsilon>0$. Then
    \begin{align*}
        d_{\etac}^{\lambdac}(\xc,\yc)&\dy\vphi^{-1}\xkh{\sum_{j=1}^n\lambda_j\vphi(\eta_j|x_j-y_j|)}, \\
        &\forall \xc\dy(x_1,\dots,x_n),\yc\dy(y_1,\dots,y_n)\in\ssyn,
    \end{align*}
    is a metric on $\ssyn$.
    
    Moreover, let $\Sigma_0,\Sigma_1\in\zdz^n$ and $\mc_0,\mc_1\in\ssyn$. With the notation defined in~\eqref{eq:map can shu} and~\eqref{eq:g_j}, assume that $\lambda_j=\frac{1}{n},\eta_j=1$ for all $j\in\{1,\dots,n\}$ if $A\notin\zdjz$. Then $T$ is an optimal map from $\muc_0$ to $\muc_1$ with respect to $\vphi\circ d_{\etac}^{\lambdac}$, and
    \begin{equation*}
        \vphi(\Wp_{\vphi}^{\lambdac,\etac}(\muc_0,\muc_1))=\sum_{i=1}^n \lambda_i(\bar{\varphi}_i*\tilde{g}_i)(m_i),
    \end{equation*}
    where $\Wp_{\vphi}^{\lambdac,\etac}\dy\Wp_{\vphi}^{d_{\etac}^{\lambdac}}$, and $\bar{\varphi}_i(t)\dy\varphi(\eta_i|t|)$ for all $i\in\{1,\dots,n\},t\in\ssy$.
\end{theorem}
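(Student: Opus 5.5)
The proof has two parts: the metric property of $d_{\etac}^{\lambdac}$, and then optimality of $T$ together with the explicit cost formula.

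\textbf{Metric property.} I apply the product-metric lemma stated just before the theorem, with $\ml{X}_i=\ssy$ and $d_i(x_i,y_i)\dy\eta_i|x_i-y_i|$. Each $d_i$ is a metric on the Polish space $\ssy$ because $\eta_i>0$. Symmetry and nondegeneracy of $d_{\etac}^{\lambdac}$ are immediate, since $\vphi$ is an increasing bijection with $\vphi(0)=0$ and $\lambda_i>0$. The triangle inequality follows by applying \cref{pro:generalized minkowski} to $x_j=d_j(x_j,z_j)$ and $y_j=d_j(z_j,y_j)$, after using $d_j(x_j,y_j)\lee x_j+y_j$ and the monotonicity of $\vphi$ and $\vphi^{-1}$.

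\textbf{Optimality of $T$.} I will use \cref{thm:cost function}(ii) with the separable cost
\begin{equation*}
\tilde c(\zc)=\vphi\bigl(d_{\etac}^{\lambdac}(\zc,\bm{0})\bigr)=\sum_i\lambda_i\bar\vphi_i(z_i).
\end{equation*}
The key step is that each $\bar\vphi_i(t)=\vphi(\eta_i|t|)$ is convex on $\ssy$. On $\ffss$, convexity of $\vphi$ follows from concavity of $F_\vphi$: restricting $F_\vphi$ to the line $(x,x)$ gives $\vphi(2\vphi^{-1}(x))$ concave, so $\vphi^{-1}$ is concave, hence $\vphi$ is convex. Then $\bar\vphi_i$ is convex, being an increasing convex function of $|t|$. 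It is $C^2$ except possibly at $0$, which is harmless: the sufficiency argument (Step 4) only uses that $c_i'$ is nondecreasing, and this holds for one-sided derivatives. The integrability conditions are covered by taking $u\equiv v\equiv0$, since $\tilde c\gee0$.

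I then split into two cases. If $A\in\zdjz$, then $Q=I$ diagonalizes $A$, and \cref{thm:cost function}(ii) applies directly. If $A\notin\zdjz$, the hypothesis forces $\lambda_j=1/n$ and $\eta_j=1$, so $\tilde c(\zc)=\frac1n\sum_i\vphi(|z_i|)$. This cost is separable only in the canonical basis, while $A$ is a general SPD matrix. This is the step I expect to be the main obstacle. What I would try first is to note that in the Gaussian setting of \eqref{eq:g_j} only the marginal laws of $z_i$ enter, and to argue optimality by a rotation: pass to the eigenbasis $Q$ of $A$, rewrite the cost there, and apply Step 4 of \cref{thm:cost function}. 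This works cleanly only when $\vphi$ is quadratic, by part (iii). So I expect the statement in this case to rest on the symmetric choice of weights, and I would check carefully whether an extra argument, such as invariance of $\muc$ under sign changes and permutations, is needed here.

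\textbf{Cost formula.} Once optimality is known, $\vphi(\Wp_\vphi^{\lambdac,\etac}(\muc_0,\muc_1))=\int\tilde c((I-A)\xc-\bc)\d\muc_0(\xc)$. Set $\zc\dy(I-A)\xc-\bc$. Under $\muc_0=\ml{N}(\mc_0,\Sigma_0)$, the variable $\zc$ is Gaussian with mean $(I-A)\mc_0-\bc=\mc_0-\mc_1=\mc$ and covariance $\Sigma$, exactly as computed in the display before \eqref{eq:g_j}. By separability, the expectation becomes $\sum_i\lambda_i\md{E}[\bar\vphi_i(z_i)]$, where $z_i\sim\ml{N}(m_i,s_i^2)$. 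Using the symmetry of the Gaussian density, this expectation equals the convolution $(\bar\vphi_i*\tilde g_i)(m_i)$, with the normalization of $\tilde g_i$ as in \eqref{eq:g_j}. Finiteness of these integrals follows from the growth assumption $\int_0^{+\infty}\vphi(t)e^{-\varepsilon t^2}\d t<+\infty$. The degenerate case $s_i=0$ is interpreted as evaluation at $m_i$, that is, $\vphi(\eta_i|m_i|)$.
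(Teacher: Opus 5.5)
You flagged the case $A\notin\zdjz$ correctly, and it is a genuine gap that no extra argument can close: for non-quadratic $\vphi$ the claim is false there. With $\lambda_j=\frac1n$ and $\eta_j=1$, the cost $\tilde c(\zc)=\frac1n\sum_j\vphi(|z_j|)$ is permutation-symmetric. It is not rotation-invariant unless $\vphi(x)=\kappa x^2$, and the paper's own \cref{thm:cost function} turns this into a counterexample. Take $n=2$ and $\vphi(x)=x^4$, which lies in $\MI\cap C^2(\ffss)$ and has the required growth. Take $\lambda_1=\lambda_2=\frac12$ and $\eta_1=\eta_2=1$, so $\tilde c(\zc)=\frac12(z_1^4+z_2^4)$ is smooth and strictly convex. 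Let $\Sigma_0=I$, $\mc_0=\bm{0}$ and $\Sigma_1=A^2$ with
\[
A=\begin{pmatrix}5/2&-1/2\\-1/2&5/2\end{pmatrix}.
\]
This $A$ is SPD and non-diagonal, has distinct eigenvalues $2,3$ with eigenvectors $(1,\pm1)^\zz/\sqrt2$, and is exactly the matrix of \eqref{eq:map can shu}. As $\mc_1$ ranges over $\ssy^2$, so does $\bc=\mc_1$. If the theorem held, $T_{A,\bc}$ would therefore be optimal from $\ml{N}(\bm{0},I)$ for every $\bc$. Then \cref{thm:cost function}(i) would force $\tilde c$ to be separable in the eigenbasis $Q$ of $A$. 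But $\tilde c(Q\tc)=\frac14(t_1^4+6t_1^2t_2^2+t_2^4)$ has mixed derivative $6t_1t_2\neq0$. Equivalently, Step~2 of that proof would need $\nabla^2\tilde c(\zc)=\diag(6z_1^2,6z_2^2)$ to commute with $A$, i.e.\ $z_1^2=z_2^2$ a.e. So $T$ is not optimal, and the displayed formula, which is the cost of $T$, strictly overestimates $\vphi(\Wp_{\vphi}^{\lambdac,\etac}(\muc_0,\muc_1))$. Sign or permutation symmetries cannot rescue this; in any case $\ml{N}(\mc_0,\Sigma_0)$ has none for general $\Sigma_0$. The paper's own justification, the derivation just before the theorem, only invokes \cref{thm:cost function}(ii) with $Q=I$, so it too covers only $A\in\zdjz$. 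The correct statement must restrict the non-diagonal case to quadratic $\vphi(x)=\kappa x^2$ with equal $\lambda_j,\eta_j$. There $\vphi\circ d_{\etac}^{\lambdac}=\frac{\kappa}{n}\|\xc-\yc\|_2^2$ and $T$ is the classical Gaussian (Brenier) map.

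In the diagonal case your argument matches the paper's: metric property via \cref{pro:generalized minkowski}, optimality via \cref{thm:cost function}(ii) with $Q=I$, and the change of variables $\zc\sim\ml{N}(\mc,\Sigma)$ followed by separability. Three points need repair.
\begin{itemize}
\item Your convexity step is invalid. Concavity of $x\mapsto\vphi(2\vphi^{-1}(x))$ does not imply concavity of $\vphi^{-1}$: for $\vphi(x)=\sqrt{x}$ this map is $\sqrt2\,x$, yet $\vphi^{-1}(x)=x^2$ is convex. Instead, apply \cref{pro:generalized minkowski} with weights $(\frac12,\frac12)$, $x=(a,b)$, $y=(b,a)$. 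This gives $a+b\lee 2\vphi^{-1}\bigl(\frac{\vphi(a)+\vphi(b)}{2}\bigr)$, i.e.\ midpoint convexity, and a monotone midpoint-convex function is convex.
\item For $1\lee p<2$, $\bar\vphi_i$ is not $C^2$ at $0$ (for $p=1$ not even $C^1$), so \cref{thm:cost function}(ii) does not literally apply. The cleanest fix avoids it. For any coupling $\gammac$ of $\muc_0,\muc_1$, $\int\tilde c(\xc-\yc)\,\mathrm{d}\gammac=\sum_i\lambda_i\int\bar\vphi_i(x_i-y_i)\,\mathrm{d}\gammac_i$, where $\gammac_i$ couples the $i$-th marginals. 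Each term is minimized by the increasing map $x_i\mapsto a_ix_i+b_i$, since $\bar\vphi_i$ is convex. Because $A=\diag(a_1,\dots,a_n)$ acts coordinatewise, $T$ attains all these one-dimensional optima simultaneously.
\item The Gaussian kernel must be $\tilde g_i(t)=(\sqrt{2\pi}\,s_i)^{-1}e^{-t^2/(2s_i^2)}$. The constant printed in \eqref{eq:g_j} is a typo, so do not inherit that normalization.
\end{itemize}
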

\begin{remark}
    According to~\cref{rm:MI examples}, for any $p\gee1$, let $\vphi(x)\dy\vphi_p(x)\dy x^p$ for all $x\gee0$, then
    \begin{align*}
        d_{\etac}^{\lambdac}(\xc,\yc)=&\zkh{\sum_{j=1}^n\lambda_j\eta_j^p|x_j-y_j|^p}^{1/p}, \\
        \vphi_p(\Wp_{\vphi_p}^{\lambdac,\etac}(\muc_0,\muc_1))
        =&\frac{2^{\frac{p}{2}}\Gamma(\frac{p+1}{2})}{\sqrt{\pi}}\sum_{i=1}^n \lambda_i\eta_i^p s_i^p\kchf{-\frac{p}{2}}{\frac{1}{2}}{-\frac{m_i^2}{2s_i^2}},
    \end{align*}
    where $\leftindex_1F_1$ is the Kummer's confluent hypergeometric function introduced in~\cite{kummerIntegralibusQuibusdamDefinitis1837}.
\end{remark}
\begin{remark}
    For the selection of $\phi$ in~\cref{rm:MI examples}, we can also consider piecewise linear functions with non-decreasing slopes. For simplicity, for any $x_0>0$, let
    \begin{equation*}
        \phi(x)\dy\begin{cases}
            u,&u\in(0,x_0) \\
            +\infty,&u\in[x_0,+\infty)
        \end{cases},
    \end{equation*}
    then
    \begin{align*}
        \vphi(x)\dy \vphi_{2,x_0}(x)\dy&x_0\int_0^xe^{\int_{x_0/2}^t\frac{1}{\phi(u)}\d u}\d t \\
        =&\begin{cases}
            x^2,&x\in[0,x_0) \\
            2x_0x-x_0^2,&x\in[x_0,+\infty)
        \end{cases}\in\MI.
    \end{align*}
    Moreover, for each $i\in\{1,\dots,n\}$, let
    \begin{align*}
        &C_{1,i}\dy\erf(\frac{x_0-\eta_im_i}{\sqrt{2}\eta_is_i}),&&C_{2,i}\dy\erf(\frac{x_0+\eta_im_i}{\sqrt{2}\eta_is_i}),\\
        &C_{3,i}\dy e^{-\frac{(x_0-\eta_im_i)^2}{2\eta_i^2s_i^2}},&&C_{4,i}\dy e^{-\frac{(x_0+\eta_im_i)^2}{2\eta_i^2s_i^2}},
    \end{align*}
    where $\erf(x)\dy\frac{2}{\sqrt{\pi}}\int_0^xe^{-t^2}\d t$, then
    \begin{align*}
        &\vphi_{2,x_0}(\Wp_{\vphi_{2,x_0}}^{\lambdac,\etac}(\muc_0,\muc_1)) \\
        =&\sum_{i=1}^n \lambda_i\Bigg\{\frac{\eta_i^2(C_{1,i}+C_{2,i})}{2}(m_i^2+s_i^2)+\frac{\eta_i^2(C_{4,i}-C_{3,i})}{\sqrt{2\pi}}m_is_i \\
        &\qquad\quad\;
        +\eta_ix_0(C_{2,i}-C_{1,i})m_i
        +\frac{\eta_ix_0(C_{3,i}+C_{4,i})}{\sqrt{2\pi}}s_i+\frac{C_{1,i}+C_{2,i}-2}{2}x_0^2\Bigg\}.
    \end{align*}
    This closed-form expression will be beneficial for the practical applications discussed below.
\end{remark}

Then, for any $\mc \in \ssyn$, let $\deltac_{\mc}$ denote the Dirac distribution centered at $\mc$. Subsequently, we can compare $\MWp_{\vphi}^{\lambdac,\etac} \mathbin{\coloneqq} \MWp_{\ml{A},\varphi}^{d_{\etac}^{\lambdac}}$ with $\Wp_{\vphi}^{\lambdac,\etac}$, where $\ml{A} \mathbin{\coloneqq} \ml{DLS}(\muc_{g_n})$.

\begin{lemma}
    Let $J_0, J_1 \in \zzs$. For each $i\in\{0,1\}$ and $j_i\in\{1,\dots,J_i\}$, let $\muc_{i,j_i}\dy\ml{N}(\mc_{i,j_i},\Sigma_{i,j_i})$ with $\Sigma_{i,j_i}\in\zdz^n,\mc_{i,j_i}\in\ssyn$, and let $\nuc_i=\sum_{j_i=1}^{J_i} \lambda_{i,j_i}\muc_{i,j_i}$ with $\lambdac_i\coloneqq(\lambda_{i,1},\dots,\lambda_{i,J_i})\in\Delta^{J_i}$. Following the notation defined in~\eqref{eq:map can shu} and~\cref{thm:W_phi general}, we have
    \begin{align*}
        \Wp_{\vphi}^{\lambdac,\etac}(\nuc_0,\nuc_1)&\lee\MWp_{\vphi}^{\lambdac,\etac}(\nuc_0,\nuc_1) \\
        &\lee\Wp_{\vphi}^{\lambdac,\etac}(\nuc_0,\nuc_1)+2\sum_{i\in\{0,1\}}\vphi^{-1}(\sum_{j=1}^{J_i}\lambda_{i,j_i}\vphi(\Wp_{\vphi}^{\lambdac,\etac}(\muc_{i,j_i},\deltac_{\mc_{i,j_i}}))).
    \end{align*}
\end{lemma}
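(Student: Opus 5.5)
The lower bound follows directly from the definitions. By \cref{thm:two marginal equivalent}, $\MWp_{\vphi}^{\lambdac,\etac}(\nuc_0,\nuc_1)$ is $\vphi^{-1}$ of an infimum of $\int\vphi(d_{\etac}^{\lambdac})\d\gammac$ over the restricted set $\ml{M}\Pi(\ml{A})\cap\Pi(\nuc_0,\nuc_1)$. This set is contained in $\Pi(\nuc_0,\nuc_1)$, so the infimum can only be larger than the unrestricted one. Since $\vphi^{-1}$ is increasing, we get $\Wp_{\vphi}^{\lambdac,\etac}(\nuc_0,\nuc_1)\lee\MWp_{\vphi}^{\lambdac,\etac}(\nuc_0,\nuc_1)$.

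For the upper bound, I would pass through the discrete measures $\bar\nuc_i\dy\sum_{j_i}\lambda_{i,j_i}\deltac_{\mc_{i,j_i}}$. Since $\ml{M}(\ml{A}\cup\{\text{Diracs}\})$ is identifiable (Gaussian mixtures with degenerate covariance allowed), \cref{thm:mixture metric} gives the triangle inequality for $W_{\ml{A},\vphi}$, hence for $\MWp$:
\begin{equation*}
\MWp(\nuc_0,\nuc_1)\lee \MWp(\nuc_0,\bar\nuc_0)+\MWp(\bar\nuc_0,\bar\nuc_1)+\MWp(\bar\nuc_1,\nuc_1).
\end{equation*}
If one prefers not to extend the dictionary, the same inequality can be shown directly. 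Glue the component-wise plans through the index structure, exactly as in the proof of \cref{thm:mixture metric}, and then apply \cref{pro:generalized minkowski} twice.

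The outer terms are bounded by the diagonal coupling $w_{j,j'}=\lambda_{i,j}\mathds{1}_{j=j'}$. This gives $\MWp(\nuc_i,\bar\nuc_i)\lee\vphi^{-1}\big(\sum_{j}\lambda_{i,j}\vphi(\Wp_{\vphi}^{\lambdac,\etac}(\muc_{i,j},\deltac_{\mc_{i,j}}))\big)$.

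The middle term is a discrete OT cost between Dirac mixtures, so $\MWp(\bar\nuc_0,\bar\nuc_1)=\Wp_{\vphi}^{\lambdac,\etac}(\bar\nuc_0,\bar\nuc_1)$. Couplings of Dirac mixtures are automatically of mixture form, so the restricted and unrestricted problems coincide here. Then the triangle inequality for the genuine metric $\Wp_{\vphi}^{\lambdac,\etac}$ (\cref{pro:vphi wasserstein distance}) gives
\begin{equation*}
\Wp(\bar\nuc_0,\bar\nuc_1)\lee\Wp(\bar\nuc_0,\nuc_0)+\Wp(\nuc_0,\nuc_1)+\Wp(\nuc_1,\bar\nuc_1).
\end{equation*}
Each term $\Wp(\nuc_i,\bar\nuc_i)$ is in turn at most the same $\vphi^{-1}(\sum\lambda\vphi(\cdot))$ quantity. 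To see this, take the mixture coupling $\sum_j\lambda_{i,j}(\muc_{i,j}\otimes\deltac_{\mc_{i,j}})$, which is admissible in $\Pi(\nuc_i,\bar\nuc_i)$. Summing everything gives exactly the factor $2$ in the statement.

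The main obstacle is justifying the triangle inequality for $\MWp$ when one argument is the Dirac mixture $\bar\nuc_i$, which lies outside $\ml{DLS}(\muc_{g_n})$. I would handle it by the direct gluing argument rather than by invoking identifiability of an enlarged dictionary. Concretely, take an optimal component plan $w^{0}$ for $(\nuc_0,\nuc_1)$ and the identity plans to the Dirac mixtures. Compose them into the product-type plan with weights $w^{0}_{j_0,j_1}$ on triples $(\muc_{0,j_0},\deltac_{\mc_{0,j_0}},\ldots)$. Then bound the resulting sum using $d(\xc_0,\xc_1)\lee d(\xc_0,\mc_{0,j_0})+d(\mc_{0,j_0},\mc_{1,j_1})+d(\mc_{1,j_1},\xc_1)$, \cref{pro:generalized minkowski} and \cref{pro:generalized integral minkowski}.
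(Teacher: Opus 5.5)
Your plan follows the paper's proof. Write $\Wp$ for $\Wp_{\vphi}^{\lambdac,\etac}$, $e_{i,j}\dy\Wp(\muc_{i,j},\deltac_{\mc_{i,j}})$ and $E_i\dy\vphi^{-1}(\sum_{j}\lambda_{i,j}\vphi(e_{i,j}))$. Both arguments prove $\MWp(\nuc_0,\nuc_1)\lee E_0+\Wp(\bar{\nuc}_0,\bar{\nuc}_1)+E_1$ and $\Wp(\bar{\nuc}_0,\bar{\nuc}_1)\lee E_0+\Wp(\nuc_0,\nuc_1)+E_1$, then add them. Your lower bound is correct. So is your proof of the second inequality: the triangle inequality for $\Wp$, together with the admissible coupling $\sum_j\lambda_{i,j}\,\muc_{i,j}\otimes\deltac_{\mc_{i,j}}$, which gives $\Wp(\nuc_i,\bar{\nuc}_i)\lee E_i$. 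That is the direction actually needed. The paper's text writes $E_i\lee\Wp(\nuc_i,\tilde{\nuc}_i)$ there, which is reversed (a typo).

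The concrete gluing you say you would actually use picks the wrong plan. Take $w^0$ optimal for $(\nuc_0,\nuc_1)$. The atom-level bound $\Wp(\muc_{0,j_0},\muc_{1,j_1})\lee e_{0,j_0}+d_{\etac}^{\lambdac}(\mc_{0,j_0},\mc_{1,j_1})+e_{1,j_1}$ together with \cref{pro:generalized minkowski} then only yields $\MWp(\nuc_0,\nuc_1)\lee E_0+\vphi^{-1}(\sum_{j_0,j_1}w^0_{j_0,j_1}\vphi(d_{\etac}^{\lambdac}(\mc_{0,j_0},\mc_{1,j_1})))+E_1$. The middle term here is $\gee\Wp(\bar{\nuc}_0,\bar{\nuc}_1)$, so this bound is too weak to give what you need.

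Instead, take $w$ optimal for the Dirac problem $(\bar{\nuc}_0,\bar{\nuc}_1)$ and use it as a feasible component coupling in the discrete formula for $\MWp(\nuc_0,\nuc_1)$. The same two inequalities then give $\MWp(\nuc_0,\nuc_1)\lee E_0+\Wp(\bar{\nuc}_0,\bar{\nuc}_1)+E_1$ at once. This is exactly the paper's step. It never evaluates $\MWp$ at a Dirac mixture, so your ``main obstacle'' disappears: you need neither a triangle inequality for $\MWp$ with Dirac arguments nor identifiability of an enlarged dictionary.

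Your alternative route via \cref{thm:mixture metric} on $\ml{A}\cup\{\text{Diracs}\}$ would also work, but there you only assert identifiability of mixtures of Gaussians and Diracs. The claim is true, e.g.\ by linear independence of the characteristic functions of distinct, possibly degenerate, Gaussians. However, it lies outside the paper's identifiability assumption and would need its own proof.
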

\begin{proof}
    For each $i\in\{0,1\}$ and $j_i\in\{1,\dots,J_i\}$, let $\deltac_{i,j_i}\dy\deltac_{\mc_{i,j_i}}$ and $\tilde{\nuc}_{i}\dy\sum_{j_i=1}^{J_i} \lambda_{i,j_i}\deltac_{i,j_i}$.
    Since $\Wp_{\vphi}^{\lambdac,\etac}$ is a metric, by the triangle inequality,
    \begin{equation*}
        \Wp_{\vphi}^{\lambdac,\etac}(\nuc_0,\nuc_1)\gee\Wp_{\vphi}^{\lambdac,\etac}(\tilde{\nuc}_0,\tilde{\nuc}_1)-\Wp_{\vphi}^{\lambdac,\etac}(\nuc_0,\tilde{\nuc}_0)-\Wp_{\vphi}^{\lambdac,\etac}(\nuc_1,\tilde{\nuc}_1).
    \end{equation*}
    For each $i\in\{0,1\}$ and $j_i\in\{1,\dots,J_i\}$, let $e_{i,j_i}\coloneqq\Wp_{\vphi}^{\lambdac,\etac}(\muc_{i,j_i},\deltac_{i,j_i})$, and let $E_i\coloneqq\vphi^{-1}(\sum_{j=1}^{J_i}\lambda_{i,j_i}\vphi(e_{i,j_i}))$. Interpreting $E_i$ as the total cost of transporting each $\muc_{i,j_i}$ to $\deltac_{i,j_i}$, by~\cref{thm:two marginal equivalent}, it follows that $E_i\lee\Wp_{\vphi}^{\lambdac,\etac}(\nuc_i,\tilde{\nuc}_i)$ for each $i\in\{0,1\}$, so
    \begin{equation*}
        \Wp_{\vphi}^{\lambdac,\etac}(\nuc_0,\nuc_1)\gee\Wp_{\vphi}^{\lambdac,\etac}(\tilde{\nuc}_0,\tilde{\nuc}_1)-E_0-E_1.
    \end{equation*}
    For each $i\in\{0,1\}$ and $j_i\in\{1,\dots,J_i\}$, by the triangle inequality,
    \begin{equation*}
        \Wp_{\vphi}^{\lambdac,\etac}(\muc_{0,j_0},\muc_{1,j_1})\lee d_{\etac}^\lambdac(\mc_{0,j_0},\mc_{1,j_1})+e_{0,j_0}+e_{1,j_1}.
    \end{equation*}
    Let $\wc\coloneqq(w_{j_0,j_1})$ be an optimal discrete coupling for $\Wp_{\vphi}^{\lambdac,\etac}(\tilde{\nuc}_0,\tilde{\nuc}_1)$. By~\cref{pro:generalized minkowski},
    \begin{align*}
        \MWp_{\vphi}^{\lambdac,\etac}(\nuc_0,\nuc_1)&\lee\vphi^{-1}(\sum_{j_0,j_1}w_{j_0,j_1}\vphi(d_{\etac}^\lambdac(\mc_{0,j_0},\mc_{1,j_1})+e_{0,j_0}+e_{1,j_1})) \\
        &\lee\Wp_{\vphi}^{\lambdac,\etac}(\tilde{\nuc}_0,\tilde{\nuc}_1)+E_0+E_1.
    \end{align*}
    
    Finally, we conclude that
    \begin{equation*}
        \MWp_{\vphi}^{\lambdac,\etac}(\nuc_0,\nuc_1)
        \lee\Wp_{\vphi}^{\lambdac,\etac}(\nuc_0,\nuc_1)+2(E_0+E_1),
    \end{equation*}
    and $\Wp_{\vphi}^{\lambdac,\etac}(\nuc_0,\nuc_1)\lee\MWp_{\vphi}^{\lambdac,\etac}(\nuc_0,\nuc_1)$ by~\cref{thm:two marginal equivalent}.
\end{proof}

\begin{remark}
    For simplicity, we denote $\Wp^{\lambdac,\etac}_{\vphi_p},\MWp^{\lambdac,\etac}_{\vphi_p},\Wp^{\lambdac,\etac}_{\vphi_{2,x_0}}$, and $\MWp^{\lambdac,\etac}_{\vphi_{2,x_0}}$ as $\Wp^{\lambdac,\etac}_{p},\MWp^{\lambdac,\etac}_{p},\Wp^{\lambdac,\etac}_{2,x_0}$, and $\MWp^{\lambdac,\etac}_{2,x_0}$, respectively. Moreover,
    \begin{align*}
        &\Wp_{p}^{\lambdac,\etac}(\nuc_0,\nuc_1)\lee\MWp_{p}^{\lambdac,\etac}(\nuc_0,\nuc_1) \\
        &\quad
        \lee\Wp_{p}^{\lambdac,\etac}(\nuc_0,\nuc_1)+2^{\frac{3}{2}}\pi^{-\frac{1}{2p}}\Gamma(\frac{p+1}{2})^{\frac{1}{p}}\sum_{i\in\{0,1\}}\sum_{j=1}^{J_i}\lambda_{i,j_i}(\sum_{j=1}^n \lambda_j\eta_j^p s_{i,j_i,j}^p)^{\frac{1}{p}}, \\
        &\Wp_{2,x_0}^{\lambdac,\etac}(\nuc_0,\nuc_1)\lee\MWp_{2,x_0}^{\lambdac,\etac}(\nuc_0,\nuc_1)\lee\Wp_{2,x_0}^{\lambdac,\etac}(\nuc_0,\nuc_1)+ \\
        &\quad2\sum_{i\in\{0,1\}}\vphi_{2,x_0}^{-1}(\sum_{j=1}^{J_i}\lambda_{i,j_i}\sum_{j=1}^n \lambda_j[\eta_j^2\erf(\frac{x_0}{\sqrt{2}\eta_js_{i,j_i,j}})s_{i,j_i,j}^2+\sqrt{\frac{2}{\pi}}\eta_j x_0 s_{i,j_i,j}e^{-\frac{x_0^2}{2\eta_j^2s_{i,j_i,j}^2}}]),
    \end{align*}
    where $\Sigma_{i,j_i}\dy(S^{(i,j_i)}_{j\ell})_{j,\ell=1}^n,s_{i,j_i,j}\dy\sqrt{S^{(i,j_i)}_{jj}}$.
\end{remark}

\section{Domain Adaptation via Optimal Transport between Mixtures}\label{sec:Domain Adaptation via Optimal Transport between Mixtures}
We will present a domain adaptation framework based on the theory discussed in the previous sections and compare it with similar algorithms to demonstrate its effectiveness.

\subsection{Fitting mixture models}
In this work, we assume that the data can be effectively modeled using mixture models. For the sake of simplicity, we select the location-scale family $DLS(\muc_{g_n})$, which is actually the diagonal Gaussian mixture model. Other mixture models that satisfy~\cref{ass:Identifiability} can also be applied within the framework of this paper. Although they may involve more complex computations, these models are better suited for applications where relevant prior knowledge is available.

Let $\{\xc_i\}_{i=1}^{n_d}$ be a dataset with feature space $\ssyn$. A $DLS(\muc_{g_n})$-mixture with $J$ components has parameters $\{\lambda_j,\mc_j,\Sigma_j\}_{j=1}^J$, which can be estimated using maximum likelihood as
\begin{equation*}
    \{\lambda_j^\ast,\mc_j^\ast,\Sigma_j^\ast\}_{j=1}^J\dy\argmax_{\{\lambda_j,\mc_j,\Sigma_j\}_{j=1}^J}\sum_{i=1}^{n_d}\log P(\xc_i),
\end{equation*}
where $P\dy\sum_{j=1}^J\lambda_jP_j,P_j\dy p_{g_{\mc_j,\Sigma_j}}$. A practical method for optimizing the above equation, commonly known as the Expectation-Maximization (EM) algorithm, was proposed in~\cite{dempsterMaximumLikelihoodIncomplete1977}. In our method, we model the unlabeled domain using a single Gaussian mixture, whereas for the labeled domain, we fit a separate Gaussian mixture for each label and then integrate them. For completeness, we adopt the pseudocode from~\cite{dussonWassersteintypeMetricGeneric2026} for this approach in~\cref{alg:GMM for unlabel,alg:GMM for label}, where the function $\onehot$ converts labels to one-hot encodings.

\begin{algorithm}
\caption{Fitting a GMM for unlabeled dataset.}
\label{alg:GMM for unlabel}
\begin{algorithmic}
\Require{Unlabeled dataset $\Xc\dy\{\xc_i\}_{i=1}^{n_d}$ and the number of components $J$.}
\Ensure{The parameters of a single Gaussian mixture $\{(\lambda_j,\mc_j,\Sigma_j)\}_{j=1}^J$.}
\Procedure{EM}{$\Xc,J$}
\For{$it=1,\dots,n_{iter}$}
\State{$\pi_{i,j} \leftarrow \dfrac{\pi_{j}g_{\mc_j,\Sigma_j}(\xc_i)}{\sum_{j'}\pi_{j'}g_{\mc_{j'},\Sigma_{j'}}(\xc_i)}$;}
\Comment{Expectation Step}
\State{$\pi_{j} \leftarrow \sum_{i=1}^{n_d}\pi_{i,j}$;}
\State{$\begin{cases}
    \lambda_{j} &\!\!\!\!\leftarrow \dfrac{\pi_{k}}{n_d}; \\
    \mc_{j} &\!\!\!\!\leftarrow \dfrac{1}{\pi_{j}}\sum_{i=1}^{n_d}\pi_{i,j} \xc_i; \\
    \Sigma_{j} &\!\!\!\!\leftarrow \dfrac{1}{\pi_{j}}\sum_{i=1}^{n_d}\pi_{i,j} (\xc_{i} - \mc_{j})(\xc_{i} - \mc_{j})^\zz;
\end{cases}$}
\Comment{Maximization Step}
\EndFor
\Return{$\{(\lambda_j,\mc_j,\Sigma_j)\}_{j=1}^J$.}
\EndProcedure
\end{algorithmic}
\end{algorithm}

\begin{algorithm}
\caption{Fitting GMMs for labeled dataset.}
\label{alg:GMM for label}
\begin{algorithmic}
\Require{Labeled dataset $\hat{\Xc}\dy\{(\xc_i,y_i)\}_{i=1}^{n_d}$, the number of labels $L$ and the number of components $J$.}
\Ensure{The parameters of Gaussian mixtures $\{(\lambda_{\ell,j},\mc_{\ell,j},\Sigma_{\ell,j}, \yc_\ell)\}_{\substack{1\lee j\lee J_\ell \\
1\lee \ell\lee L}}$.}
\Procedure{ConditionalEM}{$\hat{\Xc},L,J$}
\State{$J_\ell \leftarrow \dfrac{J}{L}$;}
\Comment{Just assume that $L\mid J$}
\For{$\ell=1,\dots,L$}
\State{$\Xc_\ell \leftarrow \{\xc_i:y_i=\ell\}$;}
\Comment{Samples from $\ell$-th class}
\State{$\{\lambda_{\ell,j},\mc_{\ell,j},\Sigma_{\ell,j}\}_{j=1}^{J_\ell} \leftarrow \EM(\Xc_\ell,J_\ell)$;}
\Comment{EM on conditionals}
\State{$\yc_\ell \leftarrow \onehot(\ell)$;}
\Comment{Convert labels to one-hot encoding}
\State{$\lambda_{\ell,j} \leftarrow \lambda_{\ell,j}\cdot\dfrac{|\Xc_\ell|}{n_d}$;}
\Comment{Reset weights}
\EndFor
\Return{$\{(\lambda_{\ell,j},\mc_{\ell,j},\Sigma_{\ell,j}, \yc_\ell)\}_{\substack{1\lee j\lee J_\ell \\
1\lee \ell\lee L}}$.}
\Comment{Concatenates all parameters}
\EndProcedure
\end{algorithmic}
\end{algorithm}

\subsection{Optimal transport between Gaussian mixtures}
Referring to the ideas in~\cref{sec:Optimal transport for domain adaptation}, we first construct the optimal transport problem from the source domain $\gl^S$ to the target domain $\gl^T$. Based on the fitting in the previous section, we may assume that $\gl^S=\sum_{j_0=1}^{J_0}\lambda_{0,j_0}\muc_{0,j_0}$ and $\gl^T=\sum_{j_1=1}^{J_1}\lambda_{1,j_1}\muc_{1,j_1}$ with $\lambdac_i\dy(\lambda_1,\dots,\lambda_{J_0})\in\Delta^{J_0}$, $\muc_{i,j_i}\dy\ml{N}(\mc_{i,j_i},\Sigma_{i,j_i})$ for all $i\in\{0,1\},j_i\in\{1,\dots,J_i\}$.
According to~\cref{thm:two marginal equivalent}, we only need to solve a discrete optimal transport problem, whose cost matrix can be determined by~\cref{thm:W_phi general}. For any $\etac\dy(\eta_1,\dots,\eta_n)\in\ssy^n_{>0}$, $\lambdac\dy(\lambda_1,\dots,\lambda_n)\in\Delta^n_{+}$, and $\vphi\in\MI\cap C^2(\ffss)$, we define the cost matrix
\begin{equation*}
    C^{\vphi,\etac,\lambdac}\dy \zkh{C^{\vphi,\etac,\lambdac}_{j_0,j_1}}_{\substack{i\in\{0,1\}\\
    j_i\in\{1,\dots,J_i\}}}\dy\zkh{\vphi(\Wp_{\vphi}^{\lambdac,\etac}(\muc_{0,j_0},\muc_{1,j_1}))}_{\substack{i\in\{0,1\}\\
    j_i\in\{1,\dots,J_i\}}}.
\end{equation*}
The corresponding problem is defined by
\begin{equation*}
    \begin{aligned}
        \wc^\ast\dy\argmin_{\wc\dy(w_{j_0,j_1})_{\substack{
        1\lee j_0\lee J_0 \\
        1\lee j_1\lee J_1}}\in\Pi(\lambdac_0,\lambdac_1)}\sum_{j_0=1}^{J_0}\sum_{j_1=1}^{J_1}C^{\vphi,\etac,\lambdac}_{j_0,j_1}w_{j_0,j_1}.
    \end{aligned}
\end{equation*}
Let the total number of data points in the source and target domains be $N$, and define $J\dy\max\{J_0,J_1\}$. Compared to the optimal transport problem involving all data points, it is worth mentioning that the reduction in time complexity from $O(N^3 \log N)$ to $O(N\cdot J^3 \log J)$ and in space complexity from $O(N^2)$ to $O(N\cdot J^2)$ (where typically $J \ll N$) represents a transition from a computationally intensive task to a real-time operation. This advancement enables Wasserstein-based clustering and distribution alignment in large-scale machine learning systems.

For simplicity, in the subsequent numerical experiments, let $\vphi=\vphi_p$ with $p\gee1$,
and let $\lambda_i=\frac{1}{n},\eta_i=1$ for all $i\in\{1,\dots,n\}$.

\subsection{Estimation methods}
Based on the optimal matching solution $\wc^\ast$ obtained in the previous section, we aim to establish a classifier $h$ in the target domain or an effective mapping $T:\ssyn\to\ssyn$ that aligns $\gl^S$ with $\gl^T$. The literature posits four primary methodologies for this estimation, as outlined below.
\subsubsection{Maximum a posteriori classifier \texorpdfstring{$h_{MAP}$}{}}
In the GMM-based Optimal Transport (GMM-OT) framework, the matching matrix $\wc^\ast\in\ssy^{L\times L}$ is interpreted as a joint probability distribution over the latent component spaces of the source and target domains. Specifically, for the $j_0$-th source component and the $j_1$-th target component, the entry $w^\ast_{j_0,j_1}$ represents the joint probability $\Prob(J^S=j_0,J^T=j_1)$, where $J^S$ and $J^T$ are categorical random variables that mapping each data point to its latent generating component in the source and target GMMs, respectively.

Let $Y^S$ and $Y^T$ be the random variables that assign each data point to its label in the source and target domains, respectively, and let $X^T\sim\gl^T$. Similar to the covariate shift hypothesis~\cite{sugiyamaCovariateShiftAdaptation2007}, we assume that
\begin{align*}
    &\Prob(Y^T|J^S=j_0,J^T=j_1)=\Prob(Y^S|J^S=j_0) \\
   \implies&\Prob(Y^T|J^T=j_1)=\sum_{j_0=1}^{J_0}\Prob(Y^S|J^S=j_0)\Prob(J^S=j_0|J^T=j_1),
\end{align*}
for all $j_1\in\{1,\dots,J_1\}$.
Then, using label propagation~\cite{redkoOptimalTransportMultisource2019}, the estimated label of the $j_1$-th target component can be defined as follows:
\begin{equation*}
    \hat{\yc}^T_{j_1}\dy\frac{1}{\lambda_{1,j_1}}\sum_{j_0=1}^{J_0}w^\ast_{j_0,j_1}\yc^S_{j_0},
\end{equation*}
where $\yc^S_{j_0}$ is the label of the $j_0$-th source component. Next, following~\cite{montesumaOptimalTransportDomain2025}, we can use maximum a posteriori estimation to establish a classifier in the target domain:
\begin{align*}
    h_{MAP}\dy&\argmax_{y\in\{1,\dots,L\}}\Prob(Y^T=y|X^T=\xc^T) \\
    =&\argmax_{y\in\{1,\dots,L\}}\sum_{j_1=1}^{J_1}\Prob(Y^T=y|X^T=\xc^T,J^T=j_1)\Prob(J^T=j_1|X^T=\xc^T) \\
    =&\argmax_{y\in\{1,\dots,L\}}\sum_{j_1=1}^{J_1}\frac{\lambda_{1,j_1}g_{\mc_{1,j_1},\Sigma_{1,j_1}}(\xc^T)}{\sum_{j'_1=1}^{J_1}\lambda_{1,j'_1}g_{\mc_{1,j'_1},\Sigma_{1,j'_1}}(\xc^T)}\hat{\yc}^T_{j_1},
\end{align*}
where the last equality holds because classes and components are structurally congruent within the data points under the usual settings.
\subsubsection{Conditional mean mapping \texorpdfstring{$T_{mean}$}{}}
The mean mapping strategy, as discussed by~\cite[Section 6.3]{delonWassersteinTypeDistanceSpace2020}, is derived from the barycentric projection. For a given source sample $\xc^S$, the transported point is defined as the conditional expectation under the optimal transport plan $\gammac^\ast$:
\begin{equation*}
    T_{mean}(\xc^S)\dy\qw_{\xc^T\sim\gammac^\ast(\,\cdot\,|\xc^S)}[x^T].
\end{equation*}
While $T_{mean}$ provides a deterministic and continuous transformation, it frequently fails to maintain the global structural properties of the target distribution. Specifically, the mapping may lead to a "shrinkage" effect where the transported points converge toward the global mean, thus failing to accurately populate the distinct modes of $\gl^T$~\cite{delonWassersteinTypeDistanceSpace2020}.
\subsubsection{Stochastic mapping \texorpdfstring{$T_{rand}$}{}}
To circumvent the limitations of mean-based methods, $T_{rand}$ introduces a probabilistic assignment mechanism~\cite[Section 6.3]{delonWassersteinTypeDistanceSpace2020}. A source point $\xc^S$ is mapped to the $j_1$-th target component via the affine map $T_{j_0,j_1}$ with a probability $P_{j_0,j_1}(\xc^S)$ defined as
\begin{equation*}
    P_{j_0,j_1}(\xc^S)\dy \frac{w^\ast_{j_0,j_1}g_{\mc_{0,j_0},\Sigma_{0,j_0}}(\xc^S)}{\sum_{j_0'=1}^{J_0}\lambda_{0,j'_0}g_{\mc_{0,j_0'},\Sigma_{0,j_0'}}(\xc^S)}.
\end{equation*}
Theoretically, $T_{rand}$ ensures that the push-forward distribution ${T_{rand}}_{\#}\gl^S$ exactly recovers $\gl^T$. However, from a practical optimization perspective, the stochastic nature of the index sampling $(j_0,j_1)$ introduces significant variance and irregularity in the mapping, which may degrade the stability of subsequent classification models.
\subsubsection{Importance weighted mapping \texorpdfstring{$T_{weight}$}{}}
The $T_{weight}$ strategy, proposed by~\cite[Section 4.2]{montesumaOptimalTransportDomain2025}, regularizes the transport process by combining deterministic component identification with weighted mass distribution. The procedure is formalized as follows:
\begin{list}{\arabic{enumi}.}{\usecounter{enumi}
\leftmargin=2em
\labelwidth=2em
\labelsep=0.5em}
  \item Maximum a posterior selection: the label of the source component which most likely to have generated the observation $\xc^S$ is determined by
  \begin{equation*}
      \hat{j}_0\dy\argmax_{j_0\in\{1,\dots,J_0\}}\frac{\lambda_{0,j_0}g_{\mc_{0,j_0},\Sigma_{0,j_0}}(\xc^S)}{\sum_{j_0'=1}^{J_0}\lambda_{0,j'_0}g_{\mc_{0,j_0'},\Sigma_{0,j_0'}}(\xc^S)}.
  \end{equation*}
  \item Multicomponent Transport: the point is mapped to all the $j_1$-th target components satisfying $w^\ast_{\hat{j}_0,j_1}\gee\tau$, where $\tau$ is a sparsity threshold.
  \item Weight assignment: each generated target point $T_{\hat{j}_0,j_1}(\xc^S)$ is assigned an importance weight $w^\ast_{\hat{j}_0,j_1}$.
\end{list}
$T_{weight}$ yields a piece-wise affine transformation that inherits the geometric properties of the GMM components. By enforcing a form of group-sparsity (where samples from the same source cluster are transported collectively), this method preserves class-discriminative features.
\subsection{Experiments}
To evaluate the scalability and robustness of our proposed framework, we conduct experiments on the VisDA-C benchmark~\cite{pengVisDASynthetictoRealBenchmark2018}, a large-scale DA dataset consisting of $152,397$ source samples and $55,388$ target samples. Following previous research~\cite{elhamriHierarchicalOptimalTransport2022}, we first pre-train ResNet~\cite{heDeepResidualLearning2016} or ViT~\cite{dosovitskiyImageWorth16x162021} networks on the source domain, then perform shallow DA within the derived feature space. This benchmark presents a significant computational challenge for traditional empirical OT methods. Specifically, solving a discrete OT problem of this magnitude would require a linear program with approximately $8.4\times10^9$ variables. Moreover, implementing a barycentric mapping would necessitate storing a transport plan $\gammac$ with a comparable number of floating-point coefficients, resulting in a memory footprint of roughly $270$ GB\,---\,exceeding the capacity of standard computing environments.

Referencing~\cite{montesumaOptimalTransportDomain2025}, we compare our LSMM-OTDA strategies with other OT methods for DA, using the notation introduced therein. To facilitate a comparative analysis under these constraints, empirical OT baselines were executed on a reduced sub-sample of $n_S=n_T=15,000$. In contrast, the parametric nature of LSMM-OTDA allows processing the entire dataset, highlighting the intrinsic efficiency of adopting compact distributional representations. For feature extraction, we choose ResNet-50, ResNet-101, and ViT-B/16. The quantitative results are summarized in~\cref{tab:visda_results}.

\begin{table}[htbp]
    \centering
    \caption{Comparison of the performance of domain adaptation algorithms across various feature extractors pre-trained on source domain data. Accuracy (in \%) and the difference $\Delta$ relative to the source-only baseline are reported (Part of the data was adapted from~\cite{montesumaOptimalTransportDomain2025}).}
    \label{tab:visda_results}
    \vspace{0.5em}
    \small
    \setlength{\tabcolsep}{1.5pt}
    \setlength{\heavyrulewidth}{1.5pt}
    \begin{tabular}{l c c c c}
        \toprule
        \textbf{Algorithm} & \textbf{Parameter} & \textbf{ResNet 50} & \textbf{ResNet 101} & \textbf{ViT-B/16} \\
        \midrule
        Source-Only & - & 47.93 & 53.90 & 56.70 \\
        \midrule
        OTDA\textsubscript{EMD}         & - & 53.69 \posdelta{5.76}  & 57.42 \posdelta{3.52}  & 63.25 \posdelta{6.55} \\
        OTDA\textsubscript{Sinkhorn}    & - & 53.02 \posdelta{5.09}  & 10.54 \negdelta{43.36} & 66.75 \posdelta{10.05} \\
        OTDA\textsubscript{Affine}      & - & 7.46 \negdelta{40.47}  & 11.82 \negdelta{42.08} & 6.75 \negdelta{49.95} \\
        OTDA\textsubscript{Affine-Diag} & - & 51.41 \posdelta{3.48}  & 56.91 \posdelta{3.01}  & 59.94 \posdelta{3.24} \\
        HOTDA                           & - & 47.51 \negdelta{0.42}  & 47.64 \negdelta{6.26}  & 62.55 \posdelta{5.85} \\
        \midrule
        LSMM-OTDA\textsubscript{MAP}     & $p=2.0$ & 53.94 \posdelta{6.01}  & 49.48 \negdelta{4.42} & 66.76 \posdelta{10.06} \\
        LSMM-OTDA\textsubscript{weight}  & $p=2.0$ & 57.90 \posdelta{9.97} & 52.47 \negdelta{1.43}  & 70.19 \posdelta{13.49} \\
        \midrule
        LSMM-OTDA\textsubscript{MAP}     & $p=1.0$ &  52.83 \posdelta{4.90}  &  55.16 \posdelta{1.26} & \textbf{72.89} \posdelta{16.19} \\
        LSMM-OTDA\textsubscript{weight}  & $p=1.0$ &  \textbf{58.47} \posdelta{10.54}  &  \textbf{59.66} \posdelta{5.76} & 70.70 \posdelta{14.00} \\
        \bottomrule
    \end{tabular}
\end{table}

Notably, we observe a strong positive correlation between the training accuracy and testing accuracy of the LSMM-OTDA\textsubscript{weight} method (see~\cref{fig:Zheng Xiang Guan}). This finding suggests that training performance serves as a reliable proxy for evaluating model generalization. Consequently, the superior parameter $p$ can be determined a priori based on the training set, since higher training accuracy typically indicates better test performance.

\begin{figure}[htbp]
    \centering
    \begin{tikzpicture}
        \begin{axis}[
            width=0.9\textwidth,
            height=0.65\textwidth,
            xlabel={Final Training Accuracy (\%)},
            ylabel={Testing Accuracy (\%)},
            title={Correlation between Training and Testing Accuracy of LSMM-OTDA\textsubscript{weight}},
            grid=major,
            xmin=68.0, xmax=72.0,
            ymin=68.0, ymax=72.0,
            legend pos=north west,
            tick label style={font=\small},
            label style={font=\small}
        ]

            \pgfplotstableread{
                p       Last        acc
                2.2     68.517250   68.529284
                1.1     68.566668   68.578031
                1.2     68.882340   68.890373
                2.3     69.039786   69.052863
                2.5     69.379693   69.392287
                2.4     69.391614   69.404925
                2.6     69.469905   69.482559
                2.9     69.658256   69.670326
                3.0     69.825345   69.838232
                2.8     70.131367   70.145158
                2.0     70.175773   70.190294
                2.7     70.189416   70.201127
                2.1     70.511049   70.527912
                1.3     70.602208   70.612768
                1.4     70.668566   70.677764
                1.0     70.695949   70.703040
                1.6     70.913243   70.923305
                1.7     71.031231   71.040659
                1.5     71.133369   71.143569
                1.9     71.316928   71.322308
                1.8     71.313792   71.324114
            }\datatable

            \addplot[
                blue, 
                domain=68.2:71.5, 
                samples=2, 
                thick,
                dashed
            ] {0.998932*x + 0.086118}; 
            \addlegendentry{Trend Line ($R^2 > 0.9999$)}

            \addplot[
                only marks,
                mark=*,
                mark size=1.8pt,
                red!70!black,
            ] table [x=Last, y=acc] {\datatable};
            \addlegendentry{Experimental Results}

            \node[coordinate, pin={[pin distance=0.25cm, pin edge={black!60, thin}] 135:{\scriptsize $p=2.2$}}] at (axis cs:68.517250,68.529284) {};
            \node[coordinate, pin={[pin distance=0.25cm, pin edge={black!60, thin}] 315:{\scriptsize $p=2.3$}}] at (axis cs:69.039786,69.052863) {};
            \node[coordinate, pin={[pin distance=0.25cm, pin edge={black!60, thin}] 135:{\scriptsize $p=2.5$}}] at (axis cs:69.379693,69.392287) {};
            \node[coordinate, pin={[pin distance=0.25cm, pin edge={black!60, thin}] 315:{\scriptsize $p=3.0$}}] at (axis cs:69.825345,69.838232) {};
            \node[coordinate, pin={[pin distance=0.25cm, pin edge={black!60, thin}] 135:{\scriptsize $p=2$}}] at (axis cs:70.175773,70.190294) {};
            \node[coordinate, pin={[pin distance=0.25cm, pin edge={black!60, thin}] 315:{\scriptsize $p=1.0$}}] at (axis cs:70.695949,70.703040) {};
            \node[coordinate, pin={[pin distance=0.25cm, pin edge={black!60, thin}] 135:{\scriptsize $p=1.8$}}] at (axis cs:71.313792,71.324114) {};

        \end{axis}
    \end{tikzpicture}
    \caption{Correlation analysis between the final training accuracy and testing accuracy under the pre-trained ViT-B/16 weights. The dashed blue line serves as a reference indicating the strong linear consistency between training and testing performance.}
    \label{fig:Zheng Xiang Guan}
\end{figure}

\section{Conclusion}

In this paper, we establish a general mathematical framework for optimal transport (OT) between identifiable finite location-scale mixture models. Our theoretical contributions are threefold. First, by defining a specific function class grounded in generalized Minkowski-type inequalities, we extend traditional Wasserstein-type metrics and barycenters to these mixture models. Second, we provide a characterization of translation-invariant cost functions that admit affine OT maps. Specifically, we prove that the optimality of symmetric positive definite affine maps requires the cost function to be separable, and that the presence of any cross-coupling further reduces the cost to a quadratic form. Third, we prove that restricting joint couplings to specific mixture structures reduces the continuous multimarginal OT problem to a tractable discrete formulation over mixture components, accompanied by bounding inequalities against their original continuous counterparts.

From a computational perspective, these theoretical foundations translate into scalable algorithms for machine learning applications. By shifting the transport paradigm from pointwise empirical matching to component-level matching, we propose an efficient domain adaptation framework instantiated via Gaussian mixture models. This approach decouples the core OT computation from individual data points, reducing the computational complexity to scale linearly with the sample size. Empirical evaluations on the VisDA-C benchmark demonstrate that our method achieves significant reductions in both computational overhead and memory footprint, while maintaining competitive classification accuracy compared to existing empirical OT approaches.

Looking forward, this generalized framework opens several avenues for future research. First, while our current theoretical analysis relies on the identifiability of mixture models, extending these generalized metrics to non-identifiable or more complex distributions remains an open mathematical challenge. Second, although our algorithmic design and empirical evaluations focus on the foundational two-marginal domain adaptation setting, a natural progression is to develop and evaluate practical algorithms for multi-source domain adaptation, thereby leveraging the multimarginal barycenter theory established herein. Finally, integrating this efficient transport mechanism into the training objectives of deep generative models may offer a principled approach to refining latent space alignment in large-scale representation learning.

\section*{Acknowledgments}
This research is supported by National Key R\&D Program of China (2024YFA1012401), the Science and Technology Commission of Shanghai Municipality (23JC1400501), and Natural Science Foundation of China (12241103).

%\let\url\bibcustomurl
%\bibliographystyle{siamplain}
%\bibliography{202606}

\end{document}